\DeclareMathSymbol{\mlq}{\mathord}{operators}{``}
\DeclareMathSymbol{\mrq}{\mathord}{operators}{`'}
\title{Derivator Six-Functor-Formalisms --- Definition and Construction I}
\date{June 7, 2021}
\author{Fritz H\"ormann\\ Mathematisches Institut, Albert-Ludwigs-Universit\"at Freiburg}
\newcommand*\numcirc[1]{\tikz[baseline=(char.base)]{
            \node[shape=circle,draw,inner sep=2pt] (char) {#1};}}
\newtheorem{SATZ}{Theorem}[section]
\newtheorem{HAUPTSATZ}[SATZ]{Main Theorem}
\newtheorem{LEMMA}[SATZ]{Lemma}
\newtheorem{DEF}[SATZ]{Definition}
\newtheorem{PROP}[SATZ]{Proposition}
\newtheorem{BEISPIEL}[SATZ]{Example}
\newtheorem{KOR}[SATZ]{Corollary}
\newtheorem{BEM}[SATZ]{Remark}
\newtheoremstyle{bare}        
  {}            
  {}            
  {\normalfont}                 
  {}                            
  {\bfseries}                   
  {}                            
  {.0em}                           
  {\thmnumber{#2}#1{\thmnote{ \normalfont(#3)}}. } 
\theoremstyle{bare}
\newtheorem{PAR}[SATZ]{}
\newcommand{\comment}[1]{}
\newcommand{\commentempty}[1]{}
\newcommand{\iso}{\stackrel{\sim}{\longrightarrow}}
\newcommand{\Iiso}{\stackrel{\sim}{\Longrightarrow}}
\newcommand{\DD}{ \mathbb{D} }
\newcommand{\EE}{ \mathbb{E} }
\newcommand{\SSS}{ \mathbb{S} }
\DeclareMathOperator{\id}{id}
\DeclareMathOperator{\op}{op}
\DeclareMathOperator{\cart}{cart}
\DeclareMathOperator{\cocart}{cocart}
\DeclareMathOperator{\Hom}{Hom}
\DeclareMathOperator{\Fun}{Fun}
\DeclareMathOperator{\Span}{Cor}
\DeclareMathOperator{\Mor}{Mor}
\DeclareMathOperator{\spec}{spec}
\DeclareMathOperator{\pr}{pr}
\DeclareMathOperator{\cor}{cor}
\DeclareMathOperator{\lax}{lax}
\DeclareMathOperator{\oplax}{oplax}
\DeclareMathOperator{\Dia}{Dia}
\DeclareMathOperator{\Posf}{Posf}
\DeclareMathOperator{\dia}{dia}
\newcommand{\tw}[1]{ {{}^{\downarrow \uparrow} #1 }} 
\newcommand{\twc}[1]{ {{}^{\downarrow \uparrow \downarrow} #1 }}
\begin{document}

\maketitle

{\footnotesize  {\em 2010 Mathematics Subject Classification:} 55U35, 14F05, 18D10, 18D30, 18E30, 18G99  }

{\footnotesize  {\em Keywords:} fibered multiderivators, (op)fibered 2-multicategories, six-functor-formalisms, Grothendieck contexts }

\section*{Abstract}

A theory of a {\em derivator version} of six-functor-formalisms is developed, using an extension of the notion of fibered multiderivator due to the author. 
The definition is very compact using the language of (op)fibrations of 2-multicategories. It not only encodes
all compatibilities among the six functors but also their interplay with homotopy Kan extensions. One could say: a nine-functor-formalism. This is essential, for instance, to deal with (co)descent questions of six-functor-formalisms. Finally, it is shown that every fibered multiderivator (for example encoding any kind of derived four-functor formalism $(f_*, f^*, \otimes, \mathcal{HOM})$ occurring in nature) satisfying base-change and projection formula
{\em formally} gives rise to such a derivator six-functor-formalism in which ``$f_!=f_*$'', i.e.\@ a derivator Grothendieck context. 

\tableofcontents

\section{Introduction}

A formalism of the ``six functors'' lies at the core of many different theories in mathematics, as for example the theory of Abelian sheaves on topological spaces, etale, $l$-adic, or coherent sheaves on schemes, D-modules, representations of (pro\nobreakdash-)finite groups, motives, and many more.
Given a base category of ``spaces'' $\mathcal{S}$, for instance, the category of schemes, topological spaces, analytic manifolds, etc. such 
a formalism roughly consists of a collection of (derived) categories $\mathcal{D}_S$ of ``sheaves'', one for each ``base space'' $S$ in $\mathcal{S}$, and
 the following six types of functors between those categories:
\vspace{0.5cm}
\[
\begin{array}{lcrp{1cm}l}
f^* & &f_* &  & \text{\em for each $f$ in $\Mor(\mathcal{S})$}  \\
\\
f_! & &f^! & & \text{\em for each $f$ in $\Mor(\mathcal{S})$} \\
\\
\otimes & & \mathcal{HOM} & & \text{\em in each fiber $\mathcal{D}_S$}
\end{array}
\]
\vspace{0.3cm}

The functors on the left hand side are left adjoints of the functors on the right hand side. The functor $f_!$ is called {\bf push-forward with proper support}, because in the topological setting (Abelian sheaves over topological spaces) this is what it is derived from. Its right adjoint $f^!$ is called the {\bf exceptional pull-back}. These functors come along with a bunch of compatibilities between them. 

\begin{PAR}\label{COMP6FU}
More precisely, part of the datum of the six functors are the following natural isomorphisms in the ``left adjoints'' column:
\vspace{0.3cm}
\begin{center}
\begin{tabular}{r|lll}
& isomorphisms  & isomorphisms  \\
& between left adjoints & between right adjoints \\
\hline
$(*,*)$ & $(fg)^* \iso g^* f^*$ & $(fg)_* \iso f_* g_*$ &\\
$(!,!)$ & $(fg)_! \iso f_! g_!$ & $(fg)^! \iso g^! f^!$ &\\ 
$(!,*)$ 
& $g^* f_! \iso F_! G^*$ & $G_* F^! \iso f^! g_*$ & \\
$(\otimes,*)$ & $f^*(- \otimes -) \iso f^*- \otimes f^* -$ & $f_* \mathcal{HOM}(f^*-, -) \iso \mathcal{HOM}(-, f_*-)$  & \\
$(\otimes,!)$ & $f_!(- \otimes f^* -) \iso  (f_! -) \otimes -$ & $f_* \mathcal{HOM}(-, f^!-) \iso \mathcal{HOM}(f_! -, -)$ & \\ 
& & $f^!\mathcal{HOM}(-, -) \iso \mathcal{HOM}(f^* -, f^!-)$ & \\
$(\otimes, \otimes)$ &  $(- \otimes -) \otimes - \iso - \otimes (- \otimes -)$ &  $\mathcal{HOM}(- \otimes -, -) \iso \mathcal{HOM}(-, \mathcal{HOM}(-, -))$ & 
\end{tabular}
\end{center}
\vspace{0.3cm}
\end{PAR}

Here $f, g, F, G$ are morphisms in $\mathcal{S}$, which in the $(!,*)$-row, are related by a {\em Cartesian} diagram:
\[ \xymatrix{ \cdot \ar[r]^G \ar[d]_F  & \cdot \ar[d]^f \\ \cdot \ar[r]_g & \cdot } \]

In the right hand side column the corresponding adjoint natural transformations have been inserted. In each case the left hand side natural isomorphism uniquely determines the right hand side one and vice versa. (In the $(\otimes,!)$-case there are two versions of the 
commutation between the right adjoints; in this case any of the three isomorphisms determines the other two).
The $(!,*)$-isomorphism (between left adjoints) is called {\bf base change}, the $(\otimes, !)$-isomorphism is called the {\bf projection formula}, and the $(*, \otimes)$-isomorphism is usually part of the definition of a {\bf monoidal functor}. The $(\otimes, \otimes)$-isomorphism is the associativity of the tensor product and part of the definition of a monoidal category. The $(*,*)$-isomorphism, and the $(!,!)$-isomorphism, express that the corresponding functors arrange as a pseudo-functor with values in categories. 

Furthermore part of the datum are isomorphisms
\[ f^* \iso f^! \]
for all isomorphisms $f$\footnote{There are more general formalisms, which we call proper or etale six-functor-formalisms where there is a 
morphism $f^* \rightarrow f^!$ or a morphism $f_! \rightarrow f_*$ for {\em certain} morphisms $f$ (cf.\@ \cite[Section~8]{Hor15b})}.
Of course, there have to be compatibilities among those natural isomorphisms, and it is not easy to give a complete list of them.
In \cite{Hor15b} we explained how to proceed in a more abstract way (like in the ideas of fibered category or multicategory) and get a {\em precise} definition of a {\bf six-functor-formalism} without having to specify any of these compatibilities explicitly. The natural isomorphisms of \ref{COMP6FU} are derived from a composition law in a 2-multicategory $\mathcal{S}^{\cor}$ and all compatibilities are a consequence of the associativity of this composition law. 

Six-functor-formalisms were first introduced by Grothendieck, Verdier and Deligne \cite{Ver77, SGAIV1, SGAIV2, SGAIV3} and there 
has been continuously increasing interest in them in various contexts in the last decade \cite{FHM, Ayo07I, Ayo07II, LO08I, LO08II, CD09, LH09, ZL14, Zh10, Sch15, GR16, Dre18, DG20}.

The six functors are the right framework to study duality theorems like Serre duality, Poincar\'e duality, various (Tate) dualities for the (co)homology of groups, etc.
\begin{BEISPIEL}[Serre duality] Let $k$ be a field. If $\mathcal{S}$ is the category of $k$-varieties, we have a six-functor-formalism in which $\mathcal{D}_S$ is the derived category of (quasi-)coherent sheaves\footnote{Neglecting here for a moment the fact that $f_!$ exists in general only after passing to pro-coherent sheaves.} on $S$.
Let $\pi: S \rightarrow \spec(k)$ be a proper and smooth $k$-scheme of dimension $n$. 
Consider a locally free sheaf $\mathcal{E}$ on $S$ and
consider the following isomorphism (one of the two adjoints of the projection formula):
\[ \pi_* \mathcal{HOM}(\mathcal{E}, \pi^! k) \iso \mathcal{HOM}(\pi_! \mathcal{E}, k) \]
In this case, we have $\pi_! \mathcal{E} = \pi_* \mathcal{E}$ because $\pi$ is proper, and $\pi^! k = \Omega_S^n[n]$. Taking $i$-th homology of complexes we obtain 
\[ H^{i+n}(S, \mathcal{E}^\vee \otimes \Omega_S^n) \cong H^{-i}(S, \mathcal{E})^*.  \]
This is the classical formula of Serre duality. 
\end{BEISPIEL}

\begin{BEISPIEL}[Poincar\'e duality]\label{EXPOINCAREINTRO}
Let $k$ be a field. If $\mathcal{S}$ is a category of nice topological spaces, we have a six-functor-formalism in which $\mathcal{D}_S$ is the derived category of sheaves of $k$-vector spaces on $S$.
Let $X$ be an $n$-dimensional topological manifold. Consider a local system $\mathcal{E}$ of $k$-vector spaces on $X$ and
consider the isomorphism (again one of the two adjoints of the projection formula):
\[ \pi_* \mathcal{HOM}(\mathcal{E}, \pi^! k) \iso \mathcal{HOM}(\pi_! \mathcal{E},  k) \]

We have $\pi^! k = \mathcal{L}_{or}[n]$, where $\mathcal{L}_{or}$ is the orientation sheaf  of $X$ over $k$. 
Taking $i$-th homology of complexes we arrive at 
\[ H^{i+n}(X, \mathcal{E}^\vee \otimes \mathcal{L}_{or}) \cong H^{-i}_c(X, \mathcal{E})^*.  \]
This is the classical formula of Poincar\'e duality. 
\end{BEISPIEL}

\begin{BEISPIEL}[Group (co)homology] The six-functor-formalism of Example~\ref{EXPOINCAREINTRO} extends to stacks. 
Let $G$ be a group and consider the classifying stack $[\cdot/G]$ and the projection $\pi: [\cdot/G] \rightarrow \cdot$. Note: Abelian sheaves on $[\cdot/G]$ = $G$-representations in Abelian groups. The extension of the six-functor-formalism encodes duality theorems like Tate duality. In this case $\pi_*$ yields group cohomology and $\pi_!$ yields group homology.  If $G$ is finite, we also have a natural morphism $\pi_! \rightarrow \pi_*$ whose cone (homotopy cokernel) is Tate cohomology.  
\end{BEISPIEL}

We now recall the precise definition of six-functor-formalisms from \cite{Hor15b}. 
Let $\mathcal{S}$ be a (base) category with fiber products as above. 
The symmetric 2-multicategory $\mathcal{S}^{\cor}$ has the same objects as $\mathcal{S}$, the 1-multimorphisms
$S_1, \dots, S_n \rightarrow T$ are the multicorrespondences
\begin{equation}\label{excor} \vcenter{ \xymatrix{
&&&  A \ar[rd]^f \ar[ld]^{g_n} \ar[llld]_{g_1} \\
S_1 & \cdots & S_n  & ; &  T
} } 
\end{equation}
and the 2-morphisms are isomorphisms between these multicorrespondences\footnote{Later more general definitions of 2-multicategories $\mathcal{S}^{\cor,0}$, and $\mathcal{S}^{\cor,G}$, where 2-morphisms can be more general morphisms between multicorrespondences will become important.}. The composition is given by forming fiber products. 
In \cite{Hor15b}, we explained the following formal definition of a six-functor-formalism.

\vspace{0.2cm}
{\bf Definition. }{\em 
A symmetric six-functor-formalism on $\mathcal{S}$ is a 1-bifibration and 2-bifibration of symmetric 2-multicategories with 1-categorical fibers}
\[ p: \mathcal{D} \rightarrow \mathcal{S}^{\cor}. \]
Such a fibration can also be seen as a pseudo-functor of 2-multicategories
\[ \mathcal{S}^{\cor} \rightarrow \mathcal{CAT} \]
with the property that all multivalued functors in the image have right adjoints w.r.t.\@ all slots. 
Note that $\mathcal{CAT}$, the ``category''\footnote{having, of course, a {\em higher class} of objects.} of categories, has naturally the structure of a 2-``multicategory'' where the 1-multimorphisms are functors of several variables. 

This pseudo-functor maps the correspondence (\ref{excor}) to a functor isomorphic to 
\[ f_! ((g_1^* -) \otimes_A \cdots \otimes_A (g_n^* -)) \]
where $\otimes_A$, $f_!$, and $g^*_i$, are the images of  the following correspondences
\[
 \vcenter{ \xymatrix{
&&&  A \ar@{=}[rd] \ar@{=}[ld] \ar@{=}[llld] \\
A &  & A  & &  A
} } \quad
 \vcenter{ \xymatrix{
& A \ar[rd]^f \ar@{=}[ld]  \\
A   & &  T
} } \quad
 \vcenter{ \xymatrix{
&  A \ar@{=}[rd] \ar[ld]_{g_i}  \\
S_i  & &  A
} }
\]
As was explained in \cite{Hor15b}, the definition of six-functor-formalisms using $\mathcal{S}^{\cor}$ has the advantage that all the 6 types of isomorphisms between those functors
as in \ref{COMP6FU} and all
compatibilities between those isomorphisms (akward to write down a complete list) are already encoded in this simple definition. 

\subsection*{Definition of derivator six-functor-formalisms}

In most cases occurring in nature, the values of the six-functor-formalism, i.e.\@ the fibers of the fibration $\mathcal{D} \rightarrow \mathcal{S}^{\cor}$,
are {\em derived categories}. It is therefore natural to seek to enhance the situation to a {\em fibered multiderivator}. We will not give a detailled account on fibered multiderivators here. In the stable case they enhance fibrations of triangulated monoidal categories in a similar way that a usual derivator enhances triangulated categories. See \cite{Hor15} for a detailed introduction. 

Enhancements of this sort are essential to deal with (co)descent questions. The notion of a fibered multiderivator given in \cite{Hor15}, however, is not sufficient because $\mathcal{S}^{\cor}$ is a 2-multicategory (as opposed to a usual multicategory). Although the 2-multicategory $\mathcal{S}^{\cor}$ gives rise to a usual (not-represented) pre-multiderivator, by identifying 2-isomorphic 1-morphisms in the diagram categories $\mathcal{S}^{\cor}(I)$, a fibered multiderivator over {\em that} pre-multiderivator
 would not encode what we want\footnote{E.g.\@ the push-forward along a correspondence of the form $\{\cdot\} \leftarrow X \rightarrow \{\cdot\}$ should be something like the cohomology with compact support of $X$ with constant coefficients. Identifying 2-isomorphic 1-morphisms in $\mathcal{S}^{\cor}$ would force this to become the invariant part (in a derived sense) under automorphisms of $X$.}. It turns out that
the theory of fibered multiderivators over pre-multiderivators has a straightforward extension to pre-2-multiderivators in which the knowledge of the 2-morphisms of the base is preserved. 

In the first half of this article, we thus develop the theory of fibered multiderivators over pre-2-multiderivators. This allows to consider the symmetric pre-2-multiderivator $\SSS^{\cor}$ represented by the symmetric 2-multicategory $\mathcal{S}^{\cor}$ and to define:

\vspace{0.2cm}
{\bf Definition \ref{DEF6FUDER}. }{\em 
A {\bf (symmetric) derivator six-functor-formalism} is a left and right fibered (symmetric) multiderivator
\[ \DD \rightarrow \SSS^{\cor}. \]}%
It becomes important to have notions of (symmetric) {\em (op)lax}\, fibered multiderivators as well. Those are useful to enhance to derivators the definition of a
proper or etale six-functor-formalism which arises, for instance, whenever for some class of morphisms one has isomorphisms $f^! \cong f^*$ or $f_! \cong f_*$ which are part of the formalism. If this is the case for all morphisms, one speaks respectively of a Wirthm\"uller, or Grothendieck context. 

The second half of the article is devoted to the {\em construction} of derivator six-functor-formalisms. For this we concentrate on the case in which $f_!=f_*$ for all morphisms $f$ in $\mathcal{S}$, i.e.\@ to Grothendieck contexts. The case in which $f_! \not= f_*$ is much more involved and will be discussed in a subsequent article \cite{Hor17}.
In the classical case this construction is almost tautological: 
\begin{enumerate}
\item One starts with a four-functor-formalism ($f_*, f^*, \otimes, \mathcal{HOM}$) encoded by a bifibration of usual (symmetric) multicategories $\mathcal{D} \rightarrow \mathcal{S}^{\op}$ (where $\mathcal{S}^{\op}$ becomes a multicategory via the product), or equivalently by a pseudo-functor of (2-)multicategories $\mathcal{S}^{\op} \rightarrow \mathcal{CAT}$. Then one simply defines  a pseudo-functor
$\mathcal{S}^{\cor} \rightarrow \mathcal{CAT}$ by mapping a multicorrespondence (\ref{excor}) to the functor
\[ f_* ((g_1^* -) \otimes \cdots \otimes (g_n^* -)). \]
It is straightforward (but slightly tedious) to check that this defines a pseudo-functor if and only if base-change and projection formula hold \cite[Proposition~3.13]{Hor15b}. 
\item In the derived world, using theorems on Brown representability, one gets formally that the $f_*$ functors have right adjoints $f^!$ (provided that $f_*$ commutes with infinite coproducts as well), hence the 1-opfibration (and 2-bifibration) with 1-categorical fibers $\mathcal{E} \rightarrow \mathcal{S}^{\cor}$, which corresponds to the pseudo-functor in 1.\@, is also a 1-fibration.
\end{enumerate}
 
It is surprising, however, that constructions 1.\@ and 2.\@ are also possible in the world of fibered multiderivators, although they become more involved. The first is however still completely formal. It turns out that one can relax the condition that $\mathcal{S}^{\op}$ is a multicategory coming from a usual category $\mathcal{S}$ via the categorical product in $\mathcal{S}$. One can start with any opmulticategory $\mathcal{S}$. The definition of $\mathcal{S}^{\cor}$ generalizes readily to this situation. 

\subsection*{Construction of derivator six-functor-formalisms}

Let $\mathcal{S}$ be an opmulticategory with multipullbacks. As before, $\mathcal{S}$ will mostly be a usual category with the 
structure of opmulticategory given by the product, i.e. 
\begin{equation}\label{opmulti}
\Hom_{\mathcal{S}}(Y;  X_1, \dots, X_n) = \Hom_{\mathcal{S}}(Y, X_1) \times \cdots \times \Hom_{\mathcal{S}}(Y, X_n).  
\end{equation}
This structure is canonically symmetric.
However, for the sequel $\mathcal{S}$ may be arbitrary. It also does not need to be representable (i.e.\@ opfibered over $\{ \cdot \}$, i.e.\@ monoidal)
and may be equipped with the structure of symmetric or braided opmulticategory. 
In that case all other multicategories and 2-multicategories occurring, e.g.\@ $\mathcal{S}^{\cor}$, will also be symmetric, resp.\@ braided, and all functors have to be compatible with the corresponding actions. 

\begin{DEF}\label{DEFMULTIBASECHANGE}Let $\mathcal{S}$ be an opmulticategory with multipullbacks. 
Let $\mathcal{D} \rightarrow \mathcal{S}^{\op}$ be a bifibration of usual multicategories. We say that it satisfies
{\bf multi-base-change}, if for every multipullback in $\mathcal{S}$
\[ \xymatrix{
X_1, \dots, X_i, \dots,  X_n  & \ar[l]_-{g}  Z\\
X_1, \dots, X_i', \dots,  X_n \ar[u]^{(\id, \dots, f, \dots, \id)}  & Z' \ar[l]^-G  \ar[u]_F 
} \]
the natural transformation
\[ g_\bullet(-, \dots, \underbrace{f^\bullet-}_{\text{at }i}, \dots, -) \longrightarrow F^\bullet G_\bullet(-, \dots, -)  \]
is an isomorphism. In the case that $\mathcal{S}$ is a usual category equipped with the opmulticategory structure (\ref{opmulti}) this encodes {\em projection formula} and {\em base change}. 
\end{DEF}

In this definition, $f^\bullet$ denotes the pull-back along $f^{\op}$ in $\mathcal{S}^{\op}$, that is, the usual push-forward $f_*$ along $f$ in $\mathcal{S}$. The reason for this notation is that we stick to the convention that $f^\bullet$ is always right adjoint to $f_\bullet$ and, at the same time, we want to avoid the notation $f_*, f^*, f^!, f_!$ because of the possible confusion with the left and right Kan extension functors which will be denoted by $\alpha_!$, and $\alpha_*$, respectively.

\vspace{0.2cm}
{\bf Theorem \ref{MAINTHEOREM1}}. 
{\em Let $\mathcal{S}$ be a (symmetric) opmulticategory with multipullbacks and let $\SSS^{\op}$ be the (symmetric) pre-multiderivator  represented by $\mathcal{S}^{\op}$ . 
Let $\DD \rightarrow \SSS^{\op}$ be a (symmetric) left and right fibered multiderivator such that the following holds:
\begin{enumerate}
\item 
The pullback along 1-ary morphisms (i.e.\@ pushforward along 1-ary morphisms in $\mathcal{S}$) commutes also with homotopy {\em co}limits (of shape in $\Dia$).
\item 
In the underlying bifibration $\DD(\cdot) \rightarrow \SSS(\cdot)$ multi-base-change holds in the sense of Definition~\ref{DEFMULTIBASECHANGE}. 
\end{enumerate}
Then there exists a (symmetric) oplax left fibered multiderivator
\[ \EE \rightarrow \SSS^{\cor,G,\oplax}  \]
satisfying the following properties
\begin{itemize}
\item[a)] 
The corresponding (symmetric) 1-opfibration, and 2-opfibration of 2-multicategories with 1-categorical fibers
\[ \EE(\cdot) \rightarrow \SSS^{\cor,G,\oplax}(\cdot) = \mathcal{S}^{\cor,G}  \]
is just (up to equivalence) obtained from $\DD(\cdot) \rightarrow \mathcal{S}^{\op}$ by the procedure described in \cite[Definition~3.12]{Hor15b}.
\item[b)] For every $S \in \mathcal{S}$ there is a canonical equivalence of fibers: 
\[ \EE_S \cong \DD_S  \]
and if those are stable, {\em all} fibers of $\EE \rightarrow \SSS^{\cor,G}$ are stable derivators with domain $\Posf$ (in particular also right derivators). 
\end{itemize} }

\vspace{0.2cm}

Using standard theorems on Brown representability etc.\@ \cite[Section 3.1]{Hor15}  we can refine this.

\vspace{0.2cm}
{\bf Theorem \ref{MAINTHEOREM2}}. 
{\em Let $\Dia$ be an infinite diagram category \cite[Definition 1.1.1]{Hor15} which contains all finite posets. 
Let $\mathcal{S}$ be a (symmetric) opmulticategory with multipullbacks and let $\SSS$ be the corresponding represented (symmetric) pre-multiderivator with domain $\Dia$. 
Let $\DD \rightarrow \SSS^{\op}$ be an {\em infinite} (symmetric) left and right fibered multiderivator with domain $\Dia$ satisfying conditions 1.\@ and 2.\@ of Theorem~\ref{MAINTHEOREM1}, {\em with stable, perfectly generated fibers} (cf.\@ Definition~\ref{DEFSTABLE} and \cite[Section 3.1]{Hor15}).

Then the restriction of the left fibered multiderivator $\EE$ from Theorem~\ref{MAINTHEOREM1} is a (symmetric) left {\em and right} fibered multiderivator with domain $\Dia$
\[ \EE|_{\SSS^{\cor}} \rightarrow \SSS^{\cor} \]
and has an extension as a (symmetric) {\em lax} right fibered multiderivator with domain $\Dia$
\[ \EE' \rightarrow \SSS^{\cor,G,\lax}. \] }

\vspace{0.2cm}

We call $\EE|_{\SSS^{\cor}}$ 
together with the extensions to $\SSS^{\cor,G,\lax}$ and $\SSS^{\cor,G,\oplax}$, respectively, a {\bf derivator Grothendieck context}, cf.\@ Definition~\ref{DEF6FUDER}.

The construction in \cite[Definition~3.12]{Hor15b} recalled above is quite tautological. Why is Theorem~\ref{MAINTHEOREM1} not similarly tautological? To understand this point, let us look at the following simple example:
A fibered derivator $\DD$ over $\Delta_1$, the (represented pre-derivator of the) usual category with one arrow, encodes an enhancement of an adjunction between derivators (the two fibers of $\DD$). Think about the case, where this is the derived adjunction coming from an adjunction of underived functors $f_*, f^*$. 
This includes, for instance, as fiber of $\DD(\Delta_1)$ over the identity in $\Fun(\Delta_1, \Delta_1)$, the category of {\em coherent} diagrams of the form
$X \rightarrow f_* Y$, or equivalently $f^* X \rightarrow Y$, up to quasi-isomorphisms between such diagrams. Here $f_*, f^*$ are the {\em underived} functors. 
The extension $\EE$ in the theorem allows to consider {\em coherent} diagrams of the form
$f_* X \rightarrow Y$, or equivalently $X \rightarrow f^! Y$, if $f_*$ has a right adjoint $f^!$, the point being that, however, $f^!$ may not exist before passing to the derived categories. Nevertheless, we are now allowed to speak about ``coherent diagrams of the form $X \rightarrow f^! Y$'' although this does not make literally sense. 
In particular, the theorem yields a {\em coherent enhancement} in $\DD(\Delta_1)_{p^* e_i}$ for $i=0$, and $i=1$, respectively, of the unit and counit 
\[  Rf_* f^! \mathcal{E} \rightarrow \mathcal{E} \qquad  \mathcal{E} \rightarrow f^! Rf_*   \mathcal{E}  \]
where $f^!$ now denotes a right adjoint of the derived functor $Rf_*$.

In this particular case, the {\em construction} boils down to the following.
The fiber of $\EE(\Delta_1)$ over the correspondence
\[ \xymatrix{
& e_0 \ar[ld] \ar@{=}[rd] \\
e_1 & & e_0
} \]
will consist of coherent diagrams of the form $f_*X \leftarrow Z \rightarrow Y$ {\em in the original fibered derivator $\DD$} with the property that the induced morphism $R f_*X \leftarrow Z$ is a 
quasi-isomorphism, i.e.\@ the morphism $X \leftarrow Z$ in $\DD(\Delta_1)$ becomes {\em coCartesian}, when considered as a morphism in $\DD(\cdot)$. The purpose of the second part of this article is thus to make this construction work in the full generality of Theorem~\ref{MAINTHEOREM1}. Although the idea is still very simple, the construction of $\EE$ (cf.\@ Definiton~\ref{DEFE}) and the proof that it really is a (left) fibered multiderivator over $\SSS^{\cor}$, becomes quite technical.

\subsection*{Localization triangles}

As an application of the general definition of derivator six-functor-formalisms, in Section~\ref{SECTIONLOC} we explain that the appearance of distinguished triangles like 
\[ \xymatrix{ j_! j^! \mathcal{E} \ar[r] &  \mathcal{E} \ar[r] & \overline{j}_* \overline{j}^* \mathcal{E} \ar[r]^-{[1]} &   } \]
for an ``open immersion'' $j$ and its complementary ``closed immersion'' $\overline{j}$ can be treated elegantly. 
Actually there are four flavours of these sequences, two for
proper derivator six-functor-formalisms, and two for etale derivator six-functor-formalisms. More generally, a sequence of ``open embeddings''
\[ \xymatrix{ X_1 \ar@{^{(}->}[r] & X_2 \ar@{^{(}->}[r] & \cdots \ar@{^{(}->}[r] & X_n  } \]
leads immediately to so called $(n+1)$-angles in the sense of \cite[\S 13]{GS14b} in the fiber over $X_n$ which is a usual stable derivator.

\section{Pre-2-multiderivators}\label{SECTION2PREMULTIDER}

We fix a diagram category $\Dia$ \cite[Definition 1.1.1]{Hor15} once and for all. 
From Section~\ref{CONSTRUCTION} on, we assume that in $\Dia$, in addition to the axioms of \cite{Hor15}, the construction of the diagrams ${}^\Xi I$ of \ref{PARTW} is permitted for any $I \in \Dia$. 
If one wants to specify $\Dia$, one would speak about e.g.\@ pre-2-multiderivators, or fibered multiderivators, {\em with domain $\Dia$}. For better readability we omit this. This is justified because all arguments of this article are completely formal, not depending on the choice of $\Dia$ at all. An exception is Theorem~\ref{MAINTHEOREM2} where
Brown representability type results are applied. 

\begin{DEF}\label{DEF2PREMULTIDER}
A {\bf pre-2-multiderivator} is a functor $\SSS: \Dia^{1-\op} \rightarrow \text{2-$\mathcal{MCAT}$}$ which is strict in 1-morphisms (functors) and pseudo-functorial in 2-morphisms (natural transformations)\footnote{Here 2-$\mathcal{MCAT}$ denotes the 3-category of 2-multicategories. We will not refer to any 3-categorical language in the sequel.}. 
More precisely, it associates with a diagram $I$ a 2-multicategory $\SSS(I)$, with a functor $\alpha: I \rightarrow J$ a strict functor
\[ \SSS(\alpha):  \SSS(J) \rightarrow \SSS(I) \] 
denoted also $\alpha^*$, if $\SSS$ is understood, and with a natural transformation $\mu: \alpha \Rightarrow \alpha'$ a pseudo-natural transformation
\[ \SSS(\eta): \alpha^* \Rightarrow (\alpha')^* \]
such that the following holds:
\begin{enumerate}
\item The association
\[ \Fun(I, J) \rightarrow \Fun(\SSS(J), \SSS(I)) \]
given by $\alpha \mapsto \alpha^*$ on 1-morphisms, resp.\@ $\mu \mapsto \SSS(\mu)$ on 2-morphisms, is
a pseudo-functor (note that this involves the choice of further data). Here $\Fun(\SSS(J), \SSS(I))$ is the 2-category of strict 2-functors, pseudo-natural transformations, and modifications. 
\item (Strict functoriality w.r.t.\@ compositons of 1-morphisms) For functors $\alpha: I \rightarrow J$ and $\beta: J \rightarrow K$, we have 
an {\em equality} of pseudo-functors $\Fun(I, J) \rightarrow \Fun(\SSS(I), \SSS(K))$
\[ \beta^* \circ \SSS(-) = \SSS(\beta \circ -).   \]
\end{enumerate}

A {\bf symmetric, resp.\@ braided pre-2-multiderivator} is given by the structure of strictly symmetric (resp.\@ braided) 2-multicategory on $\SSS(I)$ such that
the strict functors $\alpha^*$ are equivariant w.r.t.\@ the action of the symmetric groups (resp.\@ braid groups). 

Similarly we define a {\bf lax, resp.\@ oplax, pre-2-multiderivator} where the same as before holds but where the 
\[ \SSS(\eta): \alpha^* \Rightarrow (\alpha')^* \]
are lax (resp.\@ oplax) natural transformations and in 1.\@ ``pseudo-natural transformations'' is replaced by ``lax (resp.\@ oplax) natural transformations''.
\end{DEF}

\begin{DEF}\label{DEF2PREMULTIDERSTRICTMOR}
A strict morphism $p: \DD \rightarrow \SSS$ of pre-2-multiderivators (resp.\@ lax/oplax pre-2-multiderivators) is given by a collection of strict 2-functors
\[ p(I): \DD(I) \rightarrow \SSS(I) \]
for each $I \in \Dia$ such that we have $\SSS(\alpha) \circ p(J) = p(I) \circ \DD(\alpha)$ and $\SSS(\mu) \ast p(J) = p(I) \ast \DD(\mu)$ 
 for all functors $\alpha: I \rightarrow J$, $\alpha': I \rightarrow J$ and natural transformations $\mu: \alpha \Rightarrow \alpha'$ as illustrated by the following diagram:
\[ \xymatrix{
\DD(J) \ar[rr]^{p(J)} \ar@/_15pt/[dd]_{\DD(\alpha)}^{\phantom{x}\overset{\DD(\mu)}{\Rightarrow}} \ar@/^15pt/[dd]^{\DD(\alpha')} && \SSS(J) \ar@/_15pt/[dd]_{\SSS(\alpha)}^{\phantom{x}\overset{\SSS(\mu)}{\Rightarrow}} \ar@/^15pt/[dd]^{\SSS(\alpha')} \\
\\
\DD(I) \ar[rr]^{p(I)} && \SSS(I)
} \]
\end{DEF}

\begin{DEF}
Given a (lax/oplax) pre-2-derivator $\SSS$, we define
\[ \SSS^{1-\op}:  I \mapsto \SSS(I^{\op})^{1-\op}  \]
and given a (lax/oplax) pre-2-multiderivator $\SSS$, we define
\[ \SSS^{2-\op}:  I \mapsto \SSS(I)^{2-\op}  \]
reversing the arrow in the (lax/oplax) pseudo-natural transformations. I.e.\@ the second operation interchanges lax and oplax pre-2-multiderivators. 
\end{DEF}

\begin{PAR}\label{PARDER12}
As with usual pre-multiderivators we consider the following axioms: 

\begin{itemize}
\item[(Der1)] For $I, J \in \Dia$, the natural functor $\DD(I \coprod J) \rightarrow \DD(I) \times \DD(J)$ is an equivalence of 2-multicategories. Moreover $\DD(\emptyset)$ is not empty.
\item[(Der2)]
For $I \in \Dia$ the `underlying diagram' functor
\[ \dia: \DD(I) \rightarrow \Fun(I, \DD(\cdot)) \quad \text{resp. } \Fun^{\lax}(I, \DD(\cdot)) \quad \text{resp. }  \Fun^{\oplax}(I, \DD(\cdot))\]
is 2-conservative (this means that it is conservative on 2-morphisms and that a 1-morphism $\alpha$ is an equivalence if $\dia(\alpha)$ is an equivalence).
\end{itemize}
\end{PAR}

\begin{PAR}\label{PARREPR2PREMULTIDER}
Let $\mathcal{D}$ be a 2-multicategory. We define the following {\bf representable} pre-2-multiderivator:
\begin{eqnarray*}
\DD:  \Dia &\rightarrow& \text{2-$\mathcal{MCAT}$}  \\
 I &\mapsto& \Fun(I, \mathcal{D})
\end{eqnarray*}
where $\Fun(I, \mathcal{D})$ is the 2-multicategory of pseudo-functors, pseudo-natural transformations, and modifications.
This is usually considered only if all 2-morphisms in $\mathcal{D}$ are invertible. 

We define a {\bf representable} {\em lax} pre-2-multiderivator as 
\begin{eqnarray*}
\DD^{\mathrm{lax}}:  \Dia &\rightarrow& \text{2-$\mathcal{MCAT}$}  \\
 I &\mapsto& \Fun^{\mathrm{lax}}(I, \mathcal{D})
\end{eqnarray*}
where $\Fun^{\mathrm{lax}}(I, \mathcal{D})$ is the 2-multicategory of pseudo-functors, {\em lax} natural transformations, and modifications,
and similarly a representable oplax pre-2-multiderivator $\DD^{\mathrm{oplax}}$.
\end{PAR}

\section{Correspondences of diagrams in a pre-2-multiderivator}

As explained in the introduction, the first goal of this article is to extend the notion of {\em fibered multiderivator} to bases which are pre-2-multiderivators instead of pre-multiderivators. 
A definition as in \cite{Hor15}, specifying axioms (FDer0) and (FDer3--5) for a morphism $p: \DD \rightarrow \SSS$ of pre-2-multiderivators, is possible (cf.\@ Theorem~\ref{MAINTHEOREMFIBDER2}). However, as was explained in \cite{Hor15b} for usual fibered multiderivators, a much neater definition involving a certain category $\Dia^{\cor}(\SSS)$  \cite[Definition~5.7]{Hor15b} of correspondences of diagrams in $\SSS$ is possible. In this article we take this as our principal approach. We therefore extend the definition of $\Dia^{\cor}(\SSS)$ for pre-multiderivators to pre-2-multiderivators (resp.\@ to lax/oplax pre-2-multiderivators). For this, we first have to extend the definition of $\Span_{\SSS}(I_1, \dots, I_n; J)$ to pre-2-multiderivators $\SSS$. First recall the following 

\begin{DEF}[{\cite[Definition~4.3]{Hor15b}}]\label{DEFSPAN}
Let $I_1, \dots, I_n, J$ be diagrams in $\Dia$. Define $\Span(I_1, \dots, I_n; J)$ to be the following strict 2-category:
\begin{enumerate}
\item
The objects are diagrams of the form
\[ \xymatrix{ &&&A \ar[dlll]_{\alpha_1} \ar[dl]^{\alpha_n}  \ar[dr]^{\beta} \\ 
I_1 & \cdots & I_n &;& J
} \]
with $A \in \Dia$.
\item The 1-morphisms $(A, \alpha_1, \dots, \alpha_n, \beta) \Rightarrow (A', \alpha_1', \dots, \alpha_n', \beta')$ are functors $\gamma: A \rightarrow A'$ and natural transformations $\nu_1, \dots, \nu_n, \mu$:
\[ \vcenter{ \xymatrix{
A \ar[rd]_{\alpha_i} \ar[rr]^{\gamma} & \ar@{}[d]|{\Rightarrow^{\nu_i}} & A' \ar[ld]^{\alpha_i'}  \\
&I_i
} } \qquad \vcenter{ \xymatrix{
A \ar[rd]_{\beta} \ar[rr]^{\gamma} & \ar@{}[d]|{\Leftarrow^\mu}  & A' \ar[ld]^{\beta'}  \\
&J
} } \]
\item The 2-morphisms are natural transformations
$\eta: \gamma \Rightarrow \gamma'$ such that $(\alpha_i' \ast \eta) \circ \nu_i  = \nu_i'$ and $\mu' \circ (\beta' \ast \eta)  = \mu$ hold.
\end{enumerate}
\end{DEF}
We define also the full subcategory $\Span^{F}(I_1, \dots, I_n; J)$ of those objects for which $\alpha_1 \times \cdots \times \alpha_n: A \rightarrow I_1 \times \cdots \times I_n$ is a fibration and $\beta$ is an opfibration.
The $\gamma$'s do not need to be morphisms of fibrations, respectively of opfibrations.

\begin{DEF}[{\cite[Definition~4.4]{Hor15b}}]\label{DEFDIACOR}
We define the {\bf 2-multicategory of correspondences of diagrams} $\Dia^{\cor}$ as the following 2-multicategory (cf.\@ \cite[1.9]{Hor15b} for the notation $\tau_1$):
\begin{enumerate}
\item The objects are diagrams $I \in \Dia$.
\item For every $I_1, \dots, I_n, J$ diagrams in $\Dia$, the category $\Hom_{\Dia^{\cor}}(I_1, \dots, I_n; J)$ of 1-morphisms of $\Dia^{\cor}$ is the truncated category $\tau_1(\Span^F(I_1, \dots, I_n; J))$. 
\end{enumerate}
\end{DEF}

Composition is defined by taking fiber products. The diagram (forgetting the functor to $J_i$)
\[ \xymatrix{
&&&&& A \times_{J_i} B  \ar[lld] \ar[rrd] \\
&&& A \ar[llld] \ar[ld] \ar[rrrd]^{\beta_A} &&&&  B \ar[llld] \ar[ld]^{\alpha_{B,i}}  \ar[rd] \ar[rrrd] \\
I_1 & \cdots & I_n & ; & J_1 & \cdots & J_i & \cdots & J_m & ; & K
} \]
is defined to be the composition of the left hand side correspondence in $\Hom(I_1, \dots, I_n; J_i)$ with the right hand side correspondence in $\Hom(J_1, \dots, J_m; K)$. 
One checks that $A \times_{J_i} B \rightarrow J_1 \times \cdots \times J_{i-1} \times I_1 \times \cdots \times I_n \times J_{i+1} \times \cdots \times J_m$ is again a fibration and that $A \times_{J_i} B \rightarrow K$ is again an opfibration. From \cite[Lemma~4.4]{Hor15b} it follows that the composition is functorial in 2-morphisms and that the relations in $\pi_0$ are respected.
We assume that strictly associative fiber products have been chosen in $\Dia$ (cf.\@ Remark~\ref{BEMAFP} for a similar construction). 

The following is a 2-categorical generalization of the constructions \cite[2.2]{Hor15b} and \cite[Definition~2.3]{Hor15b}:

\begin{DEF}\label{DEFSPANS2X} Let $\SSS$ be a (lax/oplax) pre-2-multiderivator. 
For each collection
$(I_1, S_1), \dots, (I_n, S_n); (J,T)$, where $I_1, \dots, I_n, J$ are diagrams in $\Dia$ and $S_i \in \SSS(I_i), T \in \SSS(J)$ are objects, 
we define a pseudo-functor
\[ \Span_{\SSS}: \Span(I_1 , \dots, I_n; J)^{1-\op} \rightarrow \mathcal{CAT} \]
in the oplax case and
\[ \Span_{\SSS}: \Span(I_1 , \dots, I_n; J)^{1-\op, 2-\op} \rightarrow \mathcal{CAT} \]
in the lax case. 
$\Span_{\SSS}$ maps a multicorrespondence of diagrams in $\Dia$
 \[ \xymatrix{ &&&A \ar[dlll]_{\alpha_1} \ar[dl]^{\alpha_n}  \ar[dr]^{\beta} \\ 
I_1 & \cdots & I_n && J
} \]
to the category 
\[ \Hom_{\SSS(A)}(\alpha_1^* S_1, \dots, \alpha_n^* S_n; \beta^*T), \]
maps a 1-morphism $(\gamma, \nu_1, \dots, \nu_n, \mu)$ to the functor
\[  \rho \mapsto \SSS(\mu)(T) \circ (\gamma^* \rho) \circ (\SSS(\nu_1)(S_1), \dots, \SSS(\nu_n)(S_n)) \]
and maps a 2-morphism represented by $\eta: \gamma \Rightarrow \gamma'$ (and such that $(\alpha_i' \ast \eta) \circ \nu_i  = \nu_i'$ and $\mu' \circ (\beta' \ast \eta)  = \mu$) to
the morphism
\begin{equation}\label{eqspans2x1}
  \SSS(\mu)(T) \circ (\gamma^* \rho) \circ (\SSS(\nu_1)(S_1), \dots, \SSS(\nu_n)(S_n)) \leftrightarrow \SSS(\mu')(T) \circ ((\gamma')^* \rho) \circ (\SSS(\nu_1)(S_1), \dots, \SSS(\nu_n)(S_n))  
\end{equation}
given as the composition of the isomorphisms coming from the pseudo-functoriality of $\SSS: \Hom(I, J) \rightarrow \Hom(\SSS(J), \SSS(I))$: 
\[ \SSS(\mu)(T) \iso  \SSS(\mu')(T) \circ  \underbrace{\SSS(\beta' \ast \eta)(T)}_{= \SSS(\eta)((\beta')^*T)}    \]
\[  \underbrace{\SSS(\alpha_i' \ast \eta)(S_i)}_{= \SSS(\eta)((\alpha_i')^*S_i)} \circ \SSS(\nu_i)(S_i) \iso \SSS(\nu_i)(S_i)    \]
with the morphism
\begin{equation}\label{eqspans2x2}
 \SSS(\eta)((\beta')^*T) \circ (\gamma^* \rho)  \leftrightarrow ((\gamma')^* \rho) \circ (\SSS(\eta)((\alpha'_1)^*S_1), \dots, \SSS(\eta)((\alpha'_n)^*S_n))   
\end{equation}
coming from the fact that $\SSS(\eta)$ is a (lax/oplax) pseudo-natural transformation $\gamma^* \Rightarrow  (\gamma')^*$. The morphisms  (\ref{eqspans2x1}) and (\ref{eqspans2x2}) point to the left in the lax case and to the right in the oplax case. 
\end{DEF}

\begin{DEF}\label{DEFSPANS2} Let $\SSS$ be a (lax/oplax) pre-2-multiderivator. 
Let $S_i \in \SSS(I_i)$, for $i=1, \dots, n$ and $T \in \SSS(J)$ be objects. 
Let \[ \Span_{\SSS}((I_1, S_1), \dots, (I_n, S_n); (J,T)) \] be the strict 2-category obtained from the pseudo-functor $\Span_{\SSS}$ defined in \ref{DEFSPANS2X}
by the 2-categorical Grothendieck construction \cite[Definition~2.15]{Hor15b}. 
\end{DEF}

Both definitions depend on the choice of $\Dia$, but we do not specify it explicitly. 

\begin{PAR}
The category $\Span_{\SSS}((I_1, S_1), \dots, (I_n, S_n); (J,T))$ defined in~\ref{DEFSPANS2} is very important to understand fibered multiderivators. Therefore we explicitly spell out the definition in detail:
\begin{enumerate}
\item
Objects are a multicorrespondence of diagrams in $\Dia$
\[ \xymatrix{ &&&A \ar[dlll]_{\alpha_1} \ar[dl]^{\alpha_n}  \ar[dr]^{\beta} \\ 
I_1 & \cdots & I_n && J
} \]
together with a 1-morphism 
\[ \rho \in \Hom(\alpha_1^*S_1, \dots, \alpha_n^*S_n; \beta^*T) \]
in $\SSS(A)$. 

\item The 1-morphisms $(A, \alpha_1, \dots, \alpha_n, \beta, \rho) \rightarrow (A', \alpha_1', \dots, \alpha_n', \beta', \rho')$ are tuples
$(\gamma, \nu_1, \dots, \nu_n, \mu, \Xi)$, where 
 $\gamma: A \rightarrow A'$ is a functor,  $\nu_i$ is a natural transformation in 
\[ \xymatrix{
A \ar[rd]_{\alpha_i} \ar[rr]^{\gamma} & \ar@{}[d]|{\overset{\nu_i}{\Rightarrow}} & A' \ar[ld]^{\alpha_i'}  \\
&I_i
} \]
and $\mu$ is a natural transformation in 
\[ \xymatrix{
A \ar[rd]_{\beta} \ar[rr]^{\gamma} & \ar@{}[d]|{\overset{\mu}{\Leftarrow}}  & A' \ar[ld]^{\beta'}  \\
&J
} \]
and $\Xi$ is a 2-morphism in 
\[ \xymatrix{
\gamma^* (\alpha')^* S \ar[rr]^{\gamma^* \rho'}  && \gamma^* (\beta')^* T \ar[dd]^{\SSS(\mu)} \\
\ar@{}[rr]|{\Uparrow^{\Xi}} &&\\
\alpha^* S \ar[uu]^{\SSS(\nu)} \ar[rr]^\rho && \beta^* T
} \]

\item The 2-morphisms are the natural transformations
$\eta: \gamma \Rightarrow \gamma'$ such that $(\alpha_i' \ast \eta) \circ \nu_i  = \nu_i'$ and $(\beta' \ast \eta) \circ \mu' = \mu$
and such that following prism-shaped diagram 
\[ \xymatrix{
\gamma^* (\alpha')^* S \ar[rrrrr]^{\gamma^* \rho'}  \ar[drdr]^{\qquad\SSS(\alpha' \ast \eta)(S)=\SSS(\eta)((\alpha')^*
S) } &&&&& \gamma^* (\beta')^* T \ar[dddd]^{\SSS(\mu)(T)} \ar[dldl]_{\SSS(\eta)((\beta')^*T)=\SSS(\beta' \ast \eta)(T)\qquad}  \\
&& \ar@{}[r]_{\Updownarrow^{\SSS(\alpha' \ast \eta)(\rho')}} &&  \\
&\overset{\SSS_{\eta, \nu}(S)}{\Rightarrow} &(\gamma')^* (\alpha')^*S \ar[r]^{(\gamma')^* \rho'} & (\gamma')^* (\beta')^*T \ar[drdr]_{\SSS(\mu')(T)} & \overset{\SSS_{\mu',\eta}(T)}{\Rightarrow}  \\
&& \ar@{}[r]|{\Uparrow^{\Xi'}} && \\
\alpha^* S \ar[uuuu]^{\SSS(\nu)(S)} \ar[rrrrr]^\rho \ar[urur]_{\SSS(\nu')(S)} &&&&&  \beta^* T
} \]
is 2-commutative, where the 2-morphism in the front face (not depicted) points upwards and is $\Xi$. We assumed here $n=1$ for simplicity.  Note that we have $\SSS(\alpha' \ast \eta)(S)=\SSS(\eta)((\alpha')^*
S)$ because $\SSS$ is strictly compatible with composition of 1-morphisms (cf.\@ Definition~\ref{DEF2PREMULTIDER}). Note that the 2-morphism denoted $\Updownarrow$ goes up in the lax case and down in the oplax (and plain) case while the two  `horizontal' 2-morphisms are invertible. 
\end{enumerate}
We again define the full subcategory $\Span^F_{\SSS}$ insisting that $\alpha_1 \times \cdots \times \alpha_n: A \rightarrow I_1 \times \cdots \times I_n$ is a Grothendieck fibration and $\beta$ is an opfibration.
\end{PAR}

\begin{LEMMA}\label{LEMMASPAN1}
Let $p: \DD \rightarrow \SSS$ be a strict morphism of (lax/oplax) pre-2-multiderivators (cf.\@ Definition~\ref{DEF2PREMULTIDERSTRICTMOR}). 

Consider the strictly commuting diagram of 2-categories and strict 2-functors
\[ \xymatrix{
\Span^F_{\DD}((I_1, \mathcal{E}_1), \dots, (I_n, \mathcal{E}_n); (J,\mathcal{F})) \ar@{^{(}->}[r] \ar[d] & \Span_{\DD}((I_1, \mathcal{E}_1), \dots, (I_n, \mathcal{E}_n); (J,\mathcal{F})) \ar[d] \\
\Span^F_{\SSS}((I_1, S_1), \dots, (I_n, S_n); (J,T))) \ar@{^{(}->}[r] \ar[d] & \Span_{\SSS}((I_1, S_1), \dots, (I_n, S_n); (J,T)) \ar[d] \\
\Span^F(I_1, \dots, I_n; J) \ar@{^{(}->}[r] &  \Span(I_1, \dots, I_n; J)
} \]
\begin{enumerate}
\item If the functors $\Hom_{\DD(I)}(-,-) \rightarrow \Hom_{\SSS(I)}(-,-)$ induced by $p$ are fibrations, the vertical 2-functors are 1-fibrations with 1-categorical fibers. 
They are 2-fibrations in the lax case and 2-opfibrations in the oplax case. 
\item If the functors $\Hom_{\DD(I)}(-,-) \rightarrow \Hom_{\SSS(I)}(-,-)$ induced by $p$ are fibrations {\em with discrete fibers}, then the upper vertical 2-functors have discrete fibers. 
\item Every object in a 2-category on the right hand side is in the image of the corresponding horizontal 2-functor {\em up to a chain of adjunctions}. 
\end{enumerate}
\end{LEMMA}

\begin{proof}This Lemma is a straightforward generalization of \cite[Lemma~5.4]{Hor15b}.
1.\@ and 2.\@ follow directly from the definition.
3.\@ 
We  first embed the left hand side category, say $\Span_{\SSS}^F((I_1, S_1), \dots, (I_n, S_n); (J,T))$, into the full subcategory of $\Span_{\SSS}((I_1, S_1), \dots, (I_n, S_n); (J,T))$ consisting of objects $(A, \alpha_1, \dots, \alpha_n, \beta, \rho)$, in which $\beta$ is an opfibration but the $\alpha_i$ are arbitrary. 
We will show that every object is connected by an adjunction with an object of this bigger subcategory. By a similar argument one shows that this holds also for the second inclusion. 

Consider an arbitrary correspondence $\xi'$ of diagrams in $\Dia$
 \[ \xymatrix{
& & & A \ar[rd]^{\beta} \ar[ld]^{\alpha_n}  \ar[llld]_{\alpha_1} \\
I_1 & \cdots & I_n & & J
} \]
and the 1-morphisms in $\Span(I_1, \dots, I_n; J)$
 
 \[ 
 \xymatrix{
& & & A  \times_{/J} J \ar[rd]^{\pr_2} \ar[ld]^{\alpha_n\pr_1} \ar[llld]_{\alpha_1\pr_1} \ar[dd]^{\pr_1} \\
I_1 & \cdots & I_n & \ar@{}[r]|{\Rightarrow^{\mu}} & J \\
& & & A  \ar[ru]_{\beta} \ar[lu]_{\alpha_n} \ar[lllu]^{\alpha_1} 
}  \quad  \xymatrix{
& & & A  \ar[rd]^{\beta} \ar[ld]^{\alpha_n} \ar[llld]_{\alpha_1} \ar[dd]^{\Delta}   \\
I_1 & \cdots & I_n &  & J \\
& & & A  \times_{/J} J \ar[ru]_{\pr_2} \ar[lu]_{\alpha_n\pr_1} \ar[lllu]^{\alpha_1\pr_1} 
} \]

One easily checks that $\pr_1 \circ \Delta = \id_A$ and that the obvious 2-morphism  $\Delta \circ \pr_1 \Rightarrow \id_{A\times_{/J} J}$ induced by $\mu$
define an adjunction in the 2-category $\Span(I_1, \dots, I_n; J)$. 
Using \cite[Lemma~5.5]{Hor15b}, we get a corresponding adjunction also in the 2-category $\Span_{\SSS}((I_1, S_1), \dots, (I_n, S_n); (J, T))$.
\end{proof}

\begin{LEMMA}\label{PAREQSPANFIBERED}
Let $p: \DD \rightarrow \SSS$ be a morphism of (lax/oplax) pre-2-multiderivators. Consider the following strictly commuting diagram of functors obtained from the one of Lemma~\ref{LEMMASPAN1} by 1-truncation \cite[1.9]{Hor15b}:  
\[ \xymatrix{
\tau_1(\Span^F_{\DD}((I_1, \mathcal{E}_1), \dots, (I_n, \mathcal{E}_n); (J,\mathcal{F}))) \ar@{^{(}->}[r] \ar[d] & \tau_1(\Span_{\DD}((I_1, \mathcal{E}_1), \dots, (I_n, \mathcal{E}_n); (J,\mathcal{F}))) \ar[d] \\
\tau_1(\Span^F_{\SSS}((I_1, S_1), \dots, (I_n, S_n); (J,T))) \ar@{^{(}->}[r] \ar[d] & \tau_1(\Span_{\SSS}((I_1, S_1), \dots, (I_n, S_n); (J,T))) \ar[d] \\
\tau_1( \Span^F(I_1, \dots, I_n; J)) \ar@{^{(}->}[r] & \tau_1( \Span(I_1, \dots, I_n; J))
} \]
\begin{enumerate}
\item The horizontal functors are equivalences. 
\item If the functors $\Hom_{\DD(I)}(-,-) \rightarrow \Hom_{\SSS(I)}(-,-)$ induced by $p$ are fibrations {\em with discrete fibers}, then the upper vertical morphisms are fibrations with discrete fibers. Furthermore the top-most horizontal functor maps Cartesian morphisms to Cartesian morphisms. 
\end{enumerate}

\end{LEMMA}
\begin{proof}
This Lemma is a straightforward generalization of \cite[Lemma~5.6]{Hor15b}.
That the horizontal morphisms are equivalences follows from the definition of the truncation and Lemma~\ref{LEMMASPAN1}, 3. 
If we have a 1-fibration and 2-isofibration of 2-categories $\mathcal{D} \rightarrow \mathcal{C}$ with
{\em discrete} fibers  then the truncation $\tau_1(\mathcal{D}) \rightarrow \tau_1(\mathcal{C})$ is again fibered (in the 1-categorical sense). Hence the second assertion follows from Lemma~\ref{LEMMASPAN1}, 2.
\end{proof}

\begin{DEF}\label{PARDEFDIACORD2}
Let $\SSS$ be a pre-2-multiderivator. 
We define a 2-multicategory $\Dia^{\cor}(\SSS)$ equipped with a strict functor
\[ \Dia^{\cor}(\SSS) \rightarrow \Dia^{\cor} \] 
as follows
\begin{enumerate}
\item The objects of $\Dia^{\cor}(\SSS)$ are pairs $(I, S)$ consisting of $I \in \Dia$ and $S \in \SSS(I)$.

\item The category $\Hom_{\Dia^{\cor}(\SSS)}((I_1, S_1), \dots, (I_n, S_n); (J, T))$ of 1-morphisms of $\Dia^{\cor}(\SSS)$ is
the truncated category $\tau_1(\Span^F_{\SSS}((I_1, S_1), \dots, (I_n, S_n); (J, T)))$. 
\end{enumerate}

Composition is given by the composition of correspondences of diagrams
\[ \xymatrix{
&&&&& A \times_{J_i} B  \ar[lld]_{\pr_1} \ar[rrd]^{\pr_2} \\
&&& A \ar[llld] \ar[ld] \ar[rrrd]^{\beta_A} &&&&  B \ar[llld] \ar[ld]^{\alpha_{B,i}}  \ar[rd] \ar[rrrd] \\
I_1 & \cdots & I_n & ; & J_1 & \cdots & J_i & \cdots & J_m & ; & K
} \]
and composing $\rho_A \in \Hom(\alpha^*_{A,1}S_1, \dots, \alpha_{A,n}^* S_n; \beta^*_A T_i)$ with 
$\rho_B \in \Hom(\alpha^*_{B,1}T_1, \dots, \alpha_{B,m}^* T_m; \beta_B^* U)$ to
\[  (\pr_2^* \rho_B) \circ_i  (\pr_1^* \rho_A). \]

If $\SSS$ is symmetric or braided, then there is a natural action of the symmetric, resp.\@ braid groups:
\[ \Hom_{\Dia^{\cor}(\SSS)}((I_1,S_1), \dots, (I_n, S_n); (J, T)) \rightarrow  \Hom_{\Dia^{\cor}(\SSS)}((I_{\sigma(1)},S_{\sigma(1)}), \dots, (I_{\sigma(n)}, S_{\sigma(n)}); (J, T)) \]
involving the corresponding action in $\SSS$. This turns $\Dia^{\cor}(\SSS)$ into a symmetric, resp.\@ braided 2-multicategory. 
\end{DEF}

Note that because of the brute-force truncation this category is in general not 2-fibered anymore over $\Dia^{\cor}$.

For any strict morphism of pre-2-multiderivators $p: \DD \rightarrow \SSS$ we get an induced strict functor 
\[ \Dia^{\cor}(p): \Dia^{\cor}(\DD) \rightarrow \Dia^{\cor}(\SSS). \]

\section{Fibered multiderivators over pre-2-multiderivators}

The definition of a fibered multiderivator over pre-2-multiderivators is a straightforward generalization of the notion of {\em fibered multiderivator} from \cite{Hor15}.
In \cite{Hor15b} it was shown that this can, in a very neat way, alternatively be defined using the language of fibrations of 2-multicategories. It is also true for 
fibered multiderivators over pre-2-multiderivators. In this article we choose the slicker formulation as our definition:

\begin{DEF}\label{DEFFIBDER2}
A strict morphism $\DD \rightarrow \SSS$ of (lax/oplax) pre-2-multiderivators (Definition~\ref{DEF2PREMULTIDERSTRICTMOR}) such that $\DD$ and $\SSS$ each satisfy (Der1) and (Der2) (cf.\@ \ref{PARDER12}) is a
\begin{enumerate}
\item {\bf lax left (resp.\@ oplax right) fibered multiderivator} if the corresponding strict functor of 2-multicategories
\[ \Dia^{\cor}(p): \Dia^{\cor}(\DD) \rightarrow \Dia^{\cor}(\SSS) \]
of Definition~\ref{PARDEFDIACORD2} is a 1-opfibration (resp.\@ 1-fibration) and 2-fibration 
with 1-categorical fibers.
\item {\bf oplax left (resp.\@ lax right) fibered multiderivator} if the corresponding strict functor of 2-multicategories
\[ \Dia^{\cor}(p): \Dia^{\cor}(\DD^{2-\op}) \rightarrow \Dia^{\cor}(\SSS^{2-\op}) \]
of Definition~\ref{PARDEFDIACORD2} is a 1-opfibration (resp.\@ 1-fibration) and 2-fibration 
with 1-categorical fibers.
\end{enumerate}

Similarly, we define {\bf symmetric}, resp.\@ {\bf braided} fibered multiderivators where everything is, in addition, equipped in a compatible way with the action of the symmetric, resp.\@ braid groups. 
\end{DEF}

If in $\SSS$ all 2-morphisms are invertible then
left oplax=left lax and right oplax=right lax. In that case we omit the adjectives ``lax'' and ``oplax''. 

It seems that, in the definition, one could release the assumption on 1-categorical fibers, to get an apparently more general definition.  However, then the 1-truncation involved in the definition of $\Dia^{\cor}(\SSS)$ is probably not the right thing to work with.
In particular one does not get any generalized definition of a 2-derivator (or monoidal 2-derivator) as 2-fibered (multi-)derivator over $\{\cdot\}$.

The following Theorem~\ref{MAINTHEOREMFIBDER2} gives an alternative definition of a left/right fibered multiderivator over 
a pre-2-multiderivator $\SSS$ more in the spirit of the original (1-categorical) definition of \cite{Hor15}.

\begin{SATZ}\label{MAINTHEOREMFIBDER2}
A strict morphism $p: \DD \rightarrow \SSS$ of (lax/oplax) pre-2-multiderivators such that $\DD$ and $\SSS$ both satisfy (Der1) and (Der2) is a
{\em left (resp.\@ right) fibered multiderivator} if and only if the following axioms (FDer0 left/right) and (FDer3--5 left/right) hold true\footnote{where (FDer3--5 left), resp.\@ (FDer3--5 right), only make sense in the presence of (FDer0 left), resp.\@ (FDer0 right)}. 

Here (FDer3--4 left/right) can be replaced by the weaker (FDer3--4 left/right').
\end{SATZ}

\begin{enumerate}
\item[(FDer0 left)]
For each $I$ in $\Dia$ the morphism $p$ specializes to an 1-opfibered 2-multicategory with 1-categorical fibers. 
It is, in addition, 2-fibered in the lax case and 2-opfibered in the oplax case. 
Moreover any functor $\alpha: I \rightarrow J$ in $\Dia$ induces a  diagram
\[ \xymatrix{
\DD(J) \ar[r]^{\alpha^*} \ar[d] & \DD(I) \ar[d]\\
\SSS(J) \ar[r]^{\alpha^*} & \SSS(I) 
}\]
of 1-opfibered and 2-(op)fibered 2-multicategories, i.e.\@ the top horizontal functor maps coCartesian 1-morphisms to coCartesian 1-morphisms and (co)Cartesian 2-morphisms to (co)Cartesian 2-morphisms.

\item[(FDer3 left)]
For each functor $\alpha: I \rightarrow J$ in $\Dia$, and $S \in \SSS(J)$, the functor
$\alpha^*$ between fibers (which are 1-categories by (FDer0 left))
\[ \DD(J)_{S} \rightarrow \DD(I)_{\alpha^*S} \]
has a left adjoint $\alpha_!^{(S)}$.

\item[(FDer4 left)]
For each functor $\alpha: I \rightarrow J$ in $\Dia$, for each object $j \in J$, and for each $S \in \SSS(J)$, for the 2-commutative square
\[ \xymatrix{  I \times_{/J} j \ar[r]^-\iota \ar[d]_{\alpha_j} \ar@{}[dr]|{\Swarrow^\mu} & I \ar[d]^\alpha \\
\{j\} \ar@{^{(}->}[r]^j & J \\
} \]
 the induced natural transformation of functors $\alpha_{j,!}^{(S_j)} (\SSS(\mu)(S))_\bullet \iota^* \rightarrow j^* {\alpha}_!^{(S)} $ is an isomorphism.
\item[(FDer5 left)] For any opfibration $\alpha: I \rightarrow J $ in $\Dia$, and for any 1-morphism $\xi \in \Hom(S_1, \dots, S_n; T)$ in $\SSS(\cdot)$ for some $n\ge 1$, the natural transformation of functors
\[ \alpha_!^{(T)} (\alpha^*\xi)_\bullet (\alpha^*-, \cdots, \alpha^*-,\ \underbrace{-}_{\text{at }i}\ , \alpha^*-, \cdots, \alpha^*-) \cong  \xi_\bullet (-, \cdots, -,\ \underbrace{\alpha_!^{(S_i)}-}_{\text{at }i}\ , -, \cdots, -) \]
is an isomorphisms for all $i=1, \dots,  n$.
\end{enumerate}

Instead of (FDer3/4 left) the following axioms are sufficient:
\begin{enumerate}
\item[(FDer3 left')]
For each {\em opfibration} $\alpha: I \rightarrow J$ in $\Dia$ and each $S \in \SSS(J)$ the functor
$\alpha^*$ between fibers (which are 1-categories by (FDer0 left))
\[ \DD(J)_{S} \rightarrow \DD(I)_{\alpha^*S} \]
has a left-adjoint $\alpha_!^{(S)}$.
\item[(FDer4 left')]
For each {\em opfibration} $\alpha: I \rightarrow J$ in $\Dia$, for each object $j \in J$, and for each $S \in \SSS(J)$
 the induced natural transformation of functors $\pr_{2,!}^{(S_j)} \pr_1^* \rightarrow j^* {\alpha}_!^{(S)}$ is an isomorphism. Here $\pr_1$ and $\pr_2$ are defined by the Cartesian square
 \[ \xymatrix{  I \times_{J} j \ar[r]^-{\pr_1} \ar[d]_{\pr_2}  & I \ar[d]^\alpha \\
\{j\} \ar@{^{(}->}[r]^j & J. \\
} \]

\end{enumerate}

We use the same notation for the axioms as in the case of usual fibered multiderivators because, in case that $\SSS$ is a usual pre-2-multiderivator they specialize to the familiar ones. 
Dually, we have the following axioms: 

\begin{enumerate}
\item[(FDer0 right)]
For each $I$ in $\Dia$ the morphism $p$ specializes to a 1-fibered multicategory with 1-categorical fibers.
It is, in addition, 2-fibered in the lax case and 2-opfibered in the oplax case. 
Furthermore, any {\em opfibration} $\alpha: I \rightarrow J$
in $\Dia$  induces a diagram
\[ \xymatrix{
\DD(J) \ar[r]^{\alpha^*} \ar[d] & \DD(I) \ar[d]\\
\SSS(J) \ar[r]^{\alpha^*} & \SSS(I) 
}\]
of 1-fibered and 2-(op)fibered multicategories, i.e.\@ the top horizontal functor maps Cartesian 1-morphisms w.r.t.\@ the $i$-th slot to Cartesian 1-morphisms w.r.t.\@ the $i$-th slot for any $i$ and maps (co)Cartesian 2-morphisms to (co)Cartesian 2-morphisms.
\item[(FDer3 right)]
For each functor $\alpha: I \rightarrow J$ in $\Dia$, and each $S \in \SSS(J)$ the functor
$\alpha^*$ between fibers (which are 1-categories by (FDer0 right))
\[ \DD(J)_{S} \rightarrow \DD(I)_{\alpha^*S} \]
has a right adjoint $\alpha_*^{(S)}$.
\item[(FDer4 right)]
For each morphism $\alpha: I \rightarrow J$ in $\Dia$,  for each object $j \in J$, and for each $S \in \SSS(J)$, for the 2-commutative square
\[ \xymatrix{  j \times_{/J} I \ar[r]^-\iota \ar[d]_{\alpha_j} \ar@{}[dr]|{\Nearrow^\mu} & I \ar[d]^\alpha \\
\{j\} \ar@{^{(}->}[r]^j & J \\
} \]
 the induced natural transformation of functors $ j^* {\alpha}_*^{(S)} \rightarrow \alpha_{j,*}^{(S_j)} (\SSS(\mu)(S))^\bullet \iota^*$ is an isomorphism.

\item[(FDer5 right)] For {\em each} functor $\alpha: I \rightarrow J$ in $\Dia$, and for each 1-morphism $\xi \in \Hom(S_1, \dots, S_n; T)$ in $\SSS(\cdot)$ for some $n\ge 1$, the natural transformation of functors
\[ \alpha_*^{(S_i)} (\alpha^*\xi)^{\bullet,i} (\alpha^*-, \overset{\widehat{i}}{\cdots}, \alpha^*-\ ;\ -) \cong  \xi^{\bullet,i} (-, \overset{\widehat{i}}{\cdots}, -\ ;\ \alpha_*^{(T)}-) \]
is an isomorphisms for all $i= 1, \dots, n$.
\end{enumerate}

There is similarly a weaker version of (FDer3/4 right) in which $\alpha$ has to be a fibration.

For {\em representable} pre-2-multiderivators (cf.\@ \ref{PARREPR2PREMULTIDER}) we get the following:

\begin{PROP}\label{PROP2MULTIDER}
If $\mathcal{D} \rightarrow \mathcal{S}$ is a 1-bifibration and 2-fibration of 2-multicategories with 1-categorical and bicomplete fibers then 
\begin{enumerate}
\item $\Dia^{\cor}(\DD^{\lax}) \rightarrow \Dia^{\cor}(\SSS^{\lax})$ is a 1-opfibration and 2-fibration,
\item $\Dia^{\cor}(\DD^{\oplax}) \rightarrow \Dia^{\cor}(\SSS^{\oplax})$ is a 1-fibration and 2-fibration. 
\end{enumerate}
If $\mathcal{D} \rightarrow \mathcal{S}$ is a 1-bifibration and 2-opfibration  of 2-multicategories with 1-categorical and bicomplete fibers then 
\begin{enumerate}
\item $\Dia^{\cor}(\DD^{\lax, {2-\op}}) \rightarrow \Dia^{\cor}(\SSS^{\lax, 2-\op})$ is a 1-fibration and 2-fibration.
\item $\Dia^{\cor}(\DD^{\oplax, 2-\op}) \rightarrow \Dia^{\cor}(\SSS^{\oplax, 2-\op})$ is a 1-opfibration and 2-fibration.
\end{enumerate}
\end{PROP}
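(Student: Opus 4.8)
The plan is to treat the four assertions as a single statement modulo the duality operations of Definition~\ref{DEFFIBDER2}: passing to $\mathcal{D}^{2-\op}\to\mathcal{S}^{2-\op}$ exchanges the 2-fibration and 2-opfibration hypotheses, and the functor $(-)^{2-\op}$ interchanges the lax and oplax variants. It therefore suffices to prove the first assertion of the first block, namely that for a 1-bifibration and 2-fibration $p\colon\mathcal{D}\to\mathcal{S}$ of 2-multicategories with 1-categorical and bicomplete fibers, the induced strict functor $\Dia^{\cor}(p)\colon\Dia^{\cor}(\DD^{\lax})\to\Dia^{\cor}(\SSS^{\lax})$ is a 1-opfibration and 2-fibration with 1-categorical fibers. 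By Definition~\ref{DEFFIBDER2} this is exactly the statement that the representable morphism $p^{\lax}\colon\DD^{\lax}\to\SSS^{\lax}$ is a lax left fibered multiderivator; since (Der1) and (Der2) hold tautologically for representable 2-pre-multiderivators, Theorem~\ref{MAINTHEOREMFIBDER2} is available as an axiomatic repackaging throughout.

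First I would record the pointwise input. As $p$ is in particular a 1-opfibration and 2-fibration of 2-multicategories, Proposition~\ref{PROPREPR}(2) shows that for every $I\in\Dia$ the functor $\DD^{\lax}(I)\to\SSS^{\lax}(I)$ is again a 1-opfibration and 2-fibration of 2-multicategories; its fibers are 1-categorical because those of $p$ are (fibre-wise modifications lying over an identity are built from the trivial 2-morphisms in the fibres $\mathcal{D}_S$). This is precisely (FDer0 left) for $p^{\lax}$, and, crucially, it implies that each induced functor on multimorphism categories $\Hom_{\DD^{\lax}(I)}(-;-)\to\Hom_{\SSS^{\lax}(I)}(-;-)$ is a fibration (from the 2-fibration structure) \emph{with discrete fibers} (from the 1-categoricity of the fibres).

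The 2-fibration property of $\Dia^{\cor}(p)$ then follows immediately. By Definition~\ref{PARDEFDIACORD2} the multimorphism categories of $\Dia^{\cor}(\DD^{\lax})$ are exactly the truncations $\tau_1(\Span^F_{\DD^{\lax}}(\dots))$, and the previous paragraph verifies the hypothesis of Lemma~\ref{PAREQSPANFIBERED}(2); hence the induced functors $\tau_1(\Span^F_{\DD^{\lax}})\to\tau_1(\Span^F_{\SSS^{\lax}})$ are fibrations with discrete fibers, which is the assertion that $\Dia^{\cor}(p)$ is a 2-fibration. Part~(1) of the same lemma identifies these morphism categories, up to equivalence, with the $\tau_1(\Span_{\cdots})$ attached to \emph{arbitrary} correspondences, which I would use to pass freely between the $\Span^F$ model (where $\beta$ is an opfibration and the $\alpha_i$ are fibrations) and general representatives.

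The remaining and genuinely hard point is the 1-opfibration property, i.e.\ the existence of coCartesian lifts of 1-morphisms (pushforward along multicorrespondences). Given a 1-morphism of $\Dia^{\cor}(\SSS^{\lax})$ I would first use Lemma~\ref{LEMMASPAN1}(3) to replace its representing correspondence by one lying in $\Span^F$; the chain of adjunctions supplied there transports coCartesianness back to the original representative. For such a representative $(A,\alpha_1,\dots,\alpha_n,\beta,\rho)$ and objects $\mathcal{E}_i$ over $S_i$ I would define the pushforward as $\beta_!\bigl(\rho_\bullet(\alpha_1^*\mathcal{E}_1,\dots,\alpha_n^*\mathcal{E}_n)\bigr)$, where $\rho_\bullet$ is the fibre-wise pushforward provided by the 1-opfibration $\DD^{\lax}(A)\to\SSS^{\lax}(A)$ of the second step and $\beta_!$ is the relative left Kan extension along the opfibration $\beta$, which exists because the fibres of $\mathcal{D}$ are cocomplete (this is where bicompleteness is used; the dual blocks use completeness to produce a 1-fibration instead). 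I expect the verification that this object, together with the evident structure 1-morphism, is coCartesian to be the main obstacle: it is equivalent to checking the base-change axiom (FDer4 left) and the projection-formula axiom (FDer5 left) for $p^{\lax}$, which in the representable case reduce to the pointwise colimit formula for Kan extensions in $\Fun^{\lax}(-,\mathcal{D})$ but must be carried out while correctly tracking the lax natural transformations and the connecting 2-morphisms $\Xi$ that parametrise the objects of $\Span_{\DD^{\lax}}$. Once (FDer0,3,4,5 left) are established, Theorem~\ref{MAINTHEOREMFIBDER2} confirms that $\Dia^{\cor}(p)$ is a 1-opfibration and 2-fibration, completing the first assertion; the remaining three follow by applying the same argument after the $2-\op$ and $\oplax$ twists, the $2-\op$ being arranged so that the relevant $\Hom$-functors are always fibrations (making Lemma~\ref{PAREQSPANFIBERED}(2) uniformly applicable), while the 1-(op)fibration alternates according to whether one invokes cocompleteness (left Kan extensions) or completeness (right Kan extensions) of the fibres.
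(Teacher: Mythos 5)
Your route is, in outline, the paper's own: the proof given there is literally ``Proposition~\ref{PROPREPR} plus the constructions of \cite[Proposition~4.1.26]{Hor15}'', i.e.\ level-wise fibration properties of the represented (op)lax 2-pre-multiderivators combined with pointwise Kan extensions supplied by bicompleteness of the fibers. Your packaging --- Lemma~\ref{PAREQSPANFIBERED}(2) for the 2-fibration property, and the coCartesian lift $\beta_!\bigl(\rho_\bullet(\alpha_1^*\mathcal{E}_1,\dots,\alpha_n^*\mathcal{E}_n)\bigr)$ verified via (FDer4/FDer5 left) and fed into Theorem~\ref{MAINTHEOREMFIBDER2} --- is exactly this construction, so for the lax assertion of the first block your sketch is sound.

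The reduction of the remaining three assertions, however, contains a genuine gap. First, the lax and oplax assertions \emph{within} one block are not exchanged by any duality: $(-)^{2-\op}$ only relates the first block to the second, and there is no $1$-op operation on 2-multicategories (this is precisely why ``left'' and ``right'' are independent notions throughout the paper), so ``it suffices to prove the first assertion'' must be withdrawn; assertion 2 needs its own argument, as you only half-concede at the end. Second, and more seriously, for the 1-fibration statements your recipe ``Proposition~\ref{PROPREPR} level-wise plus right Kan extensions $\alpha_{j,*}$'' is missing an ingredient. By Corollary~\ref{KORFIBDER2_2}(2), the Cartesian lift w.r.t.\ the $j$-th slot is computed as $\alpha_{j,*}\circ f^{\bullet,j}\circ(\dots)$, so one needs $\DD^{\oplax}(I)\to\SSS^{\oplax}(I)$ to be a 1-fibration of 2-\emph{multi}categories level-wise; Proposition~\ref{PROPREPR}(2) does not provide this, since its oplax clause covers 1-fibrations only of 2-\emph{categories} (its 2-multicategory clause concerns 1-opfibrations). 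As the appendix stresses, slot-wise Cartesian lifts (internal Homs) in functor categories are \emph{not} computed pointwise but by a limit construction over $I$ (cf.\ Proposition~\ref{PROPREPR}(3) and \cite[Proposition~4.1.6]{Hor15}); this is a second, independent use of completeness of the fibers, and it --- not merely the existence of right Kan extensions --- is what the 1-fibration assertions require.
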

\begin{proof}
This follows from Proposition~\ref{PROPREPR} doing the same constructions as in \cite[Proposition 5.1.26]{Hor15}.
\end{proof}

\begin{DEF}\label{DEFSTABLE}For (lax/oplax) fibered derivators over an (lax/oplax) pre-2-derivator $p: \DD \rightarrow \SSS$ and an object $S \in \SSS(I)$ we define its {\bf fiber} over $(I, S)$ as the (usual) derivator
\[ \DD_{I,S}: J \mapsto \DD(I \times J)_{\pr_2^*S}.  \]
We say that $p$ has {\bf stable fibers} if $\DD_{I,S}$ is stable for all $S \in \SSS(I)$ and for all $I$. 
\end{DEF}

\begin{PAR}
Let $\SSS$ be a pre-2-multiderivator.
To prove Theorem~\ref{MAINTHEOREMFIBDER2} we need some preparation to improve our understanding of the category $\Dia^{\cor}(\SSS)$.
We recall the results from \cite[Section~6]{Hor15b} indicating necessary changes in the constructions. In \cite{Hor15b} the statements have been proven only for pre-multiderivators (as opposed to pre-2-multiderivators).
\end{PAR}

\begin{PAR}\label{DEF3MORPH}
We will define three types of generating\footnote{``Generating'' in the sense that any 1-morphism in $\Dia^{\cor}(\SSS)$ is 2-isomorphic to a composition of these (cf.\@ Corollary~\ref{KORFIBDER2_2}).} 1-morphisms in $\Dia^{\cor}(\SSS)$. We first define them as objects in the categories
$\Span_{\SSS}(\dots)$ (without the restriction ${}^F$). 

\begin{enumerate}
\item[${[\beta^{(S)}]}$] for a functor $\beta: I \rightarrow J$ in $\Dia$ and an object $S \in \SSS(J)$, 
consists of the correspondence of diagrams
\[ \xymatrix{
& I \ar[rd]^{\beta}\ar@{=}[ld] \\
I & ; & J
} \]
and over it in $\tau_1(\Span_{\SSS}((I, \beta^*S ); (J, S)))$ the canonical correspondence given by the identity $\id_{\beta^*S}$.

\item[${[\alpha^{(S)}]'}$] for a functor $\alpha: I \rightarrow J$ in $\Dia$ and an object $S \in \SSS(J)$,
consists of the correspondence of diagrams 
\[ \xymatrix{
& I \ar[ld]_{\alpha}\ar@{=}[rd] \\
J & ; & I
} \]
and over it in $\tau_1(\Span_{\SSS}((J, S); (I, \alpha^*S )))$ the canonical correspondence given by the identity $\id_{\alpha^*S}$.

\item[${[f]}$] for a morphism $f \in \Hom_{\SSS(A)}(S_1, \dots, S_n; T)$, where $A$ is any diagram in $\Dia$, and $S_1, \dots, S_n, T$ are objects in $\SSS(A)$, is defined by the trivial correspondence of diagrams 
\[ \xymatrix{
& & & A \ar@{=}[llld] \ar@{=}[ld] \ar@{=}[rd] \\
A & \cdots & A &;  & A
} \]
together with $f$.  
\end{enumerate}
\end{PAR}

\begin{PAR}\label{PARDIACORSCAN1MOR}
Note that the correspondences of the last paragraph do not define 1-morphisms in $\Dia^{\cor}(\SSS)$ yet, as we defined it, because they are not always objects in the $\Span^F$ subcategory ($[\alpha^{(S)}]'$ is already, if $\alpha$ is a fibration; $[\beta^{(S)}]$ is, if $\beta$ is an opfibration; and $[f]$ is, if $n=0,1$, respectively). 

From now on, we denote by the same symbols $[\beta^{(S)}], [\alpha^{(S)}]', [f]$ chosen 1-morphisms in $\Dia^{\cor}(\SSS)$ which are isomorphic to those defined above in the $\tau_1$-categories (cf.\@ Lemma~\ref{PAREQSPANFIBERED}). Those are determined only up to 2-isomorphism in $\Dia^{\cor}(\SSS)$. 

For definiteness, we choose $[\beta^{(S)}]$ to be the correspondence 
\[ \xymatrix{
& I \times_{/J} J \ar[rd]^{\pr_2}\ar[ld]_{\pr_1} \\
I & & J
} \]
and over it in $\tau_1(\Span_{\SSS}((I, \beta^*S ); (J, S)))$ the morphism $\pr_1^*\beta^*S \rightarrow \pr_2^*S$ 
given by the natural transformation $\mu_\beta: \beta \circ \pr_1 \Rightarrow \pr_2$.
Similarly, we choose $[\alpha^{(S)}]'$ to be the correspondence 
\[ \xymatrix{
& J \times_{/J} I \ar[rd]^{\pr_2}\ar[ld]_{\pr_1} \\
J & & I
} \]
and over it in $\tau_1(\Span_{\SSS}((J, S), (I, \alpha^*S )))$ the morphism $\pr_1^* S \rightarrow \pr_2^* \alpha^*S$ 
given by the natural transformation $\mu_\alpha: \pr_1 \Rightarrow \alpha \circ  \pr_2$.
\end{PAR}

\begin{PAR}\label{DIACORSADJUNCTION}
For any $\alpha: I \rightarrow J$, and an object $S \in \SSS(J)$, we define a 2-morphism
\[ \epsilon: \id_{(I, \alpha^*S)} \Rightarrow [\alpha^{(S)}]' \circ  [\alpha^{(S)}] \]
given by the commutative diagrams
\[ \vcenter{\xymatrix{
& I \ar@{=}[rd]^{} \ar@{=}[ld] \ar[dd]^{\Delta} \\
I & & I \\
& I \times_{/J} J \times_{/J} I \ar[ru]_{\pr_3}\ar[lu]^{\pr_1} 
} } \qquad \vcenter{ \xymatrix{
\Delta^* \pr_1^*  \alpha^* S \ar@{=}[rrrr]^{} 
 &&&& \Delta^* \pr_3^* \alpha^* S  \ar@{=}[d] \\
\alpha^* S \ar@{=}[rrrr] \ar@{=}[u] &&&& \alpha^* S
} } \]

and we define a 2-morphism
\[ \mu:  [\alpha^{(S)}] \circ  [\alpha^{(S)}]' \Rightarrow \id_{(J,S)} \]
given by the (2-)commutative diagrams
\[ \vcenter{ \xymatrix{
& J \times_{/J} I \times_{/J} J  \ar[dd]|{\alpha \pr_2}  \ar[rd]^{\pr_3} \ar[ld]_{\pr_1}  \\
J \ar@{}[r]|{\overset{\mu_1}{\Rightarrow}} &  \ar@{}[r]|{\overset{\mu_2}{\Rightarrow}}  & J \\
& J \ar@{=}[ru]^{} \ar@{=}[lu]
} } \qquad  \vcenter{ \xymatrix{
\pr_2^* \alpha^*  S   \ar@{=}[rrr] &&&  \pr_2^* \alpha^*  S  \ar[d]^{\SSS(\mu_2)(S)} \\
\pr_1^* S \ar[rrr]_{\SSS(\mu_2)(S) \circ \SSS(\mu_1)(S)} \ar[u]^{\SSS(\mu_1)(S)}  &&& \pr_3^* S
}  } \]
Note that the right hand side diagram (although living in the 2-category $\SSS(J \times_{/J} I \times_{/J} J)$) is strictly commutative. 
\end{PAR}
\begin{PAR}\label{DIACORS2FUNCT}
A natural transformation $\nu: \alpha_1 \Rightarrow \alpha_2$ establishes a 2-morphism
\[ [\nu]': [\SSS(\nu)(S)] \circ [\alpha_1^{(S)}]' \Rightarrow [\alpha_2^{(S)}]'  \]
given by the 2-commutative diagrams: 
\[ \vcenter{ \xymatrix{
& J \times_{/J,\alpha_1} I \ar[rd]^{\pr_2'} \ar[ld]_{\pr_1'} \ar[dd]_{\widetilde{\nu}} \\
J & & I \\
& J \times_{/J,\alpha_2} I  \ar[ru]_{\pr_2}\ar[lu]^{\pr_1} 
} } \qquad  \vcenter{ \xymatrix{
\widetilde{\nu}^* \pr_1^* S \ar@{}[rrrrrd]|\Downarrow \ar[rrrrr]^-{\widetilde{\nu}^* \SSS(\mu_{\alpha_2})(S)} \ar@{=}[d]  & & & &  & \widetilde{\nu}^* \pr_2^* \alpha_2^* S \\
 (\pr_1')^*  S  \ar[rrr]_-{ \SSS(\mu_{\alpha_1})(S)} &&& (\pr_2')^* \alpha_1^* S  \ar[rr]_-{  (\pr_2')^* \SSS(\nu)(S)} &&  (\pr_2')^* \alpha_2^*S  \ar@{=}[u]  
} } \]
where the 2-morphism in the right hand side diagram is the inverse of the pseudo-functoriality of $\SSS: \Hom(J \times_{/J, \alpha_1} I, J) \mapsto \Hom(\SSS(J), \SSS(J \times_{/J, \alpha_1} I))$.
Note that we have the equation of natural transformations $(\pr_2'  \ast \nu) \circ  \mu_{\alpha_1} = \widetilde{\nu} \ast \mu_{\alpha_2}$. 
Here the $\mu_{\alpha_i}$ are as in \ref{PARDIACORSCAN1MOR}.

Similarly, a natural transformation $\nu: \beta_1 \Rightarrow \beta_2$ establishes a 2-morphism
\[ [\nu]:    [\beta_2^{(S)}] \circ [\SSS(\nu)(S)] \Rightarrow  [\beta_1^{(S)}]. \]
\end{PAR}

\begin{PAR}\label{DIACORS2COMMSQUARE}
Consider the diagrams from axiom (FDer3 left/right)
\[ \xymatrix{
I \times_{/J} {j} \ar[r]^-{\iota_1} \ar[d]_{p_1} \ar@{}[rd]|{\Swarrow^{\mu_1}}  & I \ar[d]^\alpha \\
j \ar@{^{(}->}[r] & J
} \quad \xymatrix{
{j} \times_{/J} I  \ar[r]^-{\iota_2} \ar[d]_{p_2} \ar@{}[rd]|{\Nearrow^{\mu_2}}  & I \ar[d]^\alpha \\
j \ar@{^{(}->}[r] & J
} \]
By the construction in \ref{DIACORS2FUNCT}, we get a canonical 2-morphism
\begin{equation}\label{2commsquare_l}
 [\SSS(\mu_1)(S)]   \circ [\iota_1^{(\alpha^*S)}]' \circ [\alpha^{(S)}]'  \Rightarrow  [p_1^{(S_j)}]' \circ [j^{(S)}]',
\end{equation}
and a canonical 2-morphism
\begin{equation}\label{2commsquare_r}
 [\alpha^{(S)}] \circ   [\iota_2^{(\alpha^*S)}] \circ [\SSS(\mu_2)(S)] \Rightarrow  [j^{(S)}] \circ [p_2^{(S_j)}],
\end{equation}
respectively. Here $S_j$ denotes $j^*S$ where $j$, by abuse of notation, also denotes the inclusion of the one-element category $\{j\}$ into $J$. 
\end{PAR}

\begin{PAR}\label{TIMESK}
Let  $\xi$ be any 1-morphism in $\Dia^{\cor}(\SSS)$ given by
\[ \xymatrix{
& && A \ar[rd]^{\delta} \ar[ld]^{\gamma_n} \ar[llld]_{\gamma_1}   \\
I_1 & \cdots &  I_n & ;  & J
} \]
together with a morphism 
\[ f_\xi  \in \Hom_{\SSS(A)}(\gamma_1^*S_1, \dots, \gamma_n^*S_n; \delta^* T). \]
We define a 1-morphism $\xi \times K$ in $\Dia^{\cor}(\SSS)$ by
\[ \xymatrix{
& && A \times K \ar[rd]^{\delta \times \id} \ar[ld]^{\gamma_n \times \id} \ar[llld]_{\gamma_1 \times \id}   \\
I_1 \times K & \cdots &  I_n \times K & & J \times K
} \]
and 
\[ f_{\xi \times K} := \pr_1^* f_\xi  \in \Hom_{\SSS(A)}( \pr_1^* \gamma_1^*S_1, \dots, \pr_1^*  \gamma_n^*S_n; \pr_1^* \delta^* T). \]
Note that the here defined $\xi \times K$ does not necessarily lie in the category $\Span^F_\SSS(\dots)$. Hence we denote by $\xi \times K$ any isomorphic (in the $\tau_1$-truncation) correspondence which does
lie in $\Span^F_\SSS(\dots)$. We also define a correspondence 
 $\xi \times_j K$ in $\Dia^{\cor}(\SSS)$ by
\[ \xymatrix{
& && &&A \times K \ar[rd]^{\delta \times \id} \ar[ld]^{\gamma_n \pr_1} \ar[llld]^{\gamma_j \times K}  \ar[llllld]_{\gamma_1 \pr_1}   \\
I_1  & \cdots&  I_j \times K & \cdots &  I_n  & & J \times K
} \]
and 
\[ f_{\xi \times_j K} := \pr_1^* \xi  \in \Hom_{\SSS(A)}( \pr_1^* \gamma_1^*S_1, \dots, \pr_1^*  \gamma_n^*S_n; \pr_1^* \delta^* T). \]
The here defined $\xi \times_j K$ does already lie in the category $\Span^F_\SSS(\dots)$. 
\end{PAR}

\begin{LEMMA}\label{LEMMAPROPDIACORS}
With the notation of \ref{DEF3MORPH}:
\begin{enumerate}
\item The 2-morphisms of \ref{DIACORSADJUNCTION}
\[ \epsilon: \id_{(I, \alpha^*S)} \Rightarrow [\alpha^{(S)}]' \circ  [\alpha^{(S)}] \qquad
 \mu:  [\alpha^{(S)}] \circ  [\alpha^{(S)}]' \Rightarrow \id_{(J, S)} \]
 establish an adjunction between $[\alpha^{(S)}]$ and $[\alpha^{(S)}]'$ in the 2-category $\Dia^{\cor}(\SSS)$. 
 \item The exchange 2-morphisms of (\ref{2commsquare_l}) and and of (\ref{2commsquare_r}) w.r.t.\@ the adjunction of 1.\@, namely
\[        [p_1^{(S_j)}]  \circ [\SSS(\mu_1)(S)] \circ  [\iota_1^{(\alpha^*S)}]' \Rightarrow  [j^{(S)}]' \circ [\alpha^{(S)}]   \]
and
\[  [\iota_2^{(\alpha^*S)}] \circ [\SSS(\mu_2)(S)]   \circ [p_2^{(S_j)}]' \Rightarrow  [\alpha^{(S)}]'  \circ [j^{(S)}]      \]
are 2-isomorphisms. 
\item For any $\alpha: K \rightarrow L$ and $\xi$ as in \ref{TIMESK} there are natural 2-isomorphisms
\begin{equation} \label{commtensorpullback1}
 (\xi \times K) \circ ([I_1 \times \alpha^{(\pr_1^*S_1)}]', \dots, [I_n\times\alpha^{(\pr_1^*S_n)}]')  \Iiso [J\times\alpha^{(\pr_1^*T)}]'  \circ (\xi \times L)   
\end{equation}
and
\begin{equation}  \label{commtensorpullback2}
   (\xi \times_j K) \circ_j [I_j \times \alpha^{(\pr_1^*S_j)}]'  \Iiso   [J \times \alpha^{(\pr_1^*T)}]' \circ (\xi \times_j L). 
\end{equation} 
\item 
The exchange of (\ref{commtensorpullback1}) w.r.t.\@ the adjunction of 1.\@, and w.r.t.\@ the $j$-th slot, namely
\[ [J \times \alpha^{(\pr_1^*T)}] \circ (\xi \times K)   \circ ([I_1 \times \alpha^{(\pr_1^*S_1)}]', \dots, \id, \dots,  [I_n \times \alpha^{(\pr_1^*S_n)}]')  \Rightarrow (\xi \times L) \circ_j [I_j \times \alpha^{(\pr_1^*S_j)}] \]
is an isomorphism if $\alpha$ is an opfibration. The exchange of (\ref{commtensorpullback2}) w.r.t.\@ the adjunction of 1.\@, namely
\[ [J \times \alpha^{(\pr_1^*T)}] \circ (\xi \times_j K)  \Rightarrow (\xi \times_j L) \circ_j [I_j \times \alpha^{(\pr_1^*S_j)}] \]
is an isomorphism for any $\alpha$. 
\item 
For any $f \in \Hom_{\SSS(J)}(S_1, \dots, S_n; T)$ and $\alpha: I \rightarrow J$ there is a natural 2-isomorphism
\begin{equation} \label{commfpullback}
[ \alpha^* f ]  \circ ([\alpha^{(S_1)}]', \dots, [\alpha^{(S_n)}]') \Iiso    [\alpha^{(T)}]' \circ [ f ] .
\end{equation}
\item The exchange of (\ref{commfpullback}) w.r.t.\@ the adjunction of 1.\@, and w.r.t.\@ the $j$-th slot, namely
\[  [\alpha^{(T)}] \circ [ \alpha^* f ] \circ ([\alpha^{(S_1)}]', \dots, \id, \dots,  [\alpha^{(S_n)}]')  \Rightarrow  [ f ] \circ_j  [\alpha^{(S_j)}] \]
is an isomorphism if $\alpha$ is an opfibration.
\end{enumerate}
\end{LEMMA}
\begin{proof}
The proof is a repetition of the proof of \cite[Lemma~5.7]{Hor15b} adapted to take care of the 2-morphisms in $\SSS$. 
\end{proof}

\begin{PAR}\label{PARPREPMORPHDIASCOR2}
Let $\DD \rightarrow \SSS$ be a morphism of  pre-multiderivators satisfying (Der1) and (Der2).  
Consider the strict 2-functor
\[ \Dia^{\cor}(\DD) \rightarrow  \Dia^{\cor}(\SSS)   \]
{\em and assume that it is a 1-opfibration, and 2-fibration with 1-categorical fibers.}
The fiber over a pair $(I, S)$ is just the fiber $\DD(I)_S$ of the usual functor $\DD(I) \rightarrow \SSS(I)$. 
The 1-opfibration and 2-fibration can be seen (via the construction of \cite[Proposition~2.17]{Hor15b}) as a pseudo-functor of 2-multicategories
\[ \Psi: \Dia^{\cor}(\SSS) \rightarrow \mathcal{CAT}. \]
\end{PAR}
\begin{PAR}\label{PARPREPMORPHDIASCOR2R}
If
\[ \Dia^{\cor}(\DD) \rightarrow  \Dia^{\cor}(\SSS)  \]
is a {\em 1-fibration}, and 2-fibration with 1-categorical fibers there is still an associated pseudo-functor of 2-categories (not of 2-multicategories)
\[ \Phi: \Dia^{\cor}(\SSS)^{1-\op, 2-\op} \rightarrow \mathcal{CAT}. \]
\end{PAR}

\begin{PROP}\label{PROPBASICDIACORS}With the notation of \ref{DEF3MORPH}:
\begin{enumerate}
\item 
Assume that
\[ \Dia^{\cor}(\DD) \rightarrow  \Dia^{\cor}(\SSS)   \]
is a {\em 1-opfibration}, and 2-fibration with 1-categorical fibers. 
Then the functor $\Psi$ of \ref{PARPREPMORPHDIASCOR2} maps (up to isomorphism of functors)
\begin{eqnarray*}
{ [\alpha^{(S)}]' } &\mapsto& (\alpha^{(S)})^*   \\
{ [\beta^{(S)}]  } &\mapsto &  \beta_!^{(S)} \\
{ [f] } &  \mapsto  & f_\bullet 
\end{eqnarray*}
where $(\alpha^{(S)})^*$ is $\alpha^*$ restricted to a functor $\DD(J)_S \rightarrow \DD(I)_{\alpha^*S}$ and
where $\beta_!^{(S)}$ is a left adjoint of $(\beta^{(S)})^*$, and where $f_\bullet$ is a functor determined by 
\[\Hom_{\DD(I), f}(\mathcal{E}_1, \dots, \mathcal{E}_n; \mathcal{F}) \cong \Hom_{\DD(I)_T}(f_\bullet(\mathcal{E}_1, \dots, \mathcal{E}_n); \mathcal{F}). \]

\item Assume that
\[ \Dia^{\cor}(\DD) \rightarrow  \Dia^{\cor}(\SSS) \]
is a {\em 1-fibration}, and 2-fibration with 1-categorical fibers. 

Then pullback functors\footnote{In the case of $[\alpha^{(S)}]$ and $[\beta^{(S)}]'$ these are $\Phi([\alpha^{(S)}])$ and $\Phi([\beta^{(S)}]')$.} in the sense of \cite[Definition~2.12 2.]{Hor15b} w.r.t.\@ the following 1-morphisms in $\Dia^{\cor}(\SSS)$ are given by 
\begin{eqnarray*}
{ [\beta^{(S)}]  } &\mapsto &  (\beta^{(S)})^* \\
{ [\alpha^{(S)}]' } &\mapsto&  \alpha_*^{(S)}  \\
{ [f] } &  \mapsto  & f^{\bullet, j}  \qquad \text{pullback w.r.t.\@ the $j$-th slot.}
\end{eqnarray*}
where $\alpha_*^{(S)}$ is a right adjoint of $(\alpha^{(S)})^*$ and $f^{\bullet, j}$ is a functor determined by 
\[ \Hom_{\DD(I), f}(\mathcal{E}_1, \dots, \mathcal{E}_n; \mathcal{F}) \cong \Hom_{\DD(I)_T}(\mathcal{E}_j, f^{\bullet, j}(\mathcal{E}_1, \overset{\widehat{j}}{\dots}, \mathcal{E}_n; \mathcal{F})). \] 
\end{enumerate}
\end{PROP}

\begin{proof}\cite[Proposition~6.10]{Hor15b}
\end{proof}

\begin{KOR}\label{KORFIBDER2_2}
Assuming the conditions of \ref{PARPREPMORPHDIASCOR2},
consider any correspondence 
\[ \xi'   \in \Span_{\SSS}((I_1, S_1), \dots, (I_n, S_n); (J, T)) \] consisting of 
\[ \xymatrix{ &&&A \ar[dlll]_{\alpha_1} \ar[dl]^{\alpha_n}  \ar[dr]^{\beta} \\ 
I_1 & \cdots & I_n &; & J
} \]
and a 1-morphism 
\[ f \in \Hom(\alpha_1^*S_1, \dots, \alpha_n^*S_n; \beta^*T) \]
in $\SSS(A)$.  
\begin{enumerate}
\item Over any 1-morphism  $\xi$ in $\Dia^{\cor}(\SSS)$, which is isomorphic to $\xi'$, a corresponding pushforward functor between
fibers (which is $\Psi(\xi')$ in the discussion \ref{PARPREPMORPHDIASCOR2}) is given (up to natural isomorphism) by the composition: 
\[ \beta_!^{(T)} \circ f_{\bullet} \circ (\alpha_1^*, \dots, \alpha_n^*). \]
\item Over any 1-morphism  $\xi$ in $\Dia^{\cor}(\SSS)$, which is isomorphic to $\xi'$, a pullback functor w.r.t.\@ any slot $j$ between
fibers (which is $\Phi(\xi')$ in the discussion \ref{PARPREPMORPHDIASCOR2R} if $\xi$ is a 1-ary 1-morphism) is given (up to natural isomorphism) by the composition: 
\[ \alpha_{j,*}^{(S_j)} \circ f^{\bullet,j} \circ (\alpha_1^*, \overset{\widehat{j}}{\dots},  \alpha_n^*; \beta^*). \]
\end{enumerate}
\end{KOR}
\begin{proof}\cite[Corollary~6.11]{Hor15b}
\end{proof}

We are now ready to give the
\begin{proof}[Proof of Theorem~\ref{MAINTHEOREMFIBDER2}] This is again a small adaption of the proof of \cite[Main~theorem~6.12]{Hor15b} to include the 2-morphisms in the base $\SSS$.
We concentrate on the lax left case, the other cases are shown completely analogously.

 We first show that $\Dia^{\cor}(\DD) \rightarrow \Dia^{\cor}(\SSS)$ is a 1-opfibration and a 2-fibration, if $\DD \rightarrow \SSS$ satisfies 
 (FDer0 left), (FDer3--4 left'), and (FDer5 left).
By (FDer0 left) $\DD(I) \rightarrow \SSS(I)$ is a 1-opfibration and 2-fibration with 1-categorical fibers and we have already seen that  this implies that 
$\Dia^{\cor}(\DD) \rightarrow \Dia^{\cor}(\SSS)$ is 2-fibered as well (cf.\@ Lemma~\ref{PAREQSPANFIBERED}).

 Let $(x,f)$ be a 1-morphism in $\Dia^{\cor}(\SSS)$ where $x=(A; \alpha_{A,1}, \dots, \alpha_{A,n}; \beta_A)$ is a correspondence in $\Span^F(I_1, \dots, I_n; J)$ and  
 \[ f \in \Hom_{\SSS(A)}(\alpha_{A,1}^*S_1, \dots, \alpha_{A,n}^*S_n; \beta_A^*T).  \]
 In $\Dia^{\cor}(\DD)$ we have the following composition of isomorphisms of sets (identifying small discrete categories with their set of isomorphism classes):
\begin{eqnarray*} 
&& \Hom_{\Dia^{\cor}(\DD),(x,f)}((I_1, \mathcal{E}_1), \dots, (I_n, \mathcal{E}_n); (J, \mathcal{F})) \\
&\cong& \Hom_{\DD(A),f}(\alpha_{A,1}^*\mathcal{E}_1, \dots, \alpha_{A,n}^*\mathcal{E}_n; \beta_A^* \mathcal{F}) \\
&\cong& \Hom_{\DD(A),\id_{\beta_A^*T}}( f_\bullet(\alpha_{A,1}^*\mathcal{E}_1, \dots, \alpha_{A,n}^*\mathcal{E}_n); \beta_A^* \mathcal{F}) \\
&\cong& \Hom_{\DD(A),\id_{T}}( \beta_{A,!}^{(T)} f_\bullet(\alpha_{A,1}^*\mathcal{E}_1, \dots, \alpha_{A,n}^*\mathcal{E}_n);  \mathcal{F}) \\
&\cong& \Hom_{\Dia^{\cor}(\DD),\id_{(J, T)}}( (J, \beta_{A,!}^{(T)} f_\bullet(\alpha_{A,1}^*\mathcal{E}_1, \dots, \alpha_{A,n}^*\mathcal{E}_n));  (J, \mathcal{F}))
\end{eqnarray*}
using (FDer0 left) and (FDer3 left).
One checks that this composition is induced by the composition in $\Dia^{\cor}(\DD)$ with a 1-morphism  in
\[  \Hom_{(x,f)}((I_1, \mathcal{E}_1), \dots, (I_n, \mathcal{E}_n); (J, \beta_{A,!}^{(T)} f_\bullet(\alpha_{A,1}^*\mathcal{E}_1, \dots, \alpha_{A,n}^*\mathcal{E}_n))), \]
which depends only on $(x, f)$ and not on $\mathcal{F}$.
Hence this 1-morphism is weakly coCartesian.

Note that we write $\Hom_{\Dia^{\cor}(\DD),(x,f)}$ for the category of 1-morphisms which map to $(x,f)$ in $\Dia^{\cor}(\SSS)$ and those 2-morphisms that map to $\id_{(x,f)}$ in $\Dia^{\cor}(\SSS)$. 
 
It remains to be shown that the composition of weakly coCartesian 1-morphisms is weakly coCartesian (cf.\@ \cite[Proposition~2.7]{Hor15b}). Let
$(y,g)$ be another 1-morphism, composable with $f$, with $y=(B; \alpha_{B,1}, \dots, \alpha_{B,m}; \beta_B)$ in $\Span^F(J_1, \dots, J_m; K)$ and
 \[ g \in \Hom_{\SSS(B)}(\alpha_{B,1}^*T_1, \dots, \alpha_{B,m}^*T_m; \beta_{B}^*U) . \]
Setting $J_i:=J$ and $T_i:=T$ the composition of $x$ and $y$ w.r.t.\@ the $i$-th slot is the correspondence:
 \[ \xymatrix{
&&&&& A \times_{J_i} B  \ar[lld]_{\pr_1} \ar[rrd]^{\pr_2} \\
&&& A \ar[llld] \ar[ld] \ar[rrrd]^{\beta_A} &&&&  B \ar[llld] \ar[ld]^{\alpha_{B,i}}  \ar[rd] \ar[rrrd] \\
I_1 & \cdots & I_n & ; & J_1 & \cdots & J_i & \cdots & J_m & ; & K
} \]
The composition of $g$ and $f$ is the morphism
\[ \pr_2^*g \circ_{i} \pr_1^* f  \]
\[ \in  \Hom(\pr_2^*\alpha_{B,1}^*T_1,\dots,\underbrace{\pr_1^*\alpha_{A,1}^*S_1, \dots, \pr_1^*\alpha_{A,n}^*S_n}_{\text{at }i},\dots,\pr_2^*\alpha_{B,m}^*T_{m}; \pr_2^*\beta_{B}^*U)  \]
We have to show that the natural map
\begin{eqnarray*} 
&&\beta_{B,!}^{(U)} g_\bullet(\alpha_{B,1}^*\mathcal{F}_{1}, \dots,  \underbrace{\alpha_{B,i}^* \beta_{A,!}^{(T_i)} f_\bullet (\alpha_{A,1}^*\mathcal{E}_1, \dots, \alpha_{A,n}^*\mathcal{E}_n)}_{\text{at $i$}}, \dots  \alpha_{B,m}^*\mathcal{F}_m) \\
&\rightarrow &\beta_{B,!}^{(U)} \pr_{2,!} (\pr_2^*g \circ_{i} \pr_1^* f)_\bullet  (\pr_2^* \alpha_{B,1}^*\mathcal{F}_{1}, \dots, \underbrace{\pr_1^* \alpha_{A,1}^* \mathcal{E}_1, \dots, \pr_1^* \alpha_{A,n}^*\mathcal{E}_n}_{\text{at $i$}}, \dots \pr_2^* \alpha_{B,m}^*\mathcal{F}_m)
 \end{eqnarray*}
 is an isomorphism.  It is the composition of the following morphisms which are all isomorphisms respectively by (FDer4 left) using \cite[Proposition~2.3.23.\@ 2.]{Hor15}, by (FDer5 left) observing that $\pr_2$ is a opfibration, by the second part of (FDer0 left) for $\pr_1$, and by the first part of (FDer0 left) in the form that the composition of coCartesian morphisms is coCartesian.
\begin{eqnarray*} 
&&\beta_{B,!} g_\bullet(\alpha_{B,1}^*\mathcal{F}_{1}, \dots,  \underbrace{\alpha_{B,i}^* \beta_{A,!}^{(T_i)} f_\bullet (\alpha_{A,1}^*\mathcal{E}_1, \dots, \alpha_{A,n}^*\mathcal{E}_n)}_{\text{at $i$}}, \dots,  \alpha_{B,m}^*\mathcal{F}_m) \\
&\rightarrow& \beta_{B,!} g_\bullet(\alpha_{B,1}^*\mathcal{F}_{1}, \dots,  \underbrace{\pr_{2,!}^{(\alpha_{B,i}^* T_i)} \pr_1^* f_\bullet (\alpha_{A,1}^*\mathcal{E}_1, \dots, \alpha_{A,n}^*\mathcal{E}_n)}_{\text{at $i$}}, \dots,  \alpha_{B,m}^*\mathcal{F}_m) \\
&\rightarrow& \beta_{B,!} \pr_{2,!}^{(\alpha_{B,i}^* T_i)}  (\pr_{2}^*g)_\bullet(\pr_{2}^* \alpha_{B,1}^*\mathcal{F}_{1}, \dots,  \underbrace{\pr_1^* f_\bullet (\alpha_{A,1}^*\mathcal{E}_1, \dots, \alpha_{A,n}^*\mathcal{E}_n)}_{\text{at $i$}}, \dots,  \pr_{2}^*\alpha_{B,m}^*\mathcal{F}_m) \\
&\rightarrow& \beta_{B,!} \pr_{2,!}^{(\alpha_{B,i}^* T_i)}  (\pr_{2}^*g)_\bullet(\pr_{2}^* \alpha_{B,1}^*\mathcal{F}_{1}, \dots,  \underbrace{ (\pr_1^*f)_\bullet (\pr_1^* \alpha_{A,1}^*\mathcal{E}_1, \dots, \pr_1^*\alpha_{A,n}^*\mathcal{E}_n)}_{\text{at $i$}}, \dots,  \pr_{2}^*\alpha_{B,m}^*\mathcal{F}_m) \\
&\rightarrow& \beta_{B,!} \pr_{2,!}^{(\alpha_{B,i}^* T_i)}  (\pr_2^*g \circ_{i} \pr_1^* f)_\bullet  (\pr_2^* \alpha_{B,1}^*\mathcal{F}_{1}, \dots, \underbrace{ \pr_1^* \alpha_{A,1}^* \mathcal{E}_1, \dots, \pr_1^* \alpha_{A,n}^*\mathcal{E}_n}_{\text{at $i$}}, \dots, \pr_2^* \alpha_{B,m}^*\mathcal{F}_m).
 \end{eqnarray*}
 
 Now we proceed to prove the converse, hence we assume that $\Dia^{\cor}(\DD) \rightarrow \Dia^{\cor}(\SSS)$ is a 1-opfibration and have to show all axioms of a left fibered derivator:

(FDer0 left) 
 First we have an obvious pseudo-functor of 2-multicategories
 \begin{eqnarray*}
  F: \SSS(A) &\hookrightarrow& \Dia^{\cor}(\SSS) \\
   S & \mapsto & (A, S) \\
    f & \mapsto & [f] 
 \end{eqnarray*}
 By \cite[Proposition~2.26]{Hor15b} the pull-back $F^*\Dia^{\cor}(\DD) \rightarrow \SSS(A)$ in the sense of \cite[2.25]{Hor15b} is 1-opfibered and 2-fibered if $\Dia^{\cor}(\DD) \rightarrow \Dia^{\cor}(\SSS)$ is 1-opfibered and 2-fibered. 
 To show that $\DD(I) \rightarrow \SSS(I)$ is a 1-opfibration and 2-fibration of multicategories, it thus suffices to show that the pull-back $F^*\Dia^{\cor}(\DD)$ is equivalent to $\DD(I)$ over $\SSS(I)$. 
 The class of objects of $F^*\Dia^{\cor}(\DD)$ is by definition isomorphic to the class of objects of $\DD(I)$. Therefore we are left to show that there are equivalences of categories (compatible with composition)
 \[ \Hom_{\DD(I),f}(\mathcal{E}_1, \dots, \mathcal{E}_n; \mathcal{F}) \rightarrow \Hom_{F^*\Dia^{\cor}(\DD),f}(\mathcal{E}_1, \dots, \mathcal{E}_n; \mathcal{F})  \]
 for any 1-morphism $f \in \Hom_{\SSS(I)}(S_1, \dots, S_n; T)$, where $\mathcal{E}_i$ is an object of $\DD(I)$ over $S_i$ and $\mathcal{F}$ is an object over $T$. 
Note that the left-hand side is a discrete category. 
We have a 2-Cartesian diagram of categories
\[ \xymatrix{
\Hom_{F^*\Dia^{\cor}(\DD),f}(\mathcal{E}_1, \dots, \mathcal{E}_n; \mathcal{F}) \ar[r] \ar[d] & \Hom_{\Dia^{\cor}(\DD)}((I,\mathcal{E}_1), \dots, (I, \mathcal{E}_n); (I, \mathcal{F})) \ar[d] \\
\{f\} \ar[r]^-F & \Hom_{\Dia^{\cor}(\SSS)}((I,S_1), \dots, (I,S_n); (I,T))
} \]
Since the right vertical morphism is a fibration (cf.\@ Lemma~\ref{PAREQSPANFIBERED}) the diagram is also Cartesian (cf.\@ \cite[Lemma~2.2]{Hor15b}). Futhermore by Lemma~\ref{PAREQSPANFIBERED} the right vertical morphism is equivalent to 
\[ \xymatrix{
  \tau_1(\Span_{\DD}((I, \mathcal{E}_1), \dots, (I, \mathcal{E}_n); (I, \mathcal{F}))) \ar[d] \\
  \tau_1(\Span_{\SSS}((I, S_1), \dots, (I, S_n); (I, T)))
} \]
(Here $\Span_{\DD}^F(\dots)$ was changed to $\Span_{\DD}(\dots)$ and similarly for $\Span_{\SSS}^F(\dots)$.)

In the category $\tau_1(\Span_{\SSS}((I_1, S_1), \dots, (I_n, S_n); (J, T)))$, the object $F(f)$ is isomorphic to $f$ over the trivial correspondence $(\id_I, \dots, \id_I; \id_I)$ whose fiber in $\tau_1(\Span_{\DD}((I, \mathcal{E}_1), \dots, (I, \mathcal{E}_n); (I, \mathcal{F})))$ is precisely the discrete category $\Hom_{\DD(I),f}(\mathcal{E}_1, \dots, \mathcal{E}_n; \mathcal{F})$.
The remaining part of (FDer0 left) will be shown below. 
 
Since we have a 1-opfibration and 2-fibration we can equivalently
see the given datum as a pseudo-functor
\[ \Psi: \Dia^{\cor}(\SSS) \rightarrow \mathcal{CAT} \]
and by Proposition~\ref{PROPBASICDIACORS}, $\Psi$ maps
$[\alpha^{(S)}]$ to a functor natural isomorphic to $\alpha^*: \DD(J)_S \rightarrow \DD(I)_{\alpha^*S}$. We have the freedom to 
choose $\Psi$ in such a way that it maps $[\alpha^{(S)}]$ precisely to $\alpha^*$. 

Axiom (FDer3 left) follows from Lemma~\ref{LEMMAPROPDIACORS}, 1.\@ stating that $[\alpha^{(S)}]$ has a left adjoint $[\alpha^{(S)}]'$ in the category $\Dia^{\cor}(\SSS)$ (cf.\@ also Proposition~\ref{PROPBASICDIACORS}). 

Axiom (FDer4 left) follows by applying $\Psi$ to the (first) 2-isomorphism of Lemma~\ref{LEMMAPROPDIACORS}, 2.

Axiom (FDer5 left) follows by applying $\Psi$ to the 2-isomorphism of Lemma~\ref{LEMMAPROPDIACORS}, 6.

The remaining part of (FDer0 left), i.e.\@ that $\alpha^*$ maps coCartesian arrows to coCartesian arrows follows by applying $\Psi$ to the 2-isomorphism of Lemma~\ref{LEMMAPROPDIACORS}, 5.
\end{proof}

\section{Derivator six-functor-formalisms}\label{DERSSF}

The main purpose for introducing the more general notion of fibered multiderivator over pre-2-multiderivators (as opposed to those over usual pre-multiderivators) is that
it provides the right framework to study any kind of {\em derived} six-functor-formalism.

We recall from \cite[Definition~3.1]{Hor15b}:

\begin{DEF}\label{DEFSCOR}
We define the {\bf 2-multicategory $\mathcal{S}^{\cor}$ of correspondences in $\mathcal{S}$}  to be the following 2-multicategory. 
\begin{enumerate}
\item The objects are the objects of $\mathcal{S}$.
\item The 1-morphisms in $\Hom(S_1, \dots, S_n; T)$ are the (multi)correspondences\footnote{as usual, $n=0$ is allowed.}
\begin{equation}\label{excor2}
 \vcenter{ \xymatrix{ &&&U \ar[dlll]_{\alpha_1} \ar[dl]^{\alpha_n}  \ar[dr]^{\beta} \\ 
S_1 & \cdots & S_n &;& T.
} } \end{equation}

\item The 2-morphisms $(U, \alpha_1, \dots, \alpha_n, \beta) \Rightarrow (U', \alpha_1', \dots, \alpha_n', \beta')$ are the {\em iso}morphisms $\gamma: U \rightarrow U'$ such that in
\begin{equation}\label{scor2morph} \vcenter{ \xymatrix{ 
&&&U \ar[dlll]_{\alpha_1} \ar[dl]^{\alpha_n} \ar[dd]^\gamma \ar[dr]^{\beta} \\ 
S_1 & \cdots & S_n && T \\
&&&U' \ar[ulll]^{\alpha_1'} \ar[ul]_{\alpha_n'}  \ar[ur]_{\beta'}
} } \end{equation}
all triangles are commutative. 
\item The composition is given by the fiber product in the following way: the correspondence 
\[ \xymatrix{
&&&&& U \times_{T_i} V  \ar[lld] \ar[rrd] \\
&&& U \ar[llld] \ar[ld] \ar[rrrd]^{\beta_U} &&&&  V \ar[llld] \ar[ld]^{\alpha_{V,i}}  \ar[rd] \ar[rrrd] \\
S_1 & \cdots & S_n & ; & T_1 & \cdots & T_i & \cdots & T_m & ; & W
} \]
in $\Hom(T_1, \dots, T_{i-1}, S_1, \dots, S_n, T_{i+1},  \dots, T_m; W)$
is the composition w.r.t.\@ the $i$-th slot of the left correspondence in  $\Hom(S_1, \dots, S_n; T_i)$ and
the right correspondence in $\Hom(T_1, \dots, T_m; W)$. 
\end{enumerate}
\end{DEF}

The 2-multicategory $\mathcal{S}^{\cor}$ is symmetric, representable
(i.e.\@ opfibered over $\{\cdot\}$), closed (i.e.\@ fibered over $\{\cdot\}$), every object is self-dual, with tensor product and internal Hom {\em both} given by the product $\times$ in $\mathcal{S}$ and having as unit
the final object of $\mathcal{S}$. 

\begin{DEF}
We define also the larger category $\mathcal{S}^{\cor,G}$ where in addition every morphism $\gamma: U \rightarrow U'$, such that in (\ref{scor2morph}) all triangles commute, is a 2-morphism (i.e.\@ $\gamma$ does not necessarily have to be an isomorphism). 
\end{DEF}

\begin{BEM}\label{BEMAFP}
In reality the above definition gives only bimulticategories because the formation of fiber products is only associative up to isomorphism. One can, however, enlarge the class of objects adjoining strictly associative fiber products.  
We sketch the precise construction of an equivalent 2-multicategory with is strictly symmetric as well. 
Consider the following class of objects, called {\bf abstract fiber products}. An abstract fiber product is a finite unoriented tree (in the sense of graph theory). Each vertex $v$ and each edge $e$ has an associated object $X_v$, resp.\@ $X_e$, in $\mathcal{S}$. 
For each edge $e$ there are morphisms $X_v \rightarrow X_e \leftarrow X_{v'}$ from the objects corresponding to the vertices of the edge. A {\em morphism} from such on object $X$ to an object $S \in \mathcal{S}$ is given 
by the choice of a vertex $v$ and a morphism $X_v \rightarrow S$. From a diagram $X \rightarrow S \leftarrow Y$ where $X$ and $Y$ are abstract fiber products and $S \in \mathcal{S}$, an abstract fiber product (called concatenation) can be formed,
adding a new edge with associated object $S$ to the disjoint union of $X$ and $Y$. Each such abstract fiber product has a {\em reduced form}\footnote{This construction has been stated first in a preliminary version of \cite{Hor17} with a wrong reduction procedure including erroneously also reductions at vertices of higher order. We thank René Recktenwald for pointing out the mistake. } in which repeatedly every subgraph of the form 
\[ X_v \rightarrow X_e = X_{v'} \rightarrow X_{e'}  \leftarrow X_{v''} \]
(where $e$ and $e'$ are the only edges at $v'$) is replaced by
\[ X_v \rightarrow X_{e'} \leftarrow X_{v''}   \]
in which the morphism $X_{v} \rightarrow X_{e'}$ is the obvious composition and every subgraph of the form 
\[ X_v \rightarrow X_e = X_{v'}  \]
(where $e$ is the only edge at $v'$)  is replaced by
\[ X_v  \]
In both cases each morphism of the abstract fiber product to some $S \in \mathcal{S}$ corresponding to a morphism $X_{v'} \rightarrow S$ is translated to the composition with the morphism  $X_v \rightarrow X_e = X_{v'}$ and goes out of $X_v$. 

With each abstract fiber product $X$ one can associate an actual fiber product in $\mathcal{S}$ (the limit over $X$ seen as the obvious diagram of the $X_e$ and $X_v$), and with a morphism $X \rightarrow S$ the obvious projection $\lim X \rightarrow S$. Concatenation corresponds (up to unique isomorphism) to the formation 
of fiber product. 

With this notion of abstract fiber product, in the definition of 1-morphisms of the symmetric 2-multicategory $\mathcal{S}^{\cor}$, we replace the object $U$ by an abstract fiber product and the morphisms to $S_1, \dots, S_n$, and $T$ are understood in the sense above. The composition is given by concatenation and subsequent reduction. This operation is now strictly associative and has strict units. There is still an operation of the symmetric groups strictly compatible with composition.
To define the sets of 2-morphisms we choose an actual fiber product ``$\lim X$''  for each abstract fiber product $X$. This operation maps each new 1-multimorphism to a usual multicorrespondence (\ref{excor2}) and the sets of 2-morphisms are defined to be the isomorphisms between these multicorrespondences. Similarly, the category $\mathcal{S}^{\cor, G}$ is defined allowing all  morphisms between these multicorrespondences. 

Finally, there are now {\em strict} functors $\mathcal{S} \rightarrow \mathcal{S}^{\cor}$, and $\mathcal{S}^{\op} \rightarrow \mathcal{S}^{\cor}$, and for each $S \in \mathcal{S}$ a {\em strict} functor $\{ \cdot \} \rightarrow \mathcal{S}^{\cor}$ from the
final multicategory with value $S$. 
\end{BEM}

\begin{PAR}
Definition~\ref{DEFSCOR} can be generalized to the case of a general opmulticategory (cf.\@ \cite[1.2]{Hor15b}) $\mathcal{S}$ which has multipullbacks in the following sense: Given a multimorphism $T \rightarrow S_1, \dots, S_n$  and a morphism $S_i' \rightarrow S_i$ for some $1 \le i \le n$,  
a {\bf multipullback}  is a universal square of the form
\[ \xymatrix{
T' \ar[r] \ar[d] & S_1, \dots, S_i', \dots, S_n \ar[d] \\
T \ar[r] & S_1, \dots, S_i, \dots, S_n.
} \]

A usual category  $\mathcal{S}$ becomes an opmulticategory setting 
\begin{equation}\label{opmulti} \Hom(T; S_1, \dots, S_n) := \Hom(T, S_1) \times \cdots \times \Hom(T, S_n). \end{equation}
In case that a usual category $\mathcal{S}$ has pullbacks it automatically has multipullbacks w.r.t.\@ opmulticategory structure given by (\ref{opmulti}). Those are given by
 Cartesian squares
\[ \xymatrix{
T' \ar[r] \ar[d] &  S_i'  \ar[d] \\
T \ar[r] & S_i.
} \]
For any opmulticategory $\mathcal{S}$ with multipullbacks we define $\mathcal{S}^{\cor}$ to be the 2-category whose objects are the objects of $\mathcal{S}$, whose 1-morphisms
are the multicorrespondences of the form
\[ \xymatrix{
& U \ar[rd] \ar[ld]  \\
S_1, \dots, S_n &  & T \\
} \]
and whose 2-morphisms are commutative diagrams of multimorphisms
\[ \xymatrix{
& U \ar[rd] \ar[ld] \ar[dd]^\gamma \\
S_1, \dots, S_n &  & T. \\
& U' \ar[ru] \ar[lu]  
} \]
in which $\gamma$ is an isomorphism. 
The composition is given by forming the multipullback. 
The reader may check that if the opmulticategory structure on $\mathcal{S}$ is given by (\ref{opmulti}) we reobtain the 2-multicategory $\mathcal{S}^{\cor}$ defined in \ref{DEFSCOR}.
\end{PAR}

\begin{PAR}
Recall from \cite[]{Hor15b} the following:
\end{PAR}
\begin{DEF}\label{DEF6FU}
Let $\mathcal{S}$ be a opmulticategory with multipullbacks. A {\bf (symmetric) six-functor-formalism} on $\mathcal{S}$ is a 1-bifibered and 2-bifibered (symmetric) 2-multicategory with 1-categorical fibers
\[ p: \mathcal{D} \rightarrow \mathcal{S}^{\cor}. \]
A {\bf (symmetric) Grothendieck context} on $\mathcal{S}$ is a 1-bifibered and 2-opfibered (symmetric) 2-multicategory with 1-categorical fibers
\[ p: \mathcal{D} \rightarrow \mathcal{S}^{\cor,G}. \]
A {\bf (symmetric) Wirthm\"uller context} on $\mathcal{S}$ is a 1-bifibered and 2-fibered (symmetric) 2-multicategory with 1-categorical fibers
\[ p: \mathcal{D} \rightarrow \mathcal{S}^{\cor,G}. \]
\end{DEF}

\begin{PAR}\label{PARS0}
Consider a class of ``proper'' (resp.\@ ``etale'') 1-ary morphisms $\mathcal{S}_0$ in $\mathcal{S}$, which abstractly can be any class satisfying the following properties:
\begin{enumerate}
\item Isomorphisms are in $\mathcal{S}_0$;
\item $\mathcal{S}_0$ is stable under composition;
\item $\mathcal{S}_0$ is stable under multipullback. 
\end{enumerate}
We define $\mathcal{S}^{\cor, 0}$ to be the subcategory of $\mathcal{S}^{\cor, G}$ with the same objects and 1-morphisms, but
where the morphisms $\gamma: U \rightarrow U'$ entering the definition of 2-morphism are the morphisms in $\mathcal{S}_0$. Then consider a 1-bifibration
\[ p: \mathcal{D} \rightarrow \mathcal{S}^{\cor, 0} \]
which is a 2-opfibration in the proper case and a 2-fibration in the etale case. We call this respectively a {\bf (symmetric) proper six-functor-formalism} and a {\bf (symmetric) etale six-functor-formalism}. 
\end{PAR}

We denote by  $\SSS^{\cor}$, $\SSS^{\cor, 0, \lax}$, and \@ $\SSS^{\cor, 0, \oplax}$, respectively  (cf.\@ \ref{PARREPR2PREMULTIDER}) the pre-2-multiderivators represented by 
$\mathcal{S}^{\cor}$ (resp. $\mathcal{S}^{\cor, 0}$ with choice of classes of proper or etale morphisms) and proceed to state the derivator version of the previous definition:

\begin{DEF}\label{DEF6FUDER}
\begin{enumerate}
\item A {\bf (symmetric) derivator six-functor-formalism} is a left and right fibered (symmetric) multiderivator
\[ \DD \rightarrow \SSS^{\cor}. \] 

\item A {\bf (symmetric) proper derivator six-functor-formalism} is as before together with an extension as oplax left fibered (symmetric) multiderivator 
\[ \DD' \rightarrow \SSS^{\cor, 0, \oplax},  \] 
and an extension as lax right fibered (symmetric) multiderivator 
\[ \DD'' \rightarrow \SSS^{\cor, 0, \lax}.  \] 

\item A {\bf (symmetric) etale derivator six-functor-formalism} is as before together with an extension as lax left fibered (symmetric) multiderivator 
\[ \DD' \rightarrow \SSS^{\cor, 0, \lax},  \] 
and an extension as oplax right fibered (symmetric) multiderivator 
\[ \DD'' \rightarrow \SSS^{\cor, 0, \oplax}.  \] 
\end{enumerate}
\end{DEF}
In particular, and in view of \cite[Section~8]{Hor15b}, if $\mathcal{S}^{\cor, 0} = \mathcal{S}^{\cor, G}$ is formed w.r.t.\@ the choice of {\em all morphisms}, we call a proper derivator six-functor-formalism a {\bf derivator Grothendieck context} and
an etale derivator six-functor-formalism a {\bf derivator Wirthm\"uller context}.

\begin{PAR}\label{DUALITY}
As mentioned, if $\SSS$ is really a pre-2-multiderivator, as opposed to a usual pre-multiderivator,
 the functor
\[ \Dia^{\cor}(\SSS) \rightarrow \Dia^{\cor}, \]
has hardly ever any fibration properties, because of the truncation involved in the definition of the categories of 1-morphisms. 
Nevertheless the composition
\[ \Dia^{\cor}(\SSS) \rightarrow \{\cdot\} \]
is often 1-bifibered, i.e.\@ there exists an absolute monoidal product on $\Dia^{\cor}(\SSS)$ extending the one on $\Dia^{\cor}$. 
For example, if $\mathcal{S}$ is a usual 1-category with finite limits equipped with the opmulticategory structure (\ref{opmulti}) then for the pre-2-multiderivator $\SSS^{\cor}$ represented by $\mathcal{S}^{\cor}$, we have on $\Dia^{\cor}(\SSS^{\cor})$ the monoidal product
\[ (I, F) \boxtimes (J, G) = (I \times J, F \times G) \]
where $F \times G$ is the diagram of correspondences in $\mathcal{S}$ formed by applying $\times$ point-wise. Similarly we have
\[ \mathbf{HOM}\left((I, F), (J, G)\right) = (I^{\op} \times J, F^{\op} \times G) \]
 where in $F^{\op}$ all correspondences are flipped. In particular any object $(I, F)$ in $\Dia^{\cor}(\SSS^{\cor})$ is dualizable with duality explicitly given by
 \[ \mathbf{HOM}\left((I, F), (\cdot, \cdot)\right) = (I^{\op}, F^{\op}).  \]
 where $\cdot$ denotes the final object of $\Dia$, and $\mathcal{S}$, respectively. 
 
 Given a derivator six-functor-formalism $\DD \rightarrow \SSS^{\cor}$ we get an external 
 monoidal product even on $\Dia^{\cor}(\DD)$ which prolongs the one on diagrams of correspondences.
\end{PAR}

\section{Construction --- Part I}\label{CONSTRUCTION}

\begin{PAR}\label{PARCONSTRUCTION}
In the remaining part of the article we formally {\em construct} a (symmetric) derivator six-functor-formalism in which $f_!=f_*$, i.e.\@ a derivator Grothendieck context, starting from a (symmetric) fibered multiderivator $\DD \rightarrow \SSS^{\op}$. The construction on the underlying bifibration of multicategories $\DD(\cdot) \rightarrow \mathcal{S}^{\op}$ is very simple: By \cite[Proposition~2.17]{Hor15b} this bifibration may be seen as a pseudo-functor of multicategories
\[ \Phi: \mathcal{S}^{\op} \rightarrow \mathcal{CAT} \]
where the  functors in the image have right adjoints. (For the construction it suffices that the images of 1-ary morphisms have right adjoints.) We may define a pseudo-functor
\[ \mathcal{S}^{\cor, G} \rightarrow \mathcal{CAT} \]
mapping objects in the same way as $\Phi$ and mapping a multicorrespondence 
\[ \xymatrix{
& U \ar[rd]^f \ar[ld]_g  \\
S_1, \dots, S_n &  & T \\
} \]
to $f_* g^*$ where $g^*$ is the image of $g$ under the pseudo-functor $\Phi$ and $f_*$ is the right adjoint of the image of $f$. It has been shown in the discussion after \cite[Definition~3.12]{Hor15b} that this is well-defined if $\DD(\cdot) \rightarrow \mathcal{S}^{\op}$ satisfies multi-base-change in the sense of Definition~\ref{DEFMULTIBASECHANGE}.
\end{PAR}

\begin{HAUPTSATZ}\label{MAINTHEOREM1}
Let $\mathcal{S}$ be a (symmetric) opmulticategory with multipullbacks and let $\SSS^{\op}$ be the (symmetric) pre-multiderivator  represented by $\mathcal{S}^{\op}$ . 
Let $\DD \rightarrow \SSS^{\op}$ be a (symmetric) left and right fibered multiderivator such that the following holds:
\begin{enumerate}
\item 
The pullback along 1-ary morphisms (i.e.\@ pushforward along 1-ary morphisms in $\mathcal{S}$) commutes also with homotopy {\em co}limits.
\item 
In the underlying bifibration $\DD(\cdot) \rightarrow \SSS(\cdot)$ multi-base-change holds in the sense of Definition~\ref{DEFMULTIBASECHANGE}. 
\end{enumerate}
Then there exists a (symmetric) oplax left fibered multiderivator
\[ \EE \rightarrow \SSS^{\cor,G,\oplax}  \]
satisfying the following properties
\begin{itemize}
\item[a)] 
The corresponding (symmetric) 1-opfibration, and 2-opfibration of 2-multicategories with 1-categorical fibers
\[ \EE(\cdot) \rightarrow \SSS^{\cor,G,\oplax}(\cdot) = \mathcal{S}^{\cor,G}  \]
is just (up to equivalence) obtained from $\DD(\cdot) \rightarrow \mathcal{S}^{\op}$ by the procedure described in \ref{PARCONSTRUCTION}.
\item[b)] For every $S \in \mathcal{S}$ there is a canonical equivalence of fibers: 
\[ \EE_S \cong \DD_S  \]
and if those are stable, {\em all} fibers of $\EE \rightarrow \SSS^{\cor,G}$ are stable derivators with domain $\Posf$. 
\end{itemize}
\end{HAUPTSATZ}

Using standard theorems on Brown representability \cite[Section 3.1]{Hor15} we can refine this.
\begin{HAUPTSATZ}\label{MAINTHEOREM2}Let $\Dia$ be an infinite diagram category \cite[Definition 1.1.1]{Hor15} which contains all finite posets. 
Let $\mathcal{S}$ be a (symmetric) opmulticategory with multipullbacks and let $\SSS$ be the corresponding represented (symmetric) pre-multiderivator with domain $\Dia$. 
Let $\DD \rightarrow \SSS^{\op}$ be an {\em infinite} (symmetric) left and right fibered multiderivator with domain $\Dia$ satisfying conditions 1.\@ and 2.\@ of Theorem~\ref{MAINTHEOREM1}, {\em with stable, perfectly generated fibers} (cf.\@ Definition~\ref{DEFSTABLE} and \cite[Section 3.1]{Hor15}).

Then the restriction of the left fibered multiderivator $\EE$ from Theorem~\ref{MAINTHEOREM1} is a (symmetric) left {\em and right} fibered multiderivator with domain $\Dia$
\[ \EE|_{\SSS^{\cor}} \rightarrow \SSS^{\cor} \]
and has an extension as a (symmetric) {\em lax} right fibered multiderivator with domain $\Dia$
\[ \EE' \rightarrow \SSS^{\cor,G,\lax}. \]
In other words, we get a (symmetric) derivator Grothendieck context in the sense of Section~\ref{DERSSF}. 
\end{HAUPTSATZ}

We begin by explaining the construction of $\EE$. In the second part \cite{Hor17} we will give a much more general construction of arbitrary derivator six-functor-formalisms which, however, is more abstract and indirect and obscures the relatively simple construction in this special case (i.e.\@ the case of a derivator Grothendieck context with $f_*=f_!$). 
We need some preparation:

\begin{PAR}\label{PARTW}
Let $I$ be a diagram, $n$ a natural number and $\Xi = (\Xi_1, \dots, \Xi_n) \in \{ \uparrow, \downarrow \}^n$ be a sequence of arrow directions. We define a diagram
\[ {}^\Xi I \]
whose objects are sequences of $n-1$ morphisms in $I$
\[ \xymatrix{
i_1 \ar[r] & i_2 \ar[r] & \cdots \ar[r]  & i_n
} \]
and whose morphisms are commutative diagrams
\[ \xymatrix{
i_1 \ar[r] \ar@{<->}[d] &i_2 \ar[r] \ar@{<->}[d] & \cdots \ar[r]  & i_n \ar@{<->}[d] \\
i_1' \ar[r] &i_2' \ar[r] & \cdots \ar[r]  & i_n' \\
} \]
in which the $j$-th vertical arrow goes in the direction indicated by $\Xi_j$. 
We call such morphisms {\bf of type $j$} if at most the morphism $i_j \rightarrow i_j'$ is {\em not} an identity. 
{\em From now on we assume that $\Dia$ permits this construction for any $I \in \Dia$, i.e.\@ if $I \in \Dia$ then also ${}^\Xi I \in \Dia$ for every finite $\Xi$.}
\end{PAR}

\begin{BEISPIEL}
\begin{eqnarray*}
{}^{\downarrow} I  & = & I \\
{}^{\uparrow} I  & = & I^{\op} \\
{}^{\downarrow\downarrow} I  & = & I \times_{/I} I \\
{}^{\uparrow\downarrow} I  & = & \mathrm{tw}(I) 
\end{eqnarray*}
where $I \times_{/I} I$ is the comma category (the arrow category of $I$) and $\mathrm{tw}(I)$ is called the {\em twisted arrow category}.  
\end{BEISPIEL}

\begin{PAR}\label{PARTW2}
For any ordered subset $\{i_1, \dots, i_m\} \subseteq \{1, \dots, n\}$, denoting $\Xi'$ the restriction of $\Xi$ to the subset, we get an obvious restriction functor
\[ \pi_{i_1, \dots, i_m}:\  {}^\Xi I \rightarrow {}^{\Xi'} I. \]

If $\Xi = \Xi' \circ \Xi'' \circ \Xi'''$, where $\circ$ means concatenation, then the projection
\[ \pi_{1,\dots,n'}: \ {}^{\Xi}  I \rightarrow {}^{\Xi'} I   \]
is a {\em fibration} if the last arrow of $\Xi'$ is $\downarrow$ and an {\em opfibration} if the last arrow of $\Xi'$ is $\uparrow$ while the projection
\[ \pi_{n-n'''+1,\dots,n}: \  {}^{\Xi} I \rightarrow {}^{\Xi'''} I   \]
is an {\em opfibration} if the first arrow of $\Xi'''$ is $\downarrow$ and a {\em fibration} if the first arrow of $\Xi'''$ is $\uparrow$.
\end{PAR}

\begin{PAR}\label{PARTW3}
A functor $\alpha: I \rightarrow J$ induces an obvious functor
\[ {}^\Xi \alpha: {}^\Xi I \rightarrow {}^\Xi J. \]

A natural transfomation $\mu: \alpha \Rightarrow \beta$ induces functors
\[  ({}^\Xi \mu)_0, \dots, ({}^\Xi \mu)_n: {}^{\Xi} I \rightarrow {}^\Xi J   \]
with $({}^\Xi \mu)_0 = {}^\Xi \alpha$, and $({}^\Xi \mu)_n = {}^\Xi  \beta$,
defined by mapping an object $\xymatrix{ i_1 \ar[r]^{\nu_1} & i_2 \ar[r] & \dots \ar[r]^{\nu_{n-1}} & i_n}$ of ${}^\Xi I$ to the sequence:
\[ \xymatrix{
\alpha(i_1) \ar[r]  & \cdots \ar[r]  &  \alpha(i_{n-j}) \ar[rd]^{\ \ \beta(\nu_{n-j}) \circ \mu(i_{n-j})} \\
& & & \beta(i_{n-j+1}) \ar[r] & \cdots \ar[r]  & \beta(i_n) \\
} \]
There is a sequence of natural transformations
\[ {}^\Xi  \alpha=({}^\Xi \mu)_0 \Leftrightarrow \cdots \Leftrightarrow ({}^\Xi \mu)_n= {}^\Xi \beta \]
where the natural transformations at position $i$ (the count starting with 0) goes to the right if $\Xi_{n-i}=\downarrow$ and to the left if $\Xi_{n-i} = \uparrow$. 
Furthermore, the natural transformation at position $i$ consists element-wise of morphisms of type $n-i$.  

\end{PAR}

\begin{PAR}
Let $S: I \rightarrow \mathcal{S}^{\cor}$ be a pseudo-functor. 
We can associate to it a natural functor $S': \tw{I} \rightarrow \mathcal{S}$
such that for each composition of three morphisms $\gamma \beta \alpha$
the commutative diagram
\begin{equation}\label{bla} 
\vcenter{
\xymatrix{
\gamma \beta \alpha \ar[r] \ar[d] &  \beta \alpha  \ar[d] \\
\gamma \beta  \ar[r] & \beta 
}}
\end{equation}
in $\tw{I}$ is mapped to a Cartesian square in $\mathcal{S}$. We call such diagrams {\bf admissible}. 
Note that the horizontal morphisms are of type 2 and the vertical ones of type 1. 
Conversely every square in $\tw{I}$ with these properties has the above form. 

The construction of $S'$ is as follows. $S$ maps a morphism $\nu$ in $I$ to a correspondence in $\mathcal{S}$
\[ X_\nu \longleftarrow A_\nu \longrightarrow Y_\nu, \]
and we define $S'(\nu) := A_\nu$. A morphism $\xi: \nu \rightarrow \mu$ defined by
\[ \xymatrix{
i \ar[r]^\nu \ar[d]_\alpha & j  \\
k \ar[r]_\mu &   \ar[u]_\beta l
} \]
induces, by definition of the composition in $\mathcal{S}^{\cor}$, a commutative diagram in which all squares are Cartesian: 
\[ \xymatrix{ 
& & & A_{\nu} \ar[dl] \ar[rd] & & \\
& & A_{\mu \alpha} \ar[dl] \ar[rd] & & A_{\beta \mu} \ar[dl] \ar[rd] & & \\
& A_\alpha \ar[dl] \ar[rd] & & A_{\mu} \ar[dl] \ar[rd] & & A_{\beta} \ar[dl] \ar[rd] & \\
X  & & Y  && Z && W
 } \]
 We define $S'(\xi)$ to be the induced morphism $A_{\nu} \rightarrow A_{\mu}$. Note that the square of the form (\ref{bla}) is just mapped to the
 upper square in the above diagram, thus to a Cartesian square. Hence the so defined functor $S'$ is admissible. 
\end{PAR}

\begin{PAR}\label{PARALTDIASCOR}\label{ALTSCORPREDER}
A multimorphism 
\[ T \longrightarrow S_1, \dots, S_n \]
of admissible diagrams in $\SSS(\tw{I})$ is called {\bf type $i$ admissible} ($i=1,2$), if for any morphism $\xi: \nu \rightarrow \mu$ in $\tw{I}$ of type $i$
the diagram
\[ \xymatrix{
T(\nu) \ar[r] \ar[d] & (S_1(\nu), \dots, S_n(\nu)) \ar[d] \\
T(\mu) \ar[r] & (S_1(\mu),  \dots, S_n(\mu))
} \]
is a multipullback. 

A multimorphism $(X_1, \dots, X_n) \rightarrow Y$ in $\mathcal{S}^{\cor}(I)$ can be seen equivalently as a multicorrespondence of admissible diagrams in $\mathcal{S}(\tw{I})$
\[ \xymatrix{
& A \ar[rd]^f \ar[ld]_g \\
(X_1, \dots, X_n) & & Y
} \]
where $f$ is type 2 admissible and $g$ is type 1 admissible. 
In this description, the 2-morphisms are the commutative diagrams 
\[ \xymatrix{ 
& A \ar[dd]^h \ar[dl] \ar[rd] \\
(X_1, \dots, X_n)    &&Y   \\
& A' \ar[ur] \ar[lu] 
} \]
where the morphism $h$ is an isomorphism. 

In this way, we see that the 2-multicategory $\mathcal{S}^{\cor}(I)$ is equivalent to the 2-multicategory having as objects admissible diagrams $\tw{I} \rightarrow \mathcal{S}$ with the 1-multimorphisms and 2-morphisms described above. 
\end{PAR}

\begin{LEMMA} \label{LEMMATYPE12}
Type $i$ admissible morphisms $S \rightarrow T$ between admissible diagrams $S, T \in \SSS(\tw{I})$ satisfy the following property:

If $h_3 = h_2 \circ h_1$ and $h_2$ is type $i$-admissible then $h_1$ is type $i$ admissible if and only if $h_3$ is type $i$ admissible. 
\end{LEMMA}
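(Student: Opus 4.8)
The plan is to unwind the definition of type-$i$ admissibility and then reduce the assertion to the standard pasting law for Cartesian squares, applied separately over each type-$i$ morphism of $\tw{I}$.

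First I would recall, from \ref{PARALTDIASCOR} specialized to the unary case, what type-$i$ admissibility of a morphism $h: S \rightarrow T$ of admissible diagrams means: for every morphism $\xi: \nu \rightarrow \mu$ in $\tw{I}$ of type $i$, the naturality square
\[ \xymatrix{ S(\nu) \ar[r]^{h(\nu)} \ar[d]_{S(\xi)} & T(\nu) \ar[d]^{T(\xi)} \\ S(\mu) \ar[r]_{h(\mu)} & T(\mu) } \]
is a pullback in $\mathcal{S}$. Thus admissibility is a condition checked arrow-by-arrow over the type-$i$ morphisms of $\tw{I}$.

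Next, writing $h_1: S_1 \rightarrow S_2$, $h_2: S_2 \rightarrow S_3$ and $h_3 = h_2 \circ h_1$ with $h_2$ type-$i$ admissible, I would fix an arbitrary type-$i$ morphism $\xi: \nu \rightarrow \mu$ and observe that the naturality squares of $h_1$, $h_2$ and $h_3$ at $\xi$ paste horizontally into
\[ \xymatrix{ S_1(\nu) \ar[r]^{h_1(\nu)} \ar[d] & S_2(\nu) \ar[r]^{h_2(\nu)} \ar[d] & S_3(\nu) \ar[d] \\ S_1(\mu) \ar[r]_{h_1(\mu)} & S_2(\mu) \ar[r]_{h_2(\mu)} & S_3(\mu) } \]
where the left square belongs to $h_1$, the right square to $h_2$, and the outer rectangle to $h_3$; this uses only that $h_3 = h_2 \circ h_1$ holds objectwise in $\tw{I}$. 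By hypothesis the right square is Cartesian.

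Finally I would invoke the pasting law for pullbacks: with the right square Cartesian, the left square is Cartesian if and only if the outer rectangle is. Running this equivalence over all type-$i$ morphisms $\xi$ of $\tw{I}$ gives exactly that $h_1$ is type-$i$ admissible iff $h_3$ is type-$i$ admissible. There is no genuine obstacle here: the whole content is the pasting law, and the only point requiring attention is the arrow-by-arrow nature of admissibility, so the pasting law is applied pointwise and uniformly at each such $\xi$. (In the general multimorphism version of the definition one would replace it by the corresponding pasting statement for multipullbacks, but the unary statement above needs only the ordinary one.)
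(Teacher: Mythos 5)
Your proof is correct and matches the paper's argument: the paper disposes of this lemma in one line by appealing to ``the corresponding property of Cartesian squares,'' which is precisely the pasting law for pullbacks that you spell out pointwise over the type-$i$ morphisms of $\tw{I}$. Your horizontal pasting, with the $h_2$-square (Cartesian by hypothesis) on the right, is the correct orientation for concluding that the $h_1$-square is Cartesian if and only if the outer $h_3$-rectangle is.
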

\begin{proof}
This follows immediately from the corresponding property of Cartesian squares. 
\end{proof}

\begin{PAR}
The discussion in \ref{PARALTDIASCOR} has an (op)lax variant. Recall the definition of the category (value of the represented (op)lax pre-2-multiderivator) $\SSS^{\cor, G, \lax}(I)$ (resp.\@ $\SSS^{\cor, G, \oplax}(I)$), of pseudo-functors, {\em (op)lax} natural transformations, and modifications. 
A {\em lax} multimorphism of pseudo-functors 
\[ (X_1, \dots, X_n) \longrightarrow Y \]
 can be equivalently seen as a multicorrespondence  of admissible diagrams in $\SSS(\tw{I})$
\[ \xymatrix{
& A \ar[rd]^f \ar[ld]_g \\
(X_1, \dots, X_n) & & Y
} \]
where $g$ is type 1 admissible and $f$ is {\em arbitrary}.  Similarly an {\em oplax} multimorphism can be seen as such a multicorrespondence in which $g$ is {\em arbitrary} and $f$ is type 2 admissible. 
In the 2-morphisms the morphism $h$ can be an arbitrary morphism, which is automatically type 1 admissible in the lax case and type 2 admissible in the oplax case (cf.\@ Lemma~\ref{LEMMATYPE12}). 
\end{PAR}

\begin{PAR}
A natural transformation $\mu: \alpha \Rightarrow \beta$  gives rise (cf.\@ \ref{PARTW3}) to a sequence of natural transformations
\[ (\tw{\alpha}) \Leftarrow (\tw{\mu})_1 \Rightarrow (\tw{\beta}). \]
For any admissible diagram $S: \tw{I} \rightarrow \mathcal{S}$ this defines a diagram
\[ \xymatrix{
& (\tw{\mu})_1^*S \ar[rd]^{f_S} \ar[ld]_{g_S} \\
(\tw{\alpha})^*S & & (\tw{\beta})^*S
} \]

in which the morphism $f_S$ is type 2 admissible and the morphism $g_S$ is type 1 admissible. This defines a 1-morphism 
\[ (\tw{\alpha})^*S \rightarrow (\tw{\beta})^*S \]
in the alternative description (cf.\@ \ref{PARALTDIASCOR}) of $\SSS^{\cor}(I)$. 
For any admissible diagram $S$ this defines a pseudo-functor $\alpha \mapsto \alpha^* S$ from the category of functors $\Hom(I, J)$ to the 2-category $\SSS^{\cor}(I)$.
\end{PAR}

\begin{PAR}
Let $I$ be a diagram.
Consider the category $\twc{I}$ defined in \ref{PARTW}. Recall that its objects are compositions of two morphisms in $I$ and its morphisms $\nu \rightarrow \mu$ are commutative diagrams
\[ \xymatrix{
i \ar[r]^{\nu_1} \ar[d] & j \ar[r]^{\nu_2}& k  \ar[d]  \\
i' \ar[r]_{\mu_1} & j'  \ar[r]_{\mu_2} \ar[u] & k' 
} \]
\end{PAR}
\begin{PAR}
If $\alpha: I \rightarrow J$ is an opfibration and we form the pull-back
\[ \xymatrix{
\twc{J} \times_J I  \ar[r] \ar[d] & I \ar[d]^{\alpha} \\
\twc{J} \ar[r]_{\pi_1} & J
} \]
and
\[ \xymatrix{
\twc{J} \times_{\tw{J}} \tw{I}  \ar[r] \ar[d] & \tw{I} \ar[d]^{\tw{\alpha}} \\
\twc{J} \ar[r]_{\pi_{12}} & \tw{J}
} \]
then obviously the left vertical functors are opfibrations as well.
\end{PAR}

\begin{LEMMA}\label{LEMMAOPFIB}
Let $\alpha: I \rightarrow J$ be an opfibration, and consider the sequence defined by the universal property of pull-backs
\[ \xymatrix{ \twc{I} \ar[r]^-{q_1} & \twc{J} \times_{(\tw{J})} \tw{I} \ar[r]^-{q_2} & \twc{J} \times_J I. } \]
\begin{enumerate}
\item The functor $q_1$ is an opfibration. The fiber of $q_1$ over a pair $j_1 \rightarrow j_2 \rightarrow j_3$ and $i_1 \rightarrow i_2$ is
\[ i_3 \times_{/I_{j_3}} I_{j_3}\]
where $i_3$ is the target of a coCartesian arrow over $j_2 \rightarrow j_3$ with source $i_2$. 
\item The functor $q_2$ is a fibration. The fiber of $q_2$ over a pair $j_1 \rightarrow j_2 \rightarrow j_3$ and $i_1$ is 
\[ (i_2 \times_{/I_{j_2}} I_{j_2})^{\op} \]
where $i_2$ is the target of a coCartesian arrow over $j_1 \rightarrow j_2$ with source $i_1$. 
\end{enumerate}
\end{LEMMA}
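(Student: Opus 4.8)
The plan is to prove both statements by hand, directly exhibiting the (co)Cartesian lifts and reading off the fibers, using only that $\alpha$ is an opfibration, i.e.\@ that every morphism in $J$ out of an object $\alpha(i)$ admits a coCartesian lift in $I$ starting at $i$. First I would record the two functors explicitly: on objects $q_1$ sends $i_1 \to i_2 \to i_3$ to $\big((\alpha(i_1) \to \alpha(i_2) \to \alpha(i_3)),\ (i_1 \to i_2)\big)$, while $q_2$ sends $\big(\sigma, (i_1 \to i_2)\big)$ to $(\sigma, i_1)$, where $\sigma \in \twc{J}$. In particular $q_1$ is a functor over $\tw{I}$ (via $\pi_{12}$ on the source and the second projection on the target) and $q_2$ is a functor over $\twc{J}$ (via the first projection in each case); all projections in sight are (op)fibrations by \ref{PARTW2} and by stability under pullback, which organizes the bookkeeping though it is not strictly needed for the direct argument.

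For the first assertion I would first compute the fiber of $q_1$ over $\big((j_1 \to j_2 \to j_3),\ (i_1 \to i_2)\big)$: unwinding the definition of $\twc{I}$, an object is a pair $(i_3, \nu_2\colon i_2 \to i_3)$ with $\alpha(i_3) = j_3$ and $\alpha(\nu_2) = (j_2 \to j_3)$, and a fibre morphism is a map $i_3 \to i_3'$ in $I_{j_3}$ compatible with the $\nu_2$'s (the outer legs being forced to identities). Choosing a coCartesian lift $\kappa\colon i_2 \to \widehat{i_3}$ of $j_2 \to j_3$, every such $\nu_2$ factors uniquely as $\theta\circ\kappa$ with $\theta$ in $I_{j_3}$, which identifies the fiber with the coslice $\widehat{i_3} \times_{/I_{j_3}} I_{j_3}$, as claimed. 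To see that $q_1$ is an opfibration I would observe that a morphism out of $q_1(i_1 \to i_2 \to i_3)$ amounts to a morphism $(a,b)\colon (i_1 \to i_2) \to (i_1'' \to i_2'')$ in $\tw{I}$ together with a morphism $r\colon \alpha(i_3) \to j_3'$ in $J$; the coCartesian lift is obtained by keeping $(a,b)$ and replacing the third spot by a coCartesian lift $\overline{c}\colon i_3 \to \overline{i_3}$ of $r$, its universal property being exactly that of $\overline{c}$ applied in the third coordinate, the first two coordinates being rigidly determined.

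For the second assertion the argument is dual, but with a reversal of variance. The fiber of $q_2$ over $(\sigma, i_1)$ consists of pairs $(i_2, \nu_1\colon i_1 \to i_2)$ over $j_1 \to j_2$; crucially, because the middle arrow of $\twc{}$ points upward, a fibre morphism from $(i_2,\nu_1)$ to $(i_2',\nu_1')$ is a map $b\colon i_2' \to i_2$ in $I_{j_2}$ with $\nu_1 = b\circ\nu_1'$, so the arrows run backwards. Factoring through the coCartesian lift of $j_1 \to j_2$ then identifies this fiber with $(\widehat{i_2} \times_{/I_{j_2}} I_{j_2})^{\op}$. To produce Cartesian lifts for $q_2$ I would, given a morphism $\chi$ into $q_2\big(\sigma,(i_1 \to i_2)\big)$ whose middle $\twc{J}$-component is $Q\colon j_2 \to k_2$, lift $Q$ to a coCartesian arrow $e\colon i_2 \to w_2$ of $\alpha$; the resulting morphism $\widetilde{\chi}$ is $q_2$-Cartesian precisely because of the coCartesian universal property of $e$ in the middle coordinate.

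The only genuinely non-formal point to get right is this variance: the Cartesian lifts of the fibration $q_2$ are manufactured from coCartesian lifts of $\alpha$, which is exactly why an $\op$ appears in its fibers, whereas the opfibration $q_1$ uses coCartesian lifts of $\alpha$ directly and produces honest coslices. Once the coCartesian arrows of $\alpha$ are inserted in the correct coordinate — the third for $q_1$, the middle for $q_2$ — and the directions of the outer legs are tracked through the commutativity conditions defining $\twc{I}$, both universal properties reduce to that of a single coCartesian arrow, and the remaining compatibilities are routine diagram checks that I would leave implicit.
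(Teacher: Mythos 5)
Your proof is correct: the explicit descriptions of $q_1$ and $q_2$, the fiber computations via unique factorization through a chosen $\alpha$-coCartesian lift, and the coordinate-wise construction of the (co)Cartesian lifts — coCartesian lifts of $\alpha$ inserted in the third slot for $q_1$ and in the middle ($\uparrow$) slot for $q_2$, the latter's contravariance being exactly what produces the $\op$ in the fiber — are all accurate. The paper records the proof only as ``Straightforward,'' so your direct verification is precisely the routine unwinding it intends, with no divergence in approach to report.
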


\begin{proof}Straightforward. \end{proof}

Recall the following definition from \cite[Definition 2.4.1]{Hor15}, in which $\SSS$ (generalizing slightly the definition of \cite{Hor15}) can be any pre-2-multiderivator.  

\begin{DEF}\label{DEFCOCART}
Let $\DD \rightarrow \SSS$ be a right (resp.\@ left) fibered (multi-)derivator of domain $\Dia$. 
Let $I, E \in \Dia$ be diagrams and let $\alpha: I \rightarrow E$ be a functor in $\Dia$. 
We say that an object \[ \mathcal{E} \in \DD(I)\] is $E$-{\bf (co-)Cartesian}, if for any morphism
$\mu: i \rightarrow j$ in $I$ mapping to an identity in $E$, the corresponding morphism
$\DD(\mu): i^*\mathcal{E} \rightarrow j^*\mathcal{E}$ is (co-)Cartesian. 

If $E$ is the trivial category, we omit it from the notation, and talk about (co-)Cartesian objects. 
\end{DEF}

These notions define full subcategories $\DD(I)^{E-\mathrm{cart}}$ (resp.\@ $\DD(I)^{E-\mathrm{cocart}}$) of $\DD(I)$, and $\DD(I)_S^{E-\mathrm{cart}}$ (resp.\@  $\DD(I)_S^{E-\mathrm{cocart}}$) of $\DD(I)_S$ for any $S \in \SSS(I)$.
If we want to specify the functor $\alpha$, we speak about $\alpha$-(co)Cartesian objects and denote these e.g.\@ by $\DD(I)_S^{\alpha-\mathrm{cart}}$.

\begin{DEF}\label{DEFE}
Let $\mathcal{S}$ be an opmulticategory with multipullbacks and let $\SSS^{\op}$ be the  pre-multiderivator represented by $\mathcal{S}^{\op}$. 
Let $\DD \rightarrow \SSS^{\op}$ be a (left and right) fibered multiderivator such that 
conditions 1.\@ and 2.\@ of Theorem~\ref{MAINTHEOREM1} hold true.

We define the morphism of pre-2-multiderivators $\EE \rightarrow \SSS^{\cor}$ of Theorem~\ref{MAINTHEOREM1}.  
The pre-2-multiderivator $\EE$ is defined as follows: 
A diagram $I$ is mapped to a 1-opfibered, and 2-opfibered multicategory with 1-categorical fibers  $\EE(I) \rightarrow \SSS^{\cor,G,\oplax}(I)$.
We will specify this by giving the pseudo-functor of 2-multicategories
\[   \SSS^{\cor,G,\oplax}(I)^{2-\op} \rightarrow \mathcal{CAT}  \]
where we understand $\SSS^{\cor,G,\lax}(I)$ in the form described in \ref{ALTSCORPREDER}.  
An oplax pseudo-functor $I \rightarrow \mathcal{S}^{\cor,G}$ represented by an admissible diagram $S: \tw{I} \rightarrow \mathcal{S}$ is mapped to the category

\vspace{0.2cm}
\fbox{\begin{minipage}{\textwidth}
\[  \EE(I)_S := \DD(\twc{I})_{\pi_{23}^* (S^{\op})}^{\pi_{12}-\cocart, \pi_{13}-\cart}  \]
\end{minipage}}\vspace{0.2cm}
(cf.\@ Definition~\ref{DEFCOCART}). Note that $({}^{\downarrow \uparrow } I )^{\op} = {}^{\uparrow \downarrow} I$. 

A multicorrespondence 
\[ \xymatrix{
& A \ar[rd]^f \ar[ld]_g \\
(S_1, \dots, S_n) & & T
} \]
in which $f$ is type 2 admissible and $g$ is type 1 admissible 
is mapped to the functor

\vspace{0.2cm}
\fbox{\begin{minipage}{\textwidth}
\[  (\pi_{23}^*f)^\bullet (\pi_{23}^*g)_\bullet: \EE(I)_{S_1} \times \cdots \times \EE(I)_{S_n} \rightarrow \EE(I)_{T}.   \]
\end{minipage}}\vspace{0.2cm}

Note that, by Lemma~\ref{LEMMACOCARTPRES}, $(\pi_{23}^*g)_\bullet$ preserves the subcategory of $\pi_{12}$-Cartesian objects and, by Lemma~\ref{LEMMACARTPRES}, $(\pi_{23}^*f)^\bullet$  preserves the subcategory of $\pi_{13}$-coCartesian objects. 
In the oplax case, the condition on $f$ is repealed and the multicorrespondence is mapped to 
\[  \Box_* (\pi_{23}^*f)^\bullet (\pi_{23}^*g)_\bullet   \]
where $\Box_*$ is the {\em right coCartesian projection} defined and discussed in Section~\ref{COCARTPROJ}. 

A 2-morphism, given by a morphism of multicorrespondences
\[ \xymatrix{ 
& A \ar[dd]^h \\
X_1, \dots, X_n  \ar@{<-}[ur]^f \ar@{<-}[rd]_{f'}  &&Y  \ar@{<-}[ul]_{g} \ar@{<-}[ld]^{g'}  \\
& A' 
} \]
where $h$ is an isomorphism, is mapped to the natural transformation given by the unit 

\vspace{0.2cm}
\fbox{\begin{minipage}{\textwidth}
\[  (\pi_{23}^*f)^\bullet (\pi_{23}^*g)_\bullet \cong (\pi_{23}^*f')^\bullet (\pi_{23}^*h)^\bullet  (\pi_{23}^*h)_\bullet (\pi_{23}^*g')_\bullet 
\leftarrow (\pi_{23}^*f')^\bullet (\pi_{23}^*g')_\bullet.   \]
\end{minipage}}\vspace{0.2cm}

In the oplax case, $h$ can be an arbitrary morphism (which will be automatically type 1 admissible). The 2-morphism is then mapped to the natural transformation given by the unit 
\[   \Box_*  (\pi_{23}^*f)^\bullet  (\pi_{23}^*g)_\bullet \cong \Box_* (\pi_{23}^*f')^\bullet \Box_* (\pi_{23}^*h)^\bullet(\pi_{23}^*h)_\bullet (\pi_{23}^*g')_\bullet 
\leftarrow \Box_* (\pi_{23}^*f')^\bullet (\pi_{23}^*g')_\bullet.   \]

A functor $\alpha: I \rightarrow J$ is mapped to the functor 

\vspace{0.2cm}
\fbox{\begin{minipage}{\textwidth}
\[ (\twc{\alpha})^* \] 
\end{minipage}}\vspace{0.2cm}

which obviously preserves the  (co)Cartesianity conditions.
This is strictly compatible with composition of functors between diagrams. 
A natural transformation $\mu: \alpha \rightarrow \beta$ is mapped to the following natural transformation $(\twc{\alpha})^* \rightarrow (\twc{\beta})^*$:
We have the correspondence (cf.\@ \ref{ALTSCORPREDER}) 
\[ \xymatrix{
& (\tw{\mu})_1^*S \ar[rd]^{f_\mu} \ar[ld]_{g_\mu} \\
(\tw{\alpha})^* S & & (\tw{\beta})^* S
} \]
where $f_\mu$ is type 2 admissible and $g_\mu$ is type 1 admissible by the definition of admissible diagram. 
On the other hand, there are natural transformations (cf.\@ \ref{PARTW3})
\[ \twc{\alpha} \Rightarrow (\twc{\mu})_1 \Leftarrow (\twc{\mu})_2 \Rightarrow \twc{\beta}. \]
Inserting $\pi_{23}^*(S^{\op})$ into this, we get 
\begin{equation}\label{seq1}
 \xymatrix{ \pi_{23}^* (\tw{\alpha})^*(S^{\op}) \ar@{->}[r]^-{\pi_{23}^* g_\mu} & \pi_{23}^*(\tw{\mu})_1^* (S^{\op}) \ar@{<-}[r]^-{\pi_{23}^* f_\mu} & \pi_{23}^* (\tw{\beta})^* (S^{\op}) \ar@{=}[r] &  \pi_{23}^*  (\tw{\beta})^* (S^{\op}).  } 
 \end{equation}

The natural transformation $\mu: \alpha \rightarrow \beta$ may be seen as a functor $\Delta_1 \times I \rightarrow J$ and therefore we get a functor
\[ \twc{\mu}: \twc{\Delta_1} \times \twc{I} \rightarrow \twc{J}. \]
Applying the (pre-)derivator $\DD$ and partially evaluating at the objects and morphisms of $\twc{\Delta_1}$ we get natural transformations
\begin{eqnarray*}
 (\pi_{23}^*g_\mu)_\bullet (\twc{\alpha})^* & \rightarrow & (\twc{\mu})_1^* \\
 (\twc{\mu})_2^* & \rightarrow & (\pi_{23}^*f_\mu)^\bullet (\twc{\mu})_1^*    \\
 (\twc{\mu})_2^* & \rightarrow & (\twc{\beta})^*   
\end{eqnarray*}
where the $(-)^*$-functors are now considered to be functors between the respective fibers over the objects of (\ref{seq1}). 
Clearly the first two morphisms (in particular the second) are isomorphisms when restricted to the respective categories of (co)Cartesian objects. 
Therefore we can form their composition:

\vspace{0.2cm}
\fbox{\begin{minipage}{\textwidth}
\[ (\pi_{23}^*f)^\bullet (\pi_{23}^*g)_\bullet (\twc{\alpha})^*  \rightarrow (\twc{\beta})^*  \]
\end{minipage}}\vspace{0.2cm}

which will be the image of $\mu$ under the pre-2-multiderivator $\EE$. 
One checks that for any admissible diagram $S \in \SSS(\tw{I})$, this defines a pseudo-functor from the category of functors $\Hom(I, J)$ to the 2-category of functors of the 2-category $\EE(I)$ to the 2-category $\EE(J)$, pseudo-natural transformations and modifications.
\end{DEF}

\begin{LEMMA}\label{LEMMACOCARTPRES}
Under the conditions of Theorem~\ref{MAINTHEOREM1},
let $S, T: \tw{I} \rightarrow \mathcal{S}$ be admissible diagrams and let
$f: S \rightarrow T$ be any morphism in $\SSS(\tw{I})$.
Then the functor
\[ (\pi_{23}^*f)^\bullet: \DD(I)_{\pi_{23}^*{T^{\op}}} \rightarrow \DD(I)_{\pi_{23}^*S^{\op}}   \]
maps always $\pi_{13}$-Cartesian objects to $\pi_{13}$-Cartesian objects, and maps
$\pi_{12}$-coCartesian objects to $\pi_{12}$-coCartesian if $f$ is type 2 admissible. 
\end{LEMMA}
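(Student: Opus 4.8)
The plan is to reduce each of the two assertions to a pointwise statement in the underlying bifibration $\DD(\cdot)\to\SSS^{\op}(\cdot)$ attached to a single transition morphism of $\twc{I}$, and then to recognize that statement as either a formal property of Cartesian arrows (first assertion) or an instance of multi-base-change (second assertion).

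First I would recall, following Definition~\ref{DEFCOCART}, that an object of $\DD(\twc{I})$ over $\pi_{23}^*(S^{\op})$ is $\pi_{13}$-Cartesian (resp.\ $\pi_{12}$-coCartesian) precisely when, for every morphism $\xi$ of $\twc{I}$ lying over an identity under $\pi_{13}$ (resp.\ under $\pi_{12}$), the transition morphism $\DD(\xi)$ is Cartesian (resp.\ coCartesian) in $\DD(\cdot)\to\SSS^{\op}(\cdot)$. Since $\Xi=(\downarrow,\uparrow,\downarrow)$, these are exactly the morphisms of type $2$ (resp.\ type $3$) of $\twc{I}$ in the sense of \ref{PARTW}. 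The pullback functor $(\pi_{23}^*f)^\bullet$ is computed pointwise, its value at an object $x\in\twc{I}$ being the pullback along the component $f_{\pi_{23}(x)}$ in $\SSS^{\op}(\cdot)$ by the compatibility of pullback with the restriction functors $x^*$ built into (FDer0). Hence, for a fixed such $\xi\colon x\to y$, it suffices to analyse the commutative square in $\SSS^{\op}(\cdot)$ formed by the two components of $f$ at $x$ and $y$ together with the transition morphisms $\pi_{23}^*(S^{\op})(\xi)$ and $\pi_{23}^*(T^{\op})(\xi)$.

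For the first assertion I would invoke the combinatorics of \ref{PARTW3}: a type $2$ morphism $\xi$ of $\twc{I}$ is sent by $\pi_{23}$ to the opposite of a type $1$ morphism of $\tw{I}$, so the square above is just the naturality square of $f$, which commutes unconditionally. The claim then reduces to the purely formal fact that pullback functors preserve Cartesian arrows: writing $a,a'$ for the components of $f$ and $s,t$ for the two transition morphisms (so that $a's=ta$), if $x^*\mathcal{E}\to y^*\mathcal{E}$ is the Cartesian lift of $t$ then $s^\bullet(a')^\bullet(y^*\mathcal{E})=(a's)^\bullet(y^*\mathcal{E})=(ta)^\bullet(y^*\mathcal{E})=a^\bullet t^\bullet(y^*\mathcal{E})=a^\bullet(x^*\mathcal{E})$, exhibiting the transition morphism of $(\pi_{23}^*f)^\bullet\mathcal{E}$ along $\xi$ as Cartesian over $s$. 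This needs no hypothesis on $f$, matching the word ``always''. For the second assertion the relevant $\xi$ are the type $3$ morphisms $\zeta$ of $\twc{I}$, which by \ref{PARTW3} map under $\pi_{23}$ to opposites of type $2$ morphisms $\sigma$ of $\tw{I}$. Here the hypothesis enters: type $2$ admissibility of $f$ says precisely that the naturality square of $f$ over such a $\sigma$ is a multipullback in $\mathcal{S}$ (cf.\ \ref{PARALTDIASCOR}). The assertion is that $(\pi_{23}^*f)^\bullet$ carries the coCartesian lift $x^*\mathcal{E}\to y^*\mathcal{E}$ of $s$ to the coCartesian lift of $t$, i.e.\ that the base-change morphism relating $t_\bullet a^\bullet$ and $(a')^\bullet s_\bullet$, evaluated at $x^*\mathcal{E}$, is an isomorphism; since $f$ is a $1$-ary morphism this is exactly the base-change isomorphism attached to the Cartesian square just produced, which holds by multi-base-change (condition~$2$ of Theorem~\ref{MAINTHEOREM1}, i.e.\ the case $n=1$ of Definition~\ref{DEFMULTIBASECHANGE}).

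The main obstacle I anticipate is bookkeeping rather than conceptual: one must track the variance carefully through the identifications $(\tw{I})^{\op}={}^{\uparrow\downarrow}I$ and through the projections from $\twc{I}$, so that the naturality square produced by a type $3$ morphism of $\twc{I}$ is oriented as the Cartesian square required by Definition~\ref{DEFMULTIBASECHANGE}, and one must confirm that the pointwise description of $(\pi_{23}^*f)^\bullet$ together with the restriction functors genuinely reduces the global statement to the single-square statement. The first assertion, by contrast, is entirely formal once the type-$2$/type-$1$ correspondence is in place.
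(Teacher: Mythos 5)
Your proof follows the route the paper intends: the paper's entire proof is ``This follows immediately from base-change and from the definition of type 2 admissible,'' and your two fiber-level arguments are exactly the right expansion of that sentence. Your combinatorial bookkeeping is also correct: the morphisms of $\twc{I}$ killed by $\pi_{13}$ (resp.\ $\pi_{12}$) are those of type $2$ (resp.\ type $3$), and $\pi_{23}$ sends them to opposites of type $1$ (resp.\ type $2$) morphisms of $\tw{I}$, so the $\pi_{12}$-coCartesian half is precisely multi-base-change (Definition~\ref{DEFMULTIBASECHANGE}, case $n=1$) applied to the naturality squares which type~$2$ admissibility (cf.\ \ref{PARALTDIASCOR}) declares to be multipullbacks, while the $\pi_{13}$-Cartesian half is formal pasting and cancellation of Cartesian arrows and needs no hypothesis on $f$.

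There is, however, one genuine gap in your justification: the step on which both halves rest --- that $(\pi_{23}^*f)^\bullet$ is computed pointwise, equivalently that each evaluation $x^*$ carries the Cartesian morphism $(\pi_{23}^*f)^\bullet\mathcal{E}\to\mathcal{E}$ to a Cartesian morphism of $\DD(\cdot)$ --- is not ``built into (FDer0).'' In this axiomatics, (FDer0 right) guarantees that $\alpha^*$ preserves Cartesian $1$-morphisms only for \emph{opfibrations} $\alpha$, and the inclusion $x\colon\cdot\to\twc{I}$ of an object is essentially never an opfibration; the asymmetry with (FDer0 left), which does hold for all functors (and which is why the companion Lemma~\ref{LEMMACARTPRES} on the pushforward side has no such issue), is deliberate in this theory. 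The compatibility you need is nevertheless true for $1$-ary morphisms in a left \emph{and} right fibered multiderivator, but it requires an argument rather than a citation of (FDer0): for instance, using (FDer3 left) and Yoneda, the exchange $\alpha^*f^\bullet\to(\alpha^*f)^\bullet\alpha^*$ is invertible if and only if the exchange $\alpha_!(\alpha^*f)_\bullet\to f_\bullet\alpha_!$ is, and the latter can be checked after applying $j^*$ for all $j$ by (Der2), where it follows by combining (FDer4 left), naturality, (FDer0 left), and (FDer5 left) applied to the projection of the comma category to the point (which \emph{is} an opfibration). With that supplement, or with a reference to the corresponding $1$-ary statement in the theory of fibered multiderivators of \cite{Hor15}, your proof is complete and agrees in substance with the paper's.
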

\begin{proof}This follows immediately from base-change and from the definition of type 2 admissible. 
\end{proof}

\begin{LEMMA}\label{LEMMACARTPRES}
Under the conditions of Theorem~\ref{MAINTHEOREM1},
let $S_1, \dots, S_n, T: \tw{I} \rightarrow \mathcal{S}$ be admissible diagrams and let
$g: S_1, \dots, S_n \rightarrow T$ be any multimorphism in $\SSS(\tw{I})$.
Then the functor
\[ (\pi_{23}^*g)_\bullet: \DD(I)_{\pi_{23}^*{S_1^{\op}}} \times \cdots \times \DD(I)_{\pi_{23}^*{S_n^{\op}}} \rightarrow \DD(I)_{\pi_{23}^*T^{\op}}   \]
maps always $\pi_{12}$-coCartesian objects to $\pi_{12}$-coCartesian objects, and maps
$\pi_{13}$-Cartesian objects to $\pi_{13}$-Cartesian objects if $g$ is type 1 admissible. 
\end{LEMMA}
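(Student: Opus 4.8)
The plan is to reduce the statement to a point-wise assertion in the underlying bifibration $\DD(\cdot) \to \SSS(\cdot) = \mathcal{S}^{\op}$ and then invoke the two relevant compatibilities separately: plain functoriality of the multi-pushforward for the coCartesian half, and multi-base-change (condition~2 of Theorem~\ref{MAINTHEOREM1}) for the Cartesian half. First I would fix objects $\mathcal{E}_l \in \DD(\twc I)_{\pi_{23}^* S_l^{\op}}$ and set $\mathcal{F} := (\pi_{23}^* g)_\bullet(\mathcal{E}_1, \dots, \mathcal{E}_n)$. For any object $x = (i \to j \to k)$ of $\twc I$ the evaluation $x^*\colon \DD(\twc I) \to \DD(\cdot)$ is pullback along the inclusion of a point and hence, by the second half of (FDer0 left), preserves coCartesian $1$-morphisms; applying it to the coCartesian lift of $\pi_{23}^* g$ defining $\mathcal{F}$ yields a canonical isomorphism $x^* \mathcal{F} \cong (g_x)_\bullet(x^*\mathcal{E}_1, \dots, x^*\mathcal{E}_n)$, where $g_x$ is the multimorphism obtained by evaluating $g$ at $\pi_{23}(x)$. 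Naturally in $\mu\colon x \to y$, the structure morphism $\DD(\mu)\colon x^*\mathcal{F} \to y^*\mathcal{F}$ is then identified with the map on point-wise pushforwards induced by the $\DD(\mu)\colon x^*\mathcal{E}_l \to y^*\mathcal{E}_l$ and the naturality square of $g$ at $\mu$. Thus both $(co)$Cartesianity conditions on $\mathcal{F}$ become assertions about a single $\mu$.

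For the coCartesian half, a morphism $\mu$ collapsing under $\pi_{12}$ is of type $3$, so its image under $\pi_{23}$ varies only in the last slot, and each $x^*\mathcal{E}_l \to y^*\mathcal{E}_l$ is coCartesian, i.e. $y^*\mathcal{E}_l \cong (s_l)_\bullet\, x^*\mathcal{E}_l$ for the induced $1$-ary maps $s_l$. Since $g$ is a multimorphism of diagrams, the square $g_y \circ (s_1, \dots, s_n) = t \circ g_x$ commutes on the nose, with $t$ the induced map on $T$; functoriality of the pushforward in the $1$-opfibered multicategory $\DD(\cdot)$ then gives $y^*\mathcal{F} \cong (g_y)_\bullet((s_1)_\bullet x^*\mathcal{E}_1, \dots) \cong t_\bullet (g_x)_\bullet(x^*\mathcal{E}_1, \dots) \cong t_\bullet\, x^*\mathcal{F}$, the composite being the coCartesian unit. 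Hence $\DD(\mu)$ is coCartesian, and no admissibility hypothesis is required.

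For the Cartesian half, a morphism $\mu$ collapsing under $\pi_{13}$ is of type $2$ and corresponds under $\pi_{23}$ to a type $1$ morphism $\xi$ of $\tw I$ (up to the passage to the opposite built into $\pi_{23}$). Now each $x^*\mathcal{E}_l \to y^*\mathcal{E}_l$ is Cartesian, i.e. $x^*\mathcal{E}_l \cong (s_l)^\bullet y^*\mathcal{E}_l$, and I want $(g_x)_\bullet((s_1)^\bullet y^*\mathcal{E}_1, \dots) \cong t^\bullet (g_y)_\bullet(y^*\mathcal{E}_1, \dots)$. Because $g$ is \emph{type $1$ admissible}, the naturality square of $g$ at $\xi$ is a multipullback in $\mathcal{S}$, which is precisely the configuration in which multi-base-change (Definition~\ref{DEFMULTIBASECHANGE}) asserts the interchange isomorphism $(g_x)_\bullet(-, \dots, (s_l)^\bullet -, \dots) \cong t^\bullet (g_y)_\bullet(-, \dots)$ in a single slot. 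Decomposing $(s_1, \dots, s_n)$ into its $n$ one-slot factors and pasting the resulting multipullbacks (Lemma~\ref{LEMMATYPE12}), I would apply multi-base-change $n$ times to obtain the full isomorphism, so that $\DD(\mu)$ is Cartesian.

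The one genuinely delicate point is this second step: one must verify that the multipullback furnished by type $1$ admissibility is exactly the square to which Definition~\ref{DEFMULTIBASECHANGE} applies after the single-slot decomposition. This requires careful bookkeeping of the two independent reversals of variance — the $\op$ on the $S_l$ and $T$ introduced by the $\pi_{23}^*(-^{\op})$ in the definition of $\EE$, and the identification ${}^{\uparrow\downarrow}I = (\tw I)^{\op}$ hidden inside $\pi_{23}$ — so that the Cartesian legs in $\DD$ genuinely correspond to the pullback maps $f^\bullet$ of the multi-base-change square (and not to pushforwards). Once this matching is pinned down, both halves follow formally, exactly dually to the proof of Lemma~\ref{LEMMACOCARTPRES}, where the type $2$ condition plays the role that type $1$ admissibility plays here.
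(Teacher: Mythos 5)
Your proof is correct and takes essentially the same approach as the paper, whose entire proof is the one-line assertion that the lemma ``follows immediately from multi-base-change and from the definition of type 1 admissible'': your pointwise reduction via (FDer0 left), the plain-functoriality argument for the $\pi_{12}$-coCartesian half (which indeed needs no admissibility), and the slot-by-slot application of multi-base-change for the $\pi_{13}$-Cartesian half supply exactly the details that assertion suppresses. One cosmetic point: the pasting of one-slot multipullbacks you invoke is the standard composition property of multipullback squares in $\mathcal{S}$ --- the same fact underlying the proof of Lemma~\ref{LEMMATYPE12} --- rather than Lemma~\ref{LEMMATYPE12} itself, which concerns composition of type~$i$ admissible morphisms of diagrams.
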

\begin{proof}This follows immediately from multi-base-change and from the definition of type 1 admissible. 
\end{proof}

\begin{PAR}
Recall that a diagram $I$ is called contractible, if  
\[ \id \Rightarrow p_{I,*} (p_I)^*, \]
or equivalently 
\[ p_{I,!} (p_I)^* \Rightarrow \id,  \]
is an isomorphism for all derivators. Cisinski showed that this is the case if and only if $N(I)$ is weakly contractible in the sense of simplicial sets. 
For instance, any diagram possessing a final or initial object is contractible. The following lemma was shown in \cite{Hor15} for the case of all contractible diagrams for a restricted class of stable derivators. 
We will only need the mentioned special case which is easy to prove in full generality:  
\end{PAR}

\begin{LEMMA}\label{LEMMAX}
If $\DD$ is a left  derivator and $I$ has a final object, or $\DD$ is a right derivator and $I$ has an initial object, then the functor
\[ p_I^*: \DD(\cdot) \rightarrow \DD(I)^{\cart}=\DD(I)^{\cocart} \]
is an equivalence.
\end{LEMMA}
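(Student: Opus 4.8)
The plan is to reduce the statement to the formal behaviour of a pre-derivator as a $2$-functor, together with the conservativity axiom (Der2). First I would observe that, $\DD$ being a plain derivator, the base is trivial, so a morphism of $\DD(\cdot)$ is Cartesian exactly when it is an isomorphism, exactly when it is coCartesian. Hence $\DD(I)^{\cart} = \DD(I)^{\cocart}$ is simply the full subcategory of those $\mathcal{E} \in \DD(I)$ whose structure morphisms $\DD(\mu)\colon i^*\mathcal{E} \to j^*\mathcal{E}$ are isomorphisms for every $\mu\colon i \to j$ in $I$; since the underlying diagram of $p_I^* X$ is constant, $p_I^*$ does factor through this subcategory. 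I will carry out the case in which $I$ has a final object $t$; the case of an initial object is entirely dual.

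The key input is that a final object $t$ exhibits $t\colon \cdot \to I$ as a \emph{right} adjoint of $p_I\colon I \to \cdot$: the counit $p_I \circ t = \id$ is the identity, and the unit $\id_I \Rightarrow t \circ p_I$ has component at $i$ the unique arrow $i \to t$. Because $\DD$ is contravariant on functors and covariant on natural transformations, applying it to this adjunction yields an adjunction
\[ t^* \dashv p_I^* \]
whose counit $t^* p_I^* \Rightarrow \id_{\DD(\cdot)}$ is once more the identity (it is $\DD$ applied to $p_I t = \id$), and whose unit I write $\eta\colon \id_{\DD(I)} \Rightarrow p_I^* t^*$. Since the counit is invertible, the right adjoint $p_I^*$ is fully faithful, and so remains after corestriction to $\DD(I)^{\cart}$.

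It then remains to prove essential surjectivity onto $\DD(I)^{\cart}$, for which it suffices that the unit $\eta_{\mathcal{E}}\colon \mathcal{E} \to p_I^* t^* \mathcal{E}$ be an isomorphism whenever $\mathcal{E}$ is Cartesian; then $\mathcal{E} \cong p_I^*(t^*\mathcal{E})$ lies in the image, with $t^*$ a quasi-inverse. Here I would invoke (Der2): $\dia$ is conservative, so it is enough to check that $i^*\eta_{\mathcal{E}}$ is invertible for each $i \in I$. Evaluating gives $i^* p_I^* t^* \mathcal{E} = (p_I \circ i)^* t^* \mathcal{E} = t^*\mathcal{E}$, and by compatibility of $\DD$ with the whiskering by $i$ of the unit $\id_I \Rightarrow t p_I$, the map $i^*\eta_{\mathcal{E}}\colon i^*\mathcal{E} \to t^*\mathcal{E}$ is exactly the structure morphism $\DD(i \to t)$ of $\mathcal{E}$ along the unique arrow $i \to t$, which is an isomorphism because $\mathcal{E}$ is Cartesian.

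The one non-formal step, and the place I expect to need the most care, is this final identification of the component $i^*\eta_{\mathcal{E}}$ with the transition map $\DD(i \to t)$; it is a bookkeeping check in the $2$-functoriality of $\DD$ (strict in $1$-morphisms, pseudo-functorial in $2$-morphisms), after which conservativity closes the argument at once. Everything else is the formal transport of the adjunction $p_I \dashv t$ through $\DD$, so that only (Der2) and this $2$-functoriality really enter; the initial-object case is obtained by the evident dualisation (using $s \dashv p_I$ for an initial object $s$ in place of $p_I \dashv t$), the two cases being handled symmetrically.
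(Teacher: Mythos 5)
Your proof is correct and is essentially the paper's own argument: both rest on transporting the adjunction $p_I \dashv t$ in $\Cat$ through the 2-functor $\DD$ to get an adjunction $t^* \dashv p_I^*$ whose counit is (canonically isomorphic to) the identity and whose unit, evaluated componentwise, consists exactly of the transition morphisms $i^*\mathcal{E} \to t^*\mathcal{E}$, hence is invertible precisely on (co)Cartesian objects. The only difference is packaging: the paper uses the left-derivator structure to produce $p_{I,!}$ and then identifies $p_{I,!} \cong t^*$ by uniqueness of adjoints, whereas you build $t^* \dashv p_I^*$ directly, so your version never actually invokes the Kan-extension axioms, only (Der2).
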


Note that Cartesian=coCartesian here only means that all morphisms in the underlying diagram in $\Hom(I, \DD(\cdot))$ are isomorphisms. 

\begin{proof}
Assume we have a left derivator and $I$ has a final object (the other statement is dual). It suffices to show that
 the counit
\[   p_{I,!} p_I^*  \Rightarrow \id  \]
is an isomorphism and that the unit
\[ \id \Rightarrow p_I^* p_{I,!}  \]
is an isomorphism when restricted to the subcategory of Cartesian objects. 
Since $I$ has a final object $i$ we have an isomorphism
\[ p_{I,!} \cong i^* \]
and the unit and counit become the morphisms induced by the natural transformations $p_I \circ i = \id$ and $\id \Rightarrow i \circ p_I$.
Hence we have
\[   i^* p_I^*  = \id  \]
and the morphism
\[ \id  \Rightarrow  p_I^* i^*    \]
is an isomorphism on (co)Cartesian objects by definition of (co)Cartesian. 
\end{proof}

\begin{KOR}\label{LEMMAY}
If $\DD$ is a left and right derivator and $I$ has a final {\em or} initial object then
\[ p_I^*: \DD(\cdot) \rightarrow \DD(I)^{\cart}=\DD(I)^{\cocart} \]
is an equivalence. The restrictions of $p_{I,!}$, and $p_{I,*}$, coincide (up to isomorphism) and constitute an inverse equivalence.
\end{KOR}

\begin{LEMMA}\label{LEMMA2}
Under the assumptions of Theorem~\ref{MAINTHEOREM1},
if $\alpha: I \rightarrow J$ is an opfibration then
the functors
\[ \xymatrix{ \DD(\twc{J} \times_J I)_{\pi_{23}^*(S^{\op})}^{\pi_{12}-\cocart, \pi_{13}-\cart} \ar[r]^-{q_2^*} & \DD(\twc{J} \times_{(\tw{J})} \tw{I})^{\pi_{12}-\cocart, \pi_{13}-\cart}_{\pi_{23}^* (S^{\op})} 
\ar[r]^-{q_1^*} &  \DD(\twc{I})^{\pi_{12}-\cocart, \pi_{13}-\cart}_{\pi_{23}^*(S^{\op})}  } \]
are equivalences.
In particular (applying this to $J=\cdot$ and variable $I$) we have an equivalence of fibers:
\[  \EE_S \cong \DD_S. \]
\end{LEMMA}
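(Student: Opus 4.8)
The plan is to prove separately that $q_1^*$ and $q_2^*$ are equivalences onto the indicated subcategories; their composite is then an equivalence, and the final assertion follows by specializing to $J=\cdot$. In that case $\twc{J}$ collapses to a point, the tower reduces to $\twc{I}\xrightarrow{q_1}\tw{I}\xrightarrow{q_2}I$, and the right-hand category becomes the value $\DD_S(I)$ of the fibre derivator, naturally in $I$, yielding $\EE_S\cong\DD_S$.

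I would first collect the structural input. By Lemma~\ref{LEMMAOPFIB}, $q_1$ is an opfibration and $q_2$ is a fibration, both lying over $\twc{J}$ via $\twc{\alpha}\colon\twc{I}\to\twc{J}$, and their fibres are the comma categories $i_3\times_{/I_{j_3}}I_{j_3}$, resp.\ $(i_2\times_{/I_{j_2}}I_{j_2})^{\op}$, which have an initial, resp.\ a terminal object. The key compatibility is that the base $\pi_{23}^*(S^{\op})$ on $\twc{I}$ factors through $\twc{\alpha}$ (its value at an object depends only on $\alpha(i_2)\to\alpha(i_3)$), hence is pulled back from $B_1:=\twc{J}\times_{\tw{J}}\tw{I}$ and from $B_2:=\twc{J}\times_J I$ and is \emph{constant} along the fibres of $q_1$ and $q_2$. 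I would also record the bookkeeping of directions: the fibres of $q_1$ move $i_3$ (``type $3$''), those of $q_2$ move $i_2$ (``type $2$''), so that the $\pi_{12}$-coCartesian condition governs the $q_1$-fibres and the $\pi_{13}$-Cartesian condition governs the $q_2$-fibres, while the remaining parts of both conditions are pulled back along $\twc{\alpha}$ from $B_1$, resp.\ $B_2$.

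For $q_1$ I would argue fibrewise. On a fibre $F_b=q_1^{-1}(b)$ the base is constant, so a morphism of $F_b$ is coCartesian exactly when its underlying morphism is invertible; the $\pi_{12}$-coCartesian objects therefore restrict on $F_b$ to the subcategory $\DD(F_b)^{\cocart}=\DD(F_b)^{\cart}$ of Corollary~\ref{LEMMAY}. Since $F_b$ has an initial object, Corollary~\ref{LEMMAY}, applied to the usual left-and-right derivator given by the fibre of $\DD$ over the constant base, shows $p_{F_b}^*$ is an equivalence onto this subcategory, with inverse the restriction of $p_{F_b,!}$. The global candidate inverse is the relative left Kan extension $q_{1,!}$, which exists by (FDer3 left') because $q_1$ is an opfibration and, by (FDer4 left'), is computed fibrewise by the colimit over $F_b$, agreeing on coCartesian objects with $p_{F_b,!}$. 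Testing unit and counit pointwise via (Der2) assembles these fibrewise statements into $q_{1,!}q_1^*\cong\id$ and $q_1^*q_{1,!}\cong\id$ on $\pi_{12}$-coCartesian objects. Because the $\pi_{13}$-Cartesian condition and the $\twc{J}$-part of the $\pi_{12}$-coCartesian condition are pulled back along $\twc{\alpha}$, they are preserved and reflected by $q_1^*$ and $q_{1,!}$, so $q_1^*$ is an equivalence of the full subcategories in the statement. The argument for $q_2$ is strictly dual: it is a fibration whose fibres carry terminal objects, the $\pi_{13}$-Cartesian condition restricts fibrewise to $\DD(F)^{\cart}$, and one uses the right-derivator half of Corollary~\ref{LEMMAY} together with the relative right Kan extension $q_{2,*}$ (via (FDer3 right') and (FDer4 right'), $q_2$ being a fibration).

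I expect the main obstacle to be exactly this globalization: verifying that the Kan-extension inverses preserve \emph{both} (co)Cartesianity conditions and that the unit and counit are isomorphisms on the relevant subcategories. This rests on cleanly separating, within each of the $\pi_{12}$-coCartesian and $\pi_{13}$-Cartesian conditions, the part collapsed by the relevant $q_i$ (controlled by the contractible fibres and Corollary~\ref{LEMMAY}) from the part pulled back from $\twc{J}$ (handled formally), and on the constancy of $\pi_{23}^*(S^{\op})$ along the fibres; granting these, every remaining check is a pointwise reduction through (FDer4) to Corollary~\ref{LEMMAY}.
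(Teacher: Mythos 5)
Your proposal is correct and takes essentially the same route as the paper's own proof: both treat $q_1^*$ and $q_2^*$ separately, use Lemma~\ref{LEMMAOPFIB} to identify $q_1$ as an opfibration and $q_2$ as a fibration with comma-category fibers, take $q_{1,!}$ resp.\@ $q_{2,*}$ as candidate inverses, reduce the unit/counit being isomorphisms to a fibrewise statement (the base being constant and the relevant (co)Cartesianity condition becoming absolute on each fiber), and conclude via the initial/final objects of the fibers using Lemma~\ref{LEMMAX} and Corollary~\ref{LEMMAY}. The only cosmetic difference is that you invoke the axioms (FDer3/4 left'/right') by name where the paper simply uses the existence and pointwise computation of the relative Kan extensions.
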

\begin{proof}
We first treat the case of $q_1^*$.
We know by Lemma~\ref{LEMMAOPFIB} that $q_1$ is an opfibration with fibers of the form $i_3 \times_{/I_{j_3}} I_{j_3}$.
Neglecting the conditions of being (co)Cartesian, we know that $q_1^*$ has a left adjoint:
\[ q_{1,!}: \DD(\twc{I})_{\pi_{23}^*(S^{\op})}
\rightarrow \DD(\twc{J} \times_{(\tw{J})} \tw{I})_{\pi_{23}^* (S^{\op})} \]
We will show that the unit and counit
\[ \id \Rightarrow q_1^* q_{1,!}  \qquad  q_{1,!} q_1^*  \Rightarrow \id  \]
are isomorphisms {\em when restricted to the subcategory of $\pi_{12}$-coCartesian objects}. Since the conditions of being $\pi_{13}$-Cartesian match under $q_1^*$ this shows the first assertion. 
Since $q_1$ is an opfibration this is the same as to show that for any $\gamma \in \twc{J} \times_{(\tw{J})} \tw{I}$ with fiber $F = i_3 \times_{/I_{j_3}} I_{j_3}$ the unit and counit
\begin{equation}\label{eq7} \id \Rightarrow p_F^* p_{F,!}  \qquad  p_{F,!} p_F^*  \Rightarrow \id  \end{equation}
are isomorphisms when restricted to the subcategory of $\pi_{12}$-coCartesian objects. Since $\pi_{12}$ maps all morphisms in the fiber $F$ to an identity,  we have to show that the morphisms in (\ref{eq7}) are isomorphisms when restricted to (absolutely) (co)Cartesian objects. This follows from the fact that  $F$ has an initial object (Lemma~\ref{LEMMAX} and Corollary~\ref{LEMMAY}).

We now treat the case of $q_2^*$.
We know by Lemma~\ref{LEMMAOPFIB} that $q_2$ is a fibration with fibers of the form $(i_2 \times_{/I_{j_2}} I_{j_2})^{\op}$.
Neglecting the conditions of being (co)Cartesian, we know that $q_1^*$ has a right adjoint:
\[ q_{2,*}: \DD(\twc{J} \times_{(\tw{J})} \tw{I})_{\pi_{23}^* (S^{\op})}
\rightarrow \DD(\twc{J} \times_J I)_{\pi_{23}^*(S^{\op})} \]
We will show that the unit and counit
\[ \id \Rightarrow q_{2,*} q_2^*   \qquad  q_2^* q_{2,*}  \Rightarrow \id  \]
are isomorphisms {\em when restricted to the subcategory of $\pi_{13}$-Cartesian objects}. Since the conditions of being $\pi_{12}$-coCartesian match under $q_2^*$ this shows the second assertion. 
Since $q_2$ is a fibration this is the same as to show that for any $\gamma \in \twc{J} \times_J I$ with fiber $F=(i_2 \times_{/I_{j_2}} I_{j_2})^{\op}$ the the unit and counit
\begin{equation}\label{eq8} \id \Rightarrow  p_{F,*} p_F^*  \qquad  p_F^* p_{F,*}  \Rightarrow \id  \end{equation}
are isomorphisms when restricted to the subcategory of $\pi_{13}$-Cartesian objects. Since $\pi_{13}$ maps all morphisms in the fiber $(i_2 \times_{/I_{j_2}} I_{j_2})^{\op}$ to an identity, this means that we have to show that (\ref{eq8}) are isomorphisms when restricted to (absolutely) (co)Cartesian objects. This follows from the fact that  $(i_2 \times_{/I_{j_2}} I_{j_2})^{\op}$ has a final object (Lemma~\ref{LEMMAX} and Corollary~\ref{LEMMAY}). 
\end{proof}

\begin{LEMMA}~\label{LEMMA4}
Let the situation be as in Theorem~\ref{MAINTHEOREM1} and let $p': \EE \rightarrow \SSS^{\cor}$ be the morphism of pre-2-multiderivators defined in \ref{DEFE}. 
Let $\alpha: I \rightarrow J$ be an opfibration. Then $\alpha^*: \EE(J)_{\alpha^*S} \rightarrow \EE(I)_S$ has a left adjoint $\alpha_!^{(S)}$.
\end{LEMMA}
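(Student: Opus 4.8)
The plan is to factor the functor $\alpha^* = (\twc{\alpha})^*$ through the projection of a pull-back, which is an opfibration, and then to reduce by means of Lemma~\ref{LEMMA2} to the existence of a left adjoint of a genuine $\DD$-pullback, which is supplied by the fact that $\DD \to \SSS^{\op}$ is a \emph{left} fibered multiderivator.

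First I would form the pull-back square
\[ \xymatrix{ \twc{J} \times_J I \ar[r] \ar[d]_p & I \ar[d]^\alpha \\ \twc{J} \ar[r]_{\pi_1} & J } \]
recording that, since $\alpha$ is an opfibration, so is $p$, with fibre $I_{j_1}$ over an object $j_1 \to j_2 \to j_3$ of $\twc{J}$. By the universal property, the canonical functor of Lemma~\ref{LEMMAOPFIB} factors $\twc{\alpha}$ as $\twc{I} \xrightarrow{q_1} \twc{J}\times_{(\tw{J})}\tw{I} \xrightarrow{q_2} \twc{J}\times_J I \xrightarrow{p} \twc{J}$, so that on the relevant fibres $(\twc{\alpha})^* = q_1^* \circ q_2^* \circ p^*$. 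Because the base diagram on $\twc{J} \times_J I$ is $p^*$ of the base diagram $\pi_{23}^*(S^{\op})$ on $\twc{J}$, the functor $p^*$ carries $\EE(J)_S = \DD(\twc{J})^{\pi_{12}-\cocart,\pi_{13}-\cart}_{\pi_{23}^*(S^{\op})}$ into the corresponding bi-(co)Cartesian subcategory of $\DD(\twc{J}\times_J I)$ (this is the $p^*$-analogue of Lemmas~\ref{LEMMACOCARTPRES} and \ref{LEMMACARTPRES}, using (FDer0 left) and base change). By Lemma~\ref{LEMMA2} the functors $q_1^*$ and $q_2^*$ are equivalences between the bi-(co)Cartesian subcategories, so constructing a left adjoint of $\alpha^*$ reduces to constructing one of $p^*$ restricted to these subcategories.

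Next I would invoke (FDer3 left) for $\DD \to \SSS^{\op}$ to obtain a left adjoint $p_!$ of $p^*$ on the full categories $\DD(\twc{J}\times_J I)_{p^*\pi_{23}^*(S^{\op})} \to \DD(\twc{J})_{\pi_{23}^*(S^{\op})}$, computed fibrewise over the fibres $I_{j_1}$ by (FDer4 left). The decisive point is that $p_!$ preserves the two (co)Cartesianity conditions. Here the geometry is favourable: the morphisms of type $2$ (defining $\pi_{13}$-Cartesianity) and of type $3$ (defining $\pi_{12}$-coCartesianity) both keep $j_1$ fixed, hence leave the fibre $I_{j_1}$ of $p$ unchanged, so that $p_!$ is a constant-fibre homotopy colimit along each of them. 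Preservation of the $\pi_{12}$-coCartesian condition then follows from (FDer5 left) applied to the opfibration $p$, and preservation of the $\pi_{13}$-Cartesian condition follows from condition~1 of Theorem~\ref{MAINTHEOREM1}, which says precisely that the $1$-ary push-forwards realizing Cartesianity commute with the homotopy colimit $p_!$.

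Granting these preservations, $p_!$ restricts to a functor $\DD(\twc{J}\times_J I)^{\pi_{12}-\cocart,\pi_{13}-\cart} \to \EE(J)_S$ which is left adjoint to the restriction of $p^*$ (an adjunction between full subcategories stable under both functors restricts automatically). Composing with the quasi-inverses of the equivalences $q_1^*$, $q_2^*$ of Lemma~\ref{LEMMA2} then yields the desired left adjoint
\[ \alpha_!^{(S)} := p_! \circ (q_2^*)^{-1} \circ (q_1^*)^{-1} . \]
I expect the main obstacle to be the verification that $p_!$ preserves $\pi_{13}$-Cartesianity: this is the only genuinely non-formal step, and it is exactly where the hypothesis that $1$-ary push-forward commutes with homotopy colimits (condition~1) must enter, in complete analogy with the role of base change in Lemma~\ref{LEMMACOCARTPRES}.
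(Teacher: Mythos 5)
Your proposal is correct and takes essentially the same route as the paper's own proof: it reduces, via the equivalences of Lemma~\ref{LEMMA2}, to producing a left adjoint of $\pr_1^*\colon \DD(\twc{J})^{\pi_{12}-\cocart,\pi_{13}-\cart}_{\pi_{23}^*(S^{\op})} \to \DD(\twc{J}\times_J I)^{\pi_{12}-\cocart,\pi_{13}-\cart}_{\pi_{23}^*(S^{\op})}$, and then checks fibrewise (the fibres being constantly $I_{j_1}$, since the relevant morphism types fix $j_1$) that $\pr_{1,!}$ preserves both (co)Cartesianity conditions. Your attribution of the two preservation steps is exactly the paper's: the $\pi_{13}$-Cartesian case is the non-formal one using condition~1 of Theorem~\ref{MAINTHEOREM1} (pullback $f^\bullet$ commutes with homotopy colimits), while the $\pi_{12}$-coCartesian case follows formally from the commutation of push-forward $g_\bullet$ with homotopy colimits.
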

\begin{proof}
We have to show that
\[ (\twc{\alpha})^*: \DD(\twc{J})^{\pi_{12}-\cocart, \pi_{13}-\cart}_{\pi_{23}^* (S^{\op})} \rightarrow \DD(\twc{I})^{\pi_{12}-\cocart, \pi_{13}-\cart}_{\pi_{23}^* (S^{\op})} \]
has a left adjoint. The right hand side category is by Lemma~\ref{LEMMA2} equivalent to  \[ \DD((\twc{J}) \times_J I)^{\pi_{12}-\cocart, \pi_{13}-\cart}_{\pi_{23}^* S}, \]
hence we have to show that
\[ \pr_1^*: \DD(\twc{J})^{\pi_{12}-\cocart, \pi_{13}-\cart}_{\pi_{23}^* (S^{\op})} \rightarrow \DD((\twc{J}) \times_J I)^{\pi_{12}-\cocart, \pi_{13}-\cart}_{\pi_{23}^* (S^{\op})} \]
has a left adjoint. By assumption the functor
\[ \pr_1^*: \DD(\twc{J})_{\pi_{23}^*  (S^{\op})} \rightarrow \DD((\twc{J}) \times_J I)_{\pi_{23}^* (S^{\op})} \]
has a left adjoint $\pr_{2,!}$. We claim that it maps $\pi_{12}$-coCartesian objects to $\pi_{12}$-coCartesian objects and $\pi_{13}$-Cartesian objects to $\pi_{13}$-Cartesian objects. The statement then follows. 

Let $\kappa: \nu \rightarrow \nu'$
\[ \xymatrix{
j_1 \ar[r]^{\nu_1} \ar@{=}[d] & j_2 \ar[r]^{\nu_2} & \ar@{=}[d]  j_3  \\
j_1 \ar[r]_{\nu_1'} & j_2' \ar[u]_{\kappa_2} \ar[r]_{\nu_2'} & j_3
} \]
be a morphism in $\twc{J}$ such that $\pi_{13}$ maps it to an identity. Denote \[ f:=S(\pi_{23}(\kappa)): S(\pi_{23}(\nu)) \rightarrow S(\pi_{23}(\nu')) \] the corresponding morphism in $\SSS(\cdot)^{\op}$.
Denote by $(\nu)$, resp.\@ $(\nu')$ the inclusion of the one element category mapping to $\nu$, resp.\@ $\nu'$ in $\twc{I}$. 
We have to show that the induced map
\[ (\nu)^* \pr_{1,!} \rightarrow f^\bullet (\nu')^* \pr_{1,!} \]
is an isomorphism on $\pi_{13}$-Cartesian objects. Since $\pr_1$ is an opfibration, this is the same as to show that the natural morphism
\[  p_! \iota_{\nu}^* \rightarrow f^\bullet p_! \iota_{\nu'}^*  \]
is an isomorphism on $\pi_{13}$-Cartesian objects where $p: I_{j_1} \rightarrow \cdot$ is the projection. Since $f^\bullet$ commutes with homotopy colimits by assumption 1.\@ of Theorem~\ref{MAINTHEOREM1}, this is to say that
\[ p_! \iota_{\nu}^* \rightarrow p_!  (p^*f)^\bullet \iota_{\nu'}^*  \]
is an isomorphism. However the fibers over $\nu$ and $\nu'$ in $(\twc{J}) \times_J I$ are both equal to $I_{j_1}$ and the natural morphism
\[ \iota_{\nu}^* \rightarrow (p^*f)^\bullet \iota_{\nu'}^*  \]
is already an isomorphism on Cartesian objects by definition. 

Let $\kappa: \nu_1 \rightarrow \nu_2$
\[ \xymatrix{
	j_1 \ar[r]^{\nu_1} \ar@{=}[d] & j_2  \ar[r]^{\nu_2} & j_3 \ar[d] \\
j_1 \ar[r]_{\nu'_1} & j_2' \ar@{=}[u] \ar[r]_{\nu_2'} & j_3'
} \]

be a morphism in $\twc{J}$ such that $\pi_{12}$ maps it to an identity. And denote 
\[ g:=S(\pi_{23}(\kappa)): S(\pi_{23}(\nu)) \rightarrow S(\pi_{23}(\nu'))\]
 the corresponding morphism in $\SSS(\cdot)$.
Denote by $(\nu)$, resp.\@ $(\nu')$ the inclusion of the one element category mapping to $\nu$, resp.\@ $\nu'$. 
We have to show that the induced map
\[ g_\bullet (\nu)^* \pr_{1,!} \rightarrow  (\nu')^* \pr_{1,!} \]
is an isomorphism on $\pi_{12}$-coCartesian objects. This is the same as to show that the natural morphism
\[ g_\bullet p_! \iota_{\nu}^* \rightarrow  p_! \iota_{\nu'}^*  \]
is an isomorphism on $\pi_{12}$-coCartesian objects where $p: I_{j_1} \rightarrow \cdot$ is the projection. Since $g_\bullet$ commutes with homotopy colimits, this is to say that
\[ p_! (p^*g)_\bullet \iota_{\nu}^* \rightarrow p_!   \iota_{\nu'}^*  \]
is an isomorphism. However the fibers over $\nu$ and $\nu'$ in $(\twc{J}) \times_J I$ are both equal to $I_{j}$ and the natural morphism
\[ (p^*g)_\bullet  \iota_{\nu}^* \rightarrow \iota_{\nu'}^*  \]
is already an isomorphism on coCartesian objects by definition of coCartesian. 
\end{proof}

\begin{proof}[Proof of Theorem~\ref{MAINTHEOREM1}.] Using Theorem~\ref{MAINTHEOREMFIBDER2} we have to show the axioms (FDer0 left), (Der1), (Der2), (FDer3 left), (FDer4 left) and (FDer5 left) for the morphism of pre-2-multiderivators 
\[ \EE \rightarrow \SSS^{\cor, G, \oplax} \]
of Definition~\ref{DEFE}.  It is clear that  $\EE$ satisfies axioms (Der1) and (Der2) because $\DD$ satisfies them. Axiom (FDer0 left) holds by construction of $\EE$. 
Instead of Axiom (FDer3 left) it is sufficient to show Axiom (FDer3 left') which follows from Lemma~\ref{LEMMA4}. 
Axiom (FDer4 left') follows from the proof of Lemma~\ref{LEMMA4}. 
(FDer5 left) follows from the corresponding axiom for $\DD$ and the fact that pull-back along 1-ary morphisms in $\SSS^{\op}$ commutes with homotopy colimits as well, by assumption. 

Now assume that $\DD \rightarrow \SSS^{\op}$ has stable fibers. 
Let $I \in \Dia$ and $S \in \SSS^{\cor,G}(I)$. Then we have to show that 
\[ J \mapsto \EE(I \times J)_{\pr_I^* S}  \]
is a right derivator with domain $\Posf$. But for $J \rightarrow K$ we a commutative diagram in which the horizontal functors are equivalences: 
\[ \xymatrix{ \DD( (\twc{I}) \times J )^{\pi_{12}-\cocart, \pi_{13}-\cart}_{\pi_{23}^*(\pr_I^*S^{\op})} \ar[r]^-\sim\ar@{<-}[d]   & \DD( \twc{(I \times J) })^{\pi_{12}-\cocart, \pi_{13}-\cart}_{\pi_{23}^*(\pr_I^*S^{\op})} \ar@{<-}[d]  \\
\DD( (\twc{I}) \times K )^{\pi_{12}-\cocart, \pi_{13}-\cart}_{\pi_{23}^*(\pr_I^*S^{\op})} \ar[r]^-\sim &  \DD( \twc{(I \times K) })^{\pi_{12}-\cocart, \pi_{13}-\cart}_{\pi_{23}^*(\pr_I^*S^{\op})} 
 } \]
The left vertical functor has a right adjoint by assumption ignoring the (co)Cartesianity conditions. However, that functor preserves the conditions of being (co)Cartesian because all pull-back and push-forward functors are exact in this case and thus commute with all left and right homotopy Kan extensions with domain $\Posf$. 
\end{proof}

\begin{PAR}
Let $\alpha: K \rightarrow L$ be a functor in $\Dia$ and
let $\xi: (I_1, S_1), \dots, (I_n, S_n) \rightarrow (J, T)$ be a 1-morphism in $\Dia^{\cor}(\SSS^{\cor})$.
If we have a 1-opfibration and 2-opfibration 
\[ \Dia^{\cor}(\EE) \rightarrow \Dia^{\cor}(\SSS^{\cor}) \]
then the isomorphism of Lemma~\ref{LEMMAPROPDIACORS}, 3.\@ is transformed into an isomorphism
\[ (\alpha \times \id)^* \circ (\xi \times L)_\bullet \rightarrow  (\xi \times K)_\bullet \circ  ((\alpha \times \id)^*, \dots, (\alpha \times \id)^*) \]
which turns $K \mapsto (\xi \times K)_\bullet$ into a morphism of usual derivators  
\begin{equation}\label{eqpffib}
 \xi_\bullet:  \EE_{(I_1, S_1)} \times \cdots \times  \EE_{(I_n, S_n)} \rightarrow \EE_{(J, T)}  .
 \end{equation}
\end{PAR}

\begin{LEMMA} \label{LEMMACONT} The morphism of derivators 
(\ref{eqpffib}) is left exact in each variable, i.e.\@ the exchange
\[   (\xi \times_j L)_\bullet \circ_j (\alpha \times \id)_! \rightarrow (\alpha \times \id)_! \circ (\xi \times_j K)_\bullet   \]
is  an isomorphism for any $\alpha: K \rightarrow L$.
\end{LEMMA}
\begin{proof}
This follows from Lemma~\ref{LEMMAPROPDIACORS}, 4. 
\end{proof}

\begin{proof}[Proof of Theorem~\ref{MAINTHEOREM2}.]
The first assertion is a slight generalization of  \cite[Theorem 3.2.3 (left)]{Hor15}. 
Using Definition~\ref{DEFFIBDER2} of a left, resp.\@ right fibered multiderivator over pre-2-multiderivators we give a different slicker proof.
We have to show that, under the conditions of Theorem~\ref{MAINTHEOREM2}, the constructed 1-opfibration 
\[ \Dia^{\cor}(\EE) \rightarrow \Dia^{\cor}(\SSS^{\cor}) \]
is a 1-fibration as well. The conditions imply: 
\begin{enumerate}
\item $\Dia$ is infinite and $\EE$ and $\SSS^{\cor}$ are infinite,
\item the fibers of $\EE \rightarrow \SSS^{\cor}$  
are stable and perfectly generated infinite left derivators with domain $\Dia$, and also right derivators with domain (at least) $\Posf$.
\end{enumerate}
For 2.\@ note that by \cite[Lemma~4.7]{Hor15} it suffices to see the perfect generation for the categories $\EE(\cdot)_S$ which are the same as $\DD(\cdot)_S$. 
Any multimorphism in $(I_1, S_1), \dots, (I_n, S_n) \rightarrow (J, T)$ in $\Dia^{\cor}(\SSS^{\cor})$ gives a morphism between fibers 
\[ \EE_{I_1, S_1} \times \cdots \times  \EE_{I_n, S_n} \rightarrow \EE_{J, T}.  \]

Lemma~\ref{LEMMACONT} shows that this morphism commutes with homotopy colimits in each variable. Thus by  \cite[Theorem 3.2.1 (left)]{Hor15}
it has a right adjoint in each slot $j$, which, in particular, evaluated at $\cdot$ yields a right adjoint functor in the slot $j$:
\[ \EE(I_1)_{S_1}^{\op} \times \cdots \times \EE(J)_{T} \times \cdots \times \EE(I_n)_{S_n}^{\op} \rightarrow   \EE(I_j)_{S_j} \]
for each $j$. 
This establishes that the morphism
\[ \Dia^{\cor}(\EE) \rightarrow \Dia^{\cor}(\SSS^{\cor}) \]
is 1-fibered as well. 

The lax extension of this 1-fibration is given as follows. 
For each diagram $I$ we again specify a 1-fibered, and 2-opfibered multicategory with 1-categorical fibers  $\EE'(I) \rightarrow \SSS^{\cor, G,\lax}$.

The category 
\[ \EE'(I) \]
has the same objects as $\EE(I)$, i.e.\@ pairs $(S, \mathcal{E})$ consisting of an admissible diagram $S: \tw{I} \rightarrow \mathcal{S}$ and an object 
\[ \mathcal{E} \in \DD(\twc{I})_{\pi_{23}^* (S^{\op})}^{\pi_{12}-\cocart, \pi_{13}-\cart}. \]
The 1-morphisms are the morphisms in $\SSS^{\cor, G,\lax}(I)$, i.e.\@
lax morphisms, which can be given by a multicorrespondence 
\[ \xymatrix{
& A \ar[rd]^f \ar[ld]_g \\
(S_1, \dots, S_n) & & T
} \]
in which $f$ is type 2 admissible, and $g$ is arbitrary,
together with a morphism  
\[ \rho \in \Hom_{\EE(I)}\left((\mathcal{E}_1, S_1), \dots, (\mathcal{E}_n, S_n), (\mathcal{F}, T)\right) = \Hom_{\DD(\twc{I})_{\pi_{23}^*(T^{\op})}}\left((\pi_{23}^*f)^{\bullet }
(\pi_{23}^*g)_{\bullet} (\mathcal{E}_1, \dots, \mathcal{E}_n), \mathcal{F}\right).  \]

Note that the multivalued functor $(\pi_{23}^*g)_{\bullet}$ does not necessarily have values in the subcategory of $\pi_{13}$-Cartesian objects. 

A 2-morphism $(f, g, \rho) \Rightarrow (f', g', \rho')$ is given by a morphism of multicorrespondences
\[ \xymatrix{ 
& A \ar[dd]^h \\
X_1, \dots, X_n  \ar@{<-}[ur]^f \ar@{<-}[rd]_{f'}  &&Y  \ar@{<-}[ul]_{g} \ar@{<-}[ld]^{g'}  \\
& A' 
} \]
where $h$ is an arbitrary  morphism (which is automatically type 2 admissible, cf.\@ Lemma~\ref{LEMMATYPE12}) such that the diagram
\[ \xymatrix{
(\pi_{23}^*f)^{\bullet }
(\pi_{23}^*g)_{\bullet}(\mathcal{E}_1, \dots, \mathcal{E}_n) \ar[rrrd]^{\rho} \ar@{<-}[d]^\sim &\\
(\pi_{23}^*f')^{\bullet } (\pi_{23}^*h)^\bullet (\pi_{23}^*h)_\bullet (\pi_{23}^*g')_{\bullet}(\mathcal{E}_1, \dots, \mathcal{E}_n) \ar@{<-}[d] &&&  \mathcal{F} \\
(\pi_{23}^*f')^{\bullet } (\pi_{23}^*g')_{\bullet}(\mathcal{E}_1, \dots, \mathcal{E}_n) \ar[rrru]_{\rho'} 
} \]
commutes, where the lower left vertical morphism is the unit. 

A functor $\alpha: I \rightarrow J$ is mapped to the functor $(\twc{\alpha})^*$ which obviously preserves the (co)Cartesianity conditions.
Natural morphisms are treated in the same way as in the plain case because no lax morphisms are involved. 

We will now discuss the axioms:

(FDer0 right): It is clear from the definition that 
\[ \EE'(I) \rightarrow  \SSS^{\cor, G,\lax}(I) \]
is 2-opfibered and has 1-categorical fibers. It is also 1-fibered because we have
\begin{eqnarray*} 
& & \Hom_{\EE(I)}((\mathcal{E}_1, S_1), \dots, (\mathcal{E}_n, S_n), (\mathcal{F}, T))\\
& \cong & \Hom_{\DD(\twc{I})_{\pi_{23}^*T^{\op}}}((\pi_{23}^*f)^{\bullet } (\pi_{23}^*g)_{\bullet} (\mathcal{E}_1, \dots, \mathcal{E}_n), \mathcal{F}) \\
& \cong & \Hom_{\DD(\twc{I})_{\pi_{23}^*S_j^{\op}}}(\mathcal{E}_j, \Box_* (\pi_{23}^*g)^{\bullet,j} (\mathcal{E}_1, \overset{\widehat{j}}{\dots}, \mathcal{E}_n; (\pi_{23}^*f)_{?} \mathcal{F})).
\end{eqnarray*}
Here $\Box_*$ is the right coCartesian projection defined and discussed in Section~\ref{COCARTPROJ} and $(\pi_{23}^*f)_{?}$ is a right adjoint of $(\pi_{23}^*f)^\bullet$, which exists by the reasoning in the first part of the proof. (Note that $(\pi_{23}^*f)_{?}$ would be denoted $f^!$, i.e.\@ exceptional pull-back, in the usual language of six-functor-formalisms. Our notation, unfortunately, has reached its limit here.)
Therefore Cartesian morphisms exist w.r.t.\@ to any slot $j$ with pull-back functor explicitly given by 
\[ \Box_* (\pi_{23}^*g)^{\bullet,j} (-, \overset{\widehat{j}}{\dots}, -;  (\pi_{23}^*f)_{?} -). \] 

The second part of (FDer0 right) follows from the corresponding statement for $\DD$ and the fact that $\Box_*$ is ``point-wise the identity'' (cf.\@ Proposition~\ref{PROPCOCARTPROJ}). 
The axioms (FDer3--4 right) do not involve lax morphisms. 
(FDer5 right) follows because the corresponding axiom holds for $\DD$, because $(\pi_{23}^*f)_{?}$, as right adjoint, commutes with homotopy limits, and because $\Box_*$ is ``point-wise the identity'' (cf.\@ Proposition~\ref{PROPCOCARTPROJ}).  
\end{proof}

\section{Cocartesian projectors}\label{COCARTPROJ}

\begin{PAR}
We will show in this section that the fully-faithful inclusion
\[ \DD({}^{\downarrow\uparrow\downarrow} I)^{\pi_{13}-\cart,\pi_{12}-\cocart}_{\pi_{23}^* (S^{\op})}\ \rightarrow \DD({}^{\downarrow\uparrow\downarrow} I)^{\pi_{13}-\cart}_{\pi_{23}^* (S^{\op})}\]
(cf.\@ Definitions~\ref{DEFCOCART},~\ref{DEFE})
has a right adjoint $\Box_*$ which we will call a {\bf right coCartesian projector} (cf.\@ also \cite[Section~2.4]{Hor15}).

A right coCartesian projector (or rather its composition with the fully-faithful inclusion) can be specified by an endofunctor $\Box_*$ of $\DD({}^{\downarrow\uparrow\downarrow} I)^{\pi_{13}-\cart}_{\pi_{23}^*(S^{\op})}$ together with a natural transformation
\[ \nu: \Box_* \Rightarrow \id  \]
such that 
\begin{enumerate}
\item $\Box_* \mathcal{E}$ is $\pi_{12}$-coCartesian for all objects $\mathcal{E}$,
\item  $\nu_{\mathcal{E}}$ is an isomorphism on $\pi_{12}$-coCartesian objects $\mathcal{E}$,
\item $ \nu_{\Box_*\mathcal{E}} = \Box_* \nu_{\mathcal{E}}$ holds true. 
\end{enumerate}

This, in particular, gives a pullback functor
\[  \Box_* f^\bullet: \DD({}^{\downarrow\uparrow\downarrow} I)^{\pi_{13}-\cart,\pi_{12}-\cocart}_{\pi_{23}^* (S^{\op})}\ \rightarrow \DD({}^{\downarrow\uparrow\downarrow} I)^{\pi_{13}-\cart,\pi_{12}-\cocart}_{\pi_{23}^* (T^{\op})}\]
for {\em any} morphism (not necessarily type 2 admissible)
\[ f: S \rightarrow T\]
of admissible diagrams in $\mathcal{S}(\tw{I})$.

Note that, of course, $f^\bullet$ preserves automatically the condition of being $\pi_{13}$-Cartesian. Proposition~\ref{PROPCOCARTPROJ} below shows that this is still computed point-wise, i.e. that we have for any $\alpha: I \rightarrow J$
\[ \alpha^* \Box_* f^\bullet \cong  \Box_* (\alpha^*f)^\bullet.  \]
\end{PAR}

\begin{PAR}\label{COCARTPROJPREP}
We need some technical preparation. Consider the projections:
\[ \pi_{123}, \pi_{125}, \pi_{145} \pi_{345}: {}^{\downarrow\uparrow\downarrow\uparrow\downarrow}I \rightarrow {}^{\downarrow\uparrow\downarrow} I  \]
We have obvious natural transformations
\[ \pi_{123} \Rightarrow \pi_{125} \Leftarrow \pi_{145} \Rightarrow \pi_{345} \]
and therefore
\[ \pi_{123}^* \Rightarrow \pi_{125}^* \Leftarrow \pi_{145}^* \Rightarrow \pi_{345}^* \]
If we plug in $\pi_{23}^*(S^{\op})$ for an admissible diagram $S \in \SSS({}^{\downarrow\uparrow} I)$, we get morphisms of diagrams in $\SSS^{\op}$:
\[ \xymatrix{  \pi_{23}^*(S^{\op}) \ar[r]^-{g} & \pi_{25}^*(S^{\op}) \ar@{<-}[r]^-{f} & \pi_{45}^*(S^{\op}) \ar@{=}[r] &  \pi_{45}^*(S^{\op})  } \]
and therefore natural transformations
\begin{eqnarray*}
 g_\bullet \pi_{123}^* &\Rightarrow&  \pi_{125}^*  \\
 f^\bullet \pi_{125}^* &\Leftarrow&  \pi_{145}^*  
\end{eqnarray*}
of functors between fibers.
\end{PAR}

\begin{LEMMA}
$\pi_{123}$ and $\pi_{345}$ are opfibrations. 
\end{LEMMA}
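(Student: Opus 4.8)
The plan is to deduce both assertions from the general classification of the face projections of ${}^\Xi I$ recorded in \ref{PARTW2}, and then to make the relevant coCartesian lifts explicit. Write $\Xi = \downarrow\uparrow\downarrow\uparrow\downarrow$, so that objects of the source are chains $i_1\to i_2\to i_3\to i_4\to i_5$ and the target ${}^{\downarrow\uparrow\downarrow} I$ has objects $i_1\to i_2\to i_3$. In this language $\pi_{345}$ is the suffix projection onto $\Xi''' = \downarrow\uparrow\downarrow$ (removing the initial block at positions $1,2$), while $\pi_{123}$ is the prefix projection onto $\Xi' = \downarrow\uparrow\downarrow$ (removing the terminal block at positions $4,5$). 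By \ref{PARTW2} the behaviour of such a projection is governed solely by the direction of the arrow of $\Xi$ at the kept side of the cut; in both of our cases this yields the opfibration property, and the real work of the proof is to exhibit the coCartesian lifts concretely, since the constructions of $\EE$ in \ref{DEFE} and of $\Box_*$ rely on them. I would deliberately avoid any duality/op argument here: passing to ${}^\Xi(I^{\op})$ interchanges the roles of the two cuts and would produce Cartesian, not coCartesian, lifts.

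First I would treat $\pi_{345}$. Given $\sigma=(i_1\to\cdots\to i_5)$ and a morphism $\phi\colon (i_3\to i_4\to i_5)\to(j_3\to j_4\to j_5)$ in ${}^{\downarrow\uparrow\downarrow} I$ with components $c\colon i_3\to j_3$, $d\colon j_4\to i_4$, $e\colon i_5\to j_5$, I would build the lift $\tau$ by leaving the removed block fixed: set $j_1:=i_1$, $j_2:=i_2$ with identity verticals at positions $1,2$, and take the new boundary arrow $j_2\to j_3$ to be the composite $i_2\xrightarrow{s_2} i_3\xrightarrow{c} j_3$. Because the first kept arrow points $\downarrow$, the boundary comparison is produced by \emph{composition} and hence always exists; the two squares straddling the cut commute by construction, and the resulting $\sigma\to\tau$ is coCartesian. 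The projection $\pi_{123}$ is then handled by the same clause of \ref{PARTW2}: one extends the given morphism across positions $4,5$ by carrying along the tail $i_3\to i_4\to i_5$ of $\sigma$, reading off the comparison arrows at the cut from the component of $\phi$ at position $3$ together with the structure maps $s_3,s_4$ of $\sigma$, and again no passage to opposite diagrams is used.

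The existence of the lifts and the commutativity of the ladder diagrams is routine bookkeeping. I expect the only genuine point to be the verification of the coCartesian universal property at the cut, namely that every morphism of the source lying over a prescribed morphism of the base factors uniquely through the constructed lift. I would check this slot by slot, using that each projection is the identity on the kept positions and that, once the boundary comparison is fixed, the values over the removed block are determined; the uniqueness of the factorisation in the fibres is exactly the kind of statement made precise by the fibre descriptions in Lemma~\ref{LEMMAOPFIB}. Since all boundary arrows are obtained from existing morphisms of $I$, no (co)limits in $I$ are invoked, so the argument is valid for arbitrary $I\in\Dia$ as required by \ref{PARTW}.
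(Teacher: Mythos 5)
Your half of the argument concerning $\pi_{345}$ is correct and is exactly the content of the paper's (one-line) proof, which simply points to \ref{PARTW2}: for the suffix projection the governing datum is the first kept arrow $\Xi_3=\downarrow$, and your explicit lift (identity verticals on the removed block $i_1\to i_2$, new boundary arrow $c\circ s_2\colon i_2\to j_3$) is precisely the coCartesian lift that makes \ref{PARTW2} true. The gap is in your claim that $\pi_{123}$ is ``handled by the same clause.'' The criterion of \ref{PARTW2} is \emph{not} symmetric in the two cuts: for a prefix projection $\pi_{1,\dots,n'}$ it gives a \emph{fibration} when the last kept arrow is $\downarrow$ and an opfibration only when it is $\uparrow$, whereas for a suffix projection the roles of $\downarrow$ and $\uparrow$ are reversed. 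Since here the last kept arrow of $\Xi'=\downarrow\uparrow\downarrow$ is $\downarrow$, \ref{PARTW2} yields that $\pi_{123}$ is a \emph{fibration} (Cartesian lifts are built by composing $s_3\circ v_3$ at the cut, with identity verticals on the tail), not an opfibration. Your own parenthetical remark about duality already signals this: passing to $I^{\op}$ interchanges the two cuts \emph{and} interchanges fibrations with opfibrations, so the two projections cannot both fall under an ``opfibration'' clause.

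Concretely, your proposed coCartesian lift for $\pi_{123}$ does not typecheck. For a morphism $\sigma\to\tau$ in ${}^{\downarrow\uparrow\downarrow\uparrow\downarrow}I$ over $\phi$ with cut component $v_3\colon i_3\to j_3$, the square between positions $3$ and $4$ has verticals $v_3$ (pointing away from the row of $\sigma$) and $v_4\colon j_4\to i_4$ (pointing into it, since $\Xi_4=\uparrow$), so commutativity reads $s_3=v_4\circ t_3\circ v_3$: one must \emph{factor} $s_3$ through $v_3$, which ``carrying along the tail and reading off the comparison arrows from $v_3$, $s_3$, $s_4$'' cannot produce, and which may be outright impossible. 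For instance, with $I$ the span $b\xleftarrow{\,f\,}a\xrightarrow{\,g\,}c$, $\sigma=(a=a=a\xrightarrow{f}b=b)$ and $v_3=g$, there is no morphism over $\phi$ with source $\sigma$ at all, so $\pi_{123}$ is genuinely not an opfibration. (The printed statement of the Lemma is thus itself at odds with \ref{PARTW2}, which its proof cites; note that the sequel --- the lemma on $\pi_{345,!}\pi_{145}^*\Rightarrow\id$ and the proof of Proposition~\ref{PROPCOCARTPROJ} --- only ever invokes the opfibrancy of $\pi_{345}$.) Finally, your appeal to Lemma~\ref{LEMMAOPFIB} for the uniqueness step is misplaced: that lemma describes the fibers of the functors $q_1,q_2$ attached to an opfibration $\alpha\colon I\to J$, not of the face projections of ${}^{\Xi}I$; the fiber description relevant to $\pi_{345}$, namely ${}^{\downarrow\uparrow}(I\times_{/I}i)$, appears instead in the proof of Proposition~\ref{PROPCOCARTPROJ}.
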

\begin{proof}This was explained in \ref{PARTW2}.
\end{proof}

\begin{LEMMA}
The natural transformation
\[ \pi_{345,!}^{(\pi_{45}^*S)} \pi_{145}^* \Rightarrow \id \]
induced by the natural transformation
\[ \pi_{145}^* \Rightarrow \pi_{345}^* \]
of functors 
\[  \pi_{145}^* ,  \pi_{345}^*:  \DD({}^{\downarrow\uparrow\downarrow\uparrow\downarrow}I)_{\pi_{45}^*(S^{\op})} \rightarrow \DD(\twc{I})_{\pi_{23}^*(S^{\op})} \]
is an isomorphism.
\end{LEMMA}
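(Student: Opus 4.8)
The plan is to verify the asserted natural transformation is an isomorphism \emph{pointwise}. Both $\pi_{145}^*$ and $\pi_{345}^*$ send $\DD(\twc{I})_{\pi_{23}^*(S^{\op})}$ into $\DD({}^{\downarrow\uparrow\downarrow\uparrow\downarrow}I)_{\pi_{45}^*(S^{\op})}$ (since $\pi_{23}\circ\pi_{145}=\pi_{23}\circ\pi_{345}=\pi_{45}$), so that $\pi_{345,!}\pi_{145}^*$ is an endofunctor of $\DD(\twc{I})_{\pi_{23}^*(S^{\op})}$. By (Der2) the functor $\dia$ is conservative, hence it suffices to show that for every object $y=(j_1\xrightarrow{p}j_2\xrightarrow{q}j_3)$ of $\twc{I}$ the component $y^*\,\pi_{345,!}\,\pi_{145}^*\,\mathcal{E}\to\mathcal{E}_y$ is an isomorphism for all $\mathcal{E}$. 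Since $\pi_{345}$ is an opfibration (the preceding Lemma), I would compute the left Kan extension pointwise via (FDer4 left'): writing $F_y$ for the strict fibre of $\pi_{345}$ over $y$ and $p_{F_y}$ for its projection to the point, one has $y^*\pi_{345,!}(-)\cong p_{F_y,!}\big((-)|_{F_y}\big)$. Inspecting ${}^{\downarrow\uparrow\downarrow\uparrow\downarrow}I$ identifies $F_y$ with the twisted arrow category $\tw{(I\times_{/I}j_1)}$ of the slice over $j_1$ (objects are composable pairs $i_1\to i_2\to j_1$, with the $\downarrow\uparrow$-variance on the first two spots). Crucially $\pi_{45}$ is constant on $F_y$ with value $j_2\xrightarrow{q}j_3$, so $\pi_{45}^*(S^{\op})|_{F_y}$ is the constant diagram at $c:=S^{\op}(j_2\to j_3)$, and the entire computation then takes place in the ordinary derivator $\DD_{\cdot,c}$.

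Next I would analyse the restriction $\pi_{145}|_{F_y}$. It factors as $\pi_{145}|_{F_y}=w\circ v$, where $v:\tw{(I\times_{/I}j_1)}\to I\times_{/I}j_1$ is the domain projection $(i_1\xrightarrow{a}i_2\xrightarrow{b}j_1)\mapsto(i_1\xrightarrow{ba}j_1)$ and $w:I\times_{/I}j_1\to\twc{I}$ sends $(i\xrightarrow{c}j_1)$ to $(i\xrightarrow{pc}j_2\xrightarrow{q}j_3)$. The slice $I\times_{/I}j_1$ has terminal object $\id_{j_1}$ and $w(\id_{j_1})=y$, so by Lemma~\ref{LEMMAX} and Corollary~\ref{LEMMAY} one has $\colim_{I\times_{/I}j_1}w^*\mathcal{E}=(w^*\mathcal{E})_{\id_{j_1}}=\mathcal{E}_y$. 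Hence everything reduces to showing that $v$ is \emph{homotopy final}, i.e.\ that the canonical map $\colim_{\tw{(I\times_{/I}j_1)}}v^*(-)\to\colim_{I\times_{/I}j_1}(-)$ is an isomorphism.

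I would establish finality of $v$ by exhibiting it as a right adjoint. Writing $t=\id_{j_1}$ for the terminal object of $C:=I\times_{/I}j_1$, define $\ell:C\to\tw{C}$ by $\ell(x)=(x\xrightarrow{!_x}t)$. Then $v\circ\ell=\id_C$, and the $\tw{C}$-morphisms $\ell v(f)=(x_0\xrightarrow{!}t)\to(x_0\xrightarrow{f}x_1)=f$ with components $(\id_{x_0},!_{x_1})$ assemble into a counit $\ell v\Rightarrow\id_{\tw{C}}$; the triangle identities are immediate, giving $\ell\dashv v$. A right adjoint functor of diagrams is homotopy final, so $v^*$ is compatible with homotopy colimits as required. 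Chaining the three steps yields $p_{F_y,!}\big((\pi_{145}^*\mathcal{E})|_{F_y}\big)\cong\mathcal{E}_y$, and I would check that under these identifications the comparison map is exactly the one induced by $\pi_{145}\Rightarrow\pi_{345}$: its restriction to $F_y$ is the cocone whose component at $x$ is $(ba\colon i_1\to j_1)$ on the first spot, which is precisely the terminal cocone of $C$ transported along the final functor $v$.

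The main obstacle is the finality of the domain projection $v$. For a \emph{general} base category the domain projection $\tw{C}\to C$ need not be final (it fails already for a one-object groupoid), and what rescues the situation is exactly that here the $\pi_{345}$-fibre is the twisted arrow category of a \emph{slice}, which therefore has a terminal object; the adjunction $\ell\dashv v$ is the clean way to exploit this. Everything else—keeping the base constant so that the argument lives in the ordinary derivator $\DD_{\cdot,c}$, and matching the produced isomorphism with the stated natural transformation via (Der2) and the pointwise nature of $\pi_{345,!}$—is routine bookkeeping.
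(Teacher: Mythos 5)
Your proof is correct, and its skeleton is the same as the paper's: reduce to components via (Der2), apply the pointwise formula (FDer4 left') over the opfibration $\pi_{345}$, identify the fiber with $\tw{(I \times_{/I} j_1)}$, observe that the base is constant there so that the whole computation happens in the ordinary derivator $\DD_{\cdot,c}$, and finish by evaluating at the terminal object $\id_{j_1}$ of the slice. Where you genuinely differ is the key middle step, and your version is the sounder one. The paper handles the colimit over $\tw{C}$, $C = I \times_{/I} j_1$, by asserting that the projection $\pi_1 = v\colon \tw{C} \to C$ is an \emph{opfibration} whose fibers are coslices with initial objects, and then computing $\pi_{1,!}$ fiberwise via Corollary~\ref{LEMMAY}. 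But by the paper's own \ref{PARTW2} (the last arrow of $\Xi' = (\downarrow)$ is $\downarrow$), $\pi_1$ is a Grothendieck \emph{fibration}, not an opfibration, and its fibers are the \emph{opposites} of coslices, with terminal rather than initial objects; since left Kan extensions along fibrations are not computed fiberwise in general, the paper's argument does not go through as literally written. Your substitute --- the explicit adjunction $\ell \dashv v$ with $\ell(x) = (x \to t)$, whence $v_* \cong \ell^*$ by uniqueness of right adjoints and therefore $p_{\tw{C},!}\,v^* \cong p_{C,!}$ --- is elementary, valid inside the fiber derivator, and exactly fills this gap. So your proof in effect repairs a slip in the paper's, while buying the same conclusion.

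One side remark in your proposal is false, though it costs you nothing: the domain projection $\tw{C} \to C$ is in fact \emph{always} homotopy final. Indeed $c \backslash v \cong \tw{(c \backslash C)}$, and the twisted arrow category has the same weak homotopy type as the category itself (edgewise subdivision), so these comma categories are weakly contractible for every $C$; for a one-object groupoid $v$ is even an equivalence of categories. What the terminal object of the slice buys you is therefore not finality itself but an \emph{elementary} proof of it, via the adjunction --- which is all one can use in the axiomatic derivator setting without appealing to Cisinski's finality theorem. Since your argument never invokes the false general claim, the proof stands as written.
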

\begin{proof}
Since $\pi_{345}$ is an opfibration, we have for any object $\alpha = \{i \rightarrow j \rightarrow k\}$ in $\twc{I}$: 
\[ \alpha^* \pi_{345,!} \pi_{145}^* = p_! \pi_{145}^*  \]
where $p: {}^{\downarrow\uparrow} (I \times_{/I} i) \rightarrow \{\cdot\}$. We can factor $p$ in the following way:
\[ \xymatrix{ {}^{\downarrow\uparrow} (I \times_{/I} i) \ar[r]^-{\pi_1} & I \times_{/I} i \ar[r]^P \ar[r] & \{\cdot\} }\]
The functor $\pi_1$ is an opfibration with fibers of the form $\beta \times_{/(I \times_{/I} i)}(I \times_{/I} i)$. 
Since these fibers have an initial object, and the objects in the image of $\pi_{145}^*$ are constant along it, the homotopy colimit 
over objects in the image of $\pi_{145}^*$ along it are equal to this constant value by Corollary~\ref{LEMMAY}. Furthermore, the homotopy colimit over $I \times_{/I} i$ is
the same as evaluation at $\id_i$ because $\id_i$ is the final object. 
\end{proof}

If $\mathcal{E}$ is an object in $\DD({}^{\downarrow\uparrow\downarrow} I)^{\pi_{13}-\cart}_{\pi_{23}^*(S^{\op})}$ we have that the morphism
\[  f^\bullet \pi_{125}^* \mathcal{E} \leftarrow  \pi_{145}^* \mathcal{E}  \]
is an isomorphism.

\begin{PROP}\label{PROPCOCARTPROJ}Using the notation of \ref{COCARTPROJPREP}, 
denote $\Box_* :=  \pi_{345;!} f^\bullet g_\bullet \pi_{123}^*$. This functor, together
with the composition
\[ \xymatrix{  \mathcal{E} & \ar[l]_-\sim  \pi_{345;!} \pi_{145}^* \mathcal{E} \ar[r]^-\sim &  \pi_{345;!} f^\bullet \pi_{125}^* \mathcal{E}  & \ar[l]  \pi_{345;!} f^\bullet g_\bullet \pi_{123}^* \mathcal{E}  =  \Box_* \mathcal{E} \ar@/^20pt/[lll]^{\nu_{\mathcal{E}}}   },  \]
defines a right coCartesian projector:
\[\Box_* :  \DD({}^{\downarrow\uparrow\downarrow} I)^{\pi_{13}-\cart}_{\pi_{23}^* S} \rightarrow \DD({}^{\downarrow\uparrow\downarrow} I)^{\pi_{13}-\cart,\pi_{12}-\cocart}_{\pi_{23}^* S}. \]

This projector has the following property: 
\begin{itemize}
\item For each $i \in I$ the natural transformation
\[ ({}^{\downarrow \uparrow \downarrow}i)^* \Box_*  \rightarrow  ({}^{\downarrow \uparrow \downarrow}i)^*   \]
is an isomorphism. (Here $i$ denotes, by abuse of notation, the subcategory of $I$ consisting of $i$ and $\id_i$. Hence ${}^{\downarrow \uparrow \downarrow}i$ is the subcategory of
${}^{\downarrow \uparrow \downarrow}I$ consisting of $i=i=i$ and its identity.)
\end{itemize}
\end{PROP}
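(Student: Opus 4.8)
The plan is to verify directly the three properties (1)--(3) characterising a right coCartesian projector, as listed in the paragraph preceding the statement, together with the point-wise property, and then to note that the same analysis shows $\Box_*\mathcal{E}$ is $\pi_{13}$-Cartesian so that $\Box_*$ lands in the asserted subcategory. Throughout I would use that $\pi_{123}$ and $\pi_{345}$ are opfibrations and that two of the three maps making up $\nu_\mathcal{E}$ are already isomorphisms: by the Lemma immediately preceding the Proposition, $\pi_{345,!}\pi_{145}^*\mathcal{E}\to\mathcal{E}$ is an isomorphism, and by the remark following it, $\pi_{145}^*\mathcal{E}\to f^\bullet\pi_{125}^*\mathcal{E}$ is an isomorphism for $\pi_{13}$-Cartesian $\mathcal{E}$. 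Writing $c_\mathcal{E}\colon g_\bullet\pi_{123}^*\mathcal{E}\to\pi_{125}^*\mathcal{E}$ for the canonical comparison coming from $\pi_{123}\Rightarrow\pi_{125}$, this reduces the whole question of when $\nu_\mathcal{E}$ is an isomorphism to the single map $\pi_{345,!}f^\bullet(c_\mathcal{E})$.

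First I would dispatch property (2). The comparison $c_\mathcal{E}$ is assembled from the transitions of $\pi_{123}^*\mathcal{E}$ along the morphisms realising $\pi_{123}\Rightarrow\pi_{125}$, i.e.\ along morphisms of $\twc{I}$ that change only the last coordinate; these are coCartesian precisely when $\mathcal{E}$ is $\pi_{12}$-coCartesian, in which case $g_\bullet$ turns them into isomorphisms and $c_\mathcal{E}$ is itself an isomorphism. By the reduction above $\nu_\mathcal{E}$ is then a composite of isomorphisms, giving (2). Property (3), namely $\nu_{\Box_*\mathcal{E}}=\Box_*\nu_\mathcal{E}$, I would then obtain formally: once (1) is known, $\Box_*\mathcal{E}$ is $\pi_{12}$-coCartesian, so $\nu_{\Box_*\mathcal{E}}$ is an isomorphism by (2), and its identification with $\Box_*\nu_\mathcal{E}$ follows from naturality of the comparison maps together with the explicit triangle defining $\nu$.

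The technical heart is property (1): that $\Box_*\mathcal{E}=\pi_{345,!}f^\bullet g_\bullet\pi_{123}^*\mathcal{E}$ is $\pi_{12}$-coCartesian. Here I would argue exactly as in Lemmas~\ref{LEMMA2} and~\ref{LEMMA4}. Fix a morphism $\kappa$ of $\twc{I}$ sent to an identity by $\pi_{12}$, that is, one varying only the last coordinate. Since $\pi_{345}$ is an opfibration, the transition of $\Box_*\mathcal{E}$ along $\kappa$ is a map of homotopy colimits over the fibres of $\pi_{345}$; the key simplification is that these fibres are of the form ${}^{\downarrow\uparrow}(I\times_{/I}i_3)$ and do not vary along $\kappa$, so the transition is induced by the point-wise transition of $f^\bullet g_\bullet\pi_{123}^*\mathcal{E}$ over one fixed fibre. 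Invoking assumption~1 of Theorem~\ref{MAINTHEOREM1}, that pushforward along $1$-ary morphisms commutes with homotopy colimits, I would pull the relevant pushforward out of the colimit and so reduce coCartesianity to the point-wise statement, which holds by the coCartesian-preservation properties of $g_\bullet$ and $f^\bullet$ (Lemmas~\ref{LEMMACOCARTPRES} and~\ref{LEMMACARTPRES}) together with the fact that $\pi_{123}^*\mathcal{E}$ is constant in the last coordinate. The same fibre computation shows $\pi_{345,!}$ preserves $\pi_{13}$-Cartesianity. I expect this fibrewise base-change bookkeeping to be the main obstacle.

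Finally, for the point-wise property I would evaluate at the diagonal object $\delta_i=(i=i=i)$ of $\twc{I}$. By the reduction it suffices to show that $\delta_i^*\pi_{345,!}f^\bullet(c_\mathcal{E})$ is an isomorphism. As $\pi_{345}$ is an opfibration, $\delta_i^*\pi_{345,!}$ is computed by $p_!$ for the projection $p\colon F\to\{\cdot\}$ of the fibre $F={}^{\downarrow\uparrow}(I\times_{/I}i)$. Factoring $p$ through $I\times_{/I}i$ exactly as in the proof of the Lemma preceding the Proposition, and using that $\id_i$ is final in $I\times_{/I}i$ while the fibres of $F\to I\times_{/I}i$ have initial objects (Lemma~\ref{LEMMAX} and Corollary~\ref{LEMMAY}), the colimit localises to the value of the integrand at $\id_i$. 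At $\id_i$ the morphisms $f$ and $g$ are identities, so $c_\mathcal{E}$ restricts there to an identity and $\delta_i^*\nu_\mathcal{E}$ becomes an isomorphism, as claimed.
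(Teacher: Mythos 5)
Your handling of properties (1) and (2) and of the point-wise statement follows the paper's own route: reduce $\nu_{\mathcal{E}}$ to the single comparison $\pi_{345,!}f^\bullet(c_{\mathcal{E}})$ with $c_{\mathcal{E}}\colon g_\bullet\pi_{123}^*\mathcal{E}\to\pi_{125}^*\mathcal{E}$, and compute $\pi_{345,!}$ fibrewise over the opfibration $\pi_{345}$ using (multi-)base-change and commutation of homotopy colimits with the transition functors. Two small corrections there: in the $\pi_{12}$-coCartesian case what you pull out of the colimit is a \emph{pushforward} $(-)_\bullet$, whose commutation with homotopy colimits is automatic for a left fibered multiderivator; assumption 1 of Theorem~\ref{MAINTHEOREM1} (commutation of the \emph{pullbacks} $(-)^\bullet$ with homotopy colimits) is what the $\pi_{13}$-Cartesian case requires. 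Also, in the point-wise argument the constancy of the integrand along the fibres of ${}^{\downarrow\uparrow}(I\times_{/I}i)\to I\times_{/I}i$ is not free: it uses that $\mathcal{E}$ is $\pi_{13}$-Cartesian.

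The genuine gap is property (3), $\nu_{\Box_*\mathcal{E}}=\Box_*\nu_{\mathcal{E}}$, which you dismiss as following ``from naturality of the comparison maps''. It does not. Naturality of $\nu$, applied to the morphism $\nu_{\mathcal{E}}\colon\Box_*\mathcal{E}\to\mathcal{E}$, only yields $\nu_{\mathcal{E}}\circ\Box_*\nu_{\mathcal{E}}=\nu_{\mathcal{E}}\circ\nu_{\Box_*\mathcal{E}}$, and you cannot cancel $\nu_{\mathcal{E}}$: it is invertible only for $\pi_{12}$-coCartesian $\mathcal{E}$, whereas (3) has its content exactly for general $\mathcal{E}$. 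In fact (1) and (2) together with naturality only show that $u:=\nu_{\Box_*\mathcal{E}}^{-1}\circ\Box_*\nu_{\mathcal{E}}$ is an \emph{idempotent} endomorphism of $\Box_*\Box_*\mathcal{E}$; (3) asserts $u=\id$, and this can fail for copointed endofunctors satisfying (1) and (2). A minimal example: take objects $B,X$ with $\Hom(B,B)=\{\id,e\}$, $e^2=e$, $\Hom(B,X)=\{m\}$, $m\circ e=m$, $\Hom(X,B)=\emptyset$, $\Hom(X,X)=\{\id\}$, and set $TB=TX=B$, $Te=e$, $Tm=e$, $\nu_B=\id$, $\nu_X=m$; then (1) and (2) hold, but $T\nu_X=e\neq\id=\nu_{TX}$, and $T$ is not a coreflection since $\Hom(B,TX)\to\Hom(B,X)$ is not injective. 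This is precisely the point: (1)+(2) give only the surjectivity half of the adjunction bijection $\Hom(\mathcal{F},\Box_*\mathcal{E})\to\Hom(\mathcal{F},\mathcal{E})$ for $\pi_{12}$-coCartesian $\mathcal{F}$, while (3) supplies injectivity. Accordingly, the proof of (3) is the technical heart of the paper's argument: one passes to the sevenfold category ${}^{\downarrow\uparrow\downarrow\uparrow\downarrow\uparrow\downarrow}I$, constructs by base-change an explicit isomorphism $\Theta\colon\pi_{567,!}f_3^\bullet g_{2,\bullet}\pi_{123}^*\iso(\Box_*)^2$, and then verifies, via two large commutative diagrams, that both $\Box_*\ast\nu$ and $\nu\ast\Box_*$ correspond under $\Theta$ to one and the same morphism $\pi_{567,!}f_3^\bullet g_{2,\bullet}\pi_{123}^*\Rightarrow\Box_*$. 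None of this appears in your proposal, so it is incomplete at its hardest point.
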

\begin{proof}
If suffices to show that 
\begin{enumerate}
\item $\Box_* \mathcal{E}$ is $\pi_{12}$-coCartesian for all objects $\mathcal{E}$,
\item  $\nu_{\mathcal{E}}$ is an isomorphism on $\pi_{12}$-coCartesian objects $\mathcal{E}$,
\item the equation $ \Box_* \nu_{\mathcal{E}} = \nu_{\Box_* \mathcal{E}}$ holds true. 
\end{enumerate}
1.\@ Since $\pi_{345}$ is an opfibration, the evaluation of $\Box_* \mathcal{E}$ at an object $i \rightarrow j \rightarrow k$ of $\twc{I}$ is equal to 
\[ p_{{}^{\downarrow\uparrow} (I \times_{/I} i), !} \iota_{i,j,k}^* f^\bullet g_\bullet \pi_{123}^* \mathcal{E}  \]
where $\iota_{i,j,k}: {}^{\downarrow\uparrow} (I \times_{/I} i) \hookrightarrow {}^{\downarrow\uparrow\downarrow\uparrow\downarrow}I$ is the inclusion
\[ (l \rightarrow m \rightarrow i) \mapsto (l \rightarrow m \rightarrow i \rightarrow j \rightarrow k). \]
For any morphism $\mu$ in ${}^{\downarrow\uparrow\downarrow}I$ such that $\pi_{12}$ (resp.\@ $\pi_{13}$) maps it to an an identity we have to see that the morphism
\[ \mu^* (\Box_* \mathcal{E}) \]
in $\DD(\cdot)$ is coCartesian (resp.\@ Cartesian). In the first case, such a morphism $\mu$ is of the form
\[ \xymatrix{
i \ar[r] \ar@{=}[d] & j \ar[r] \ar@{=}[d] & k \ar[d] \\
i \ar[r] &i \ar[r] & k'
} \]
and since homotopy colimits commute with push-forward it suffices to show that all morphisms
\[ (S^{\op}(\pr_{23}(\mu)))_\bullet \iota_{i,j,k'}^* f^\bullet g_\bullet \pi_{123}^* \mathcal{E} \rightarrow \iota_{i,j,k}^* f^\bullet g_\bullet \pi_{123}^* \mathcal{E} \]
are isomorphisms. This follows immediately from the base-change formula \ref{DEFMULTIBASECHANGE}. 

In the second case, such a morphism $\mu$ is of the form
\[ \xymatrix{
i \ar[r] \ar@{=}[d] & j' \ar[r] \ar@{<-}[d] & k \ar@{=}[d] \\
i \ar[r] & j \ar[r] & k
} \]
and since by assumption homotopy colimits commute with pull-backs as well, it suffices to show that all morphisms
\[ (S^{\op}(\pr_{23}(\mu)))^\bullet \iota_{i,j',k}^* f^\bullet g_\bullet \pi_{123}^* \mathcal{E} \rightarrow \iota_{i,j,k}^* f^\bullet g_\bullet \pi_{123}^* \mathcal{E} \]
are isomorphisms which is obvious. 

Assertion 2.\@ follows from the fact that for a $\pi_{13}$-coCartesian and $\pi_{12}$-Cartesian diagram $\mathcal{E}$ the diagram
\[  \iota_{i,j,k}^* f^\bullet g_\bullet \pi_{123}^* \mathcal{E}  \]
is (co)Cartesian over the constant diagram $S(j \rightarrow k)$. Therefore (as in the proof of Lemma~\ref{LEMMA2}) its homotopy colimit is the same as evaluation 
 at $(i = i) \in {}^{\downarrow\uparrow} (I \times_{/I} i)$ which is not affected by $f^\bullet g_\bullet$ and $(i = i)$ is mapped by $\pi_{123} \circ \iota_{i,j,k}$ to $i \rightarrow j \rightarrow k$.

We give a sketch of proof of assertion 3.\@ for which we need a bit of preparation.
Note that the following diagrams are Cartesian:
\[ \xymatrix{
 {}^{\downarrow\uparrow\downarrow\uparrow\downarrow\uparrow\downarrow} I  \ar[rr]^{\pi_{12345}} \ar[d]_{\pi_{34567}} &&  {}^{\downarrow\uparrow\downarrow\uparrow\downarrow} I \ar[d]^{\pi_{345}} \\
 {}^{\downarrow\uparrow\downarrow\uparrow\downarrow} I  \ar[rr]^{\pi_{123}} &&  {}^{\downarrow\uparrow\downarrow} I 
}
\qquad
\xymatrix{
 {}^{\downarrow\uparrow\downarrow\uparrow\downarrow\uparrow\downarrow} I  \ar[rr]^{\pi_{12347}} \ar[d]_{\pi_{34567}} &&  {}^{\downarrow\uparrow\downarrow\uparrow\downarrow} I \ar[d]^{\pi_{345}} \\
 {}^{\downarrow\uparrow\downarrow\uparrow\downarrow} I  \ar[rr]^{\pi_{125}} &&  {}^{\downarrow\uparrow\downarrow} I 
} 
\qquad
\xymatrix{
 {}^{\downarrow\uparrow\downarrow\uparrow\downarrow\uparrow\downarrow} I  \ar[rr]^{\pi_{12367}} \ar[d]_{\pi_{34567}} &&  {}^{\downarrow\uparrow\downarrow\uparrow\downarrow} I \ar[d]^{\pi_{345}} \\
 {}^{\downarrow\uparrow\downarrow\uparrow\downarrow} I  \ar[rr]^{\pi_{145}} &&  {}^{\downarrow\uparrow\downarrow} I 
} \]
Consider the following commutative diagram in $\SSS({}^{\downarrow\uparrow\downarrow\uparrow\downarrow\uparrow\downarrow} I)$ in which the square is Cartesian:
\[ \xymatrix{
 &  & \pi_{67}^*(S^{\op}) \ar[d]^{f_2}  \ar@/^30pt/[dd]^{f_3}\\
 & \pi_{45}^*(S^{\op}) \ar[r]^{g_4} \ar[d]^{f_5} & \pi_{47}^*(S^{\op}) \ar[d]^{f_1} \\
 \pi_{23}^*(S^{\op})  \ar@/_30pt/[rr]_{g_2} \ar[r]_{g_3} & \pi_{25}^*(S^{\op}) \ar[r]_{g_5} &  \pi_{27}^*(S^{\op})
} \]
We  have an isomorphism
\[ \xymatrix{ (\Box_*)^2 = \pi_{345,!} f^\bullet g_\bullet \pi_{123}^* \ \pi_{345,!} f^\bullet g_\bullet \pi_{123}^* & \ar[l]_-{\sim}^-\Theta \pi_{567,!} f_3^\bullet g_{2,\bullet} \pi_{123}^* } \]
given as the following composition
\begin{eqnarray*}
\pi_{345,!} f^\bullet g_\bullet \pi_{123}^*\ \pi_{345,!} f^\bullet g_\bullet \pi_{123}^* & \overset{\sim}{\longleftarrow}  & \pi_{345,!} f^\bullet g_\bullet \pi_{34567,!} \pi_{12345}^* f^\bullet g_\bullet \pi_{123}^* \\
& \overset{\sim}{\longleftarrow} & \pi_{345,!}  \pi_{34567,!} f^{\bullet}_2 g_{4,\bullet}   f_5^\bullet g_{3,\bullet}  \pi_{12345}^* \pi_{123}^* \\
& \overset{\sim}{\longleftarrow}  & \pi_{567,!}  f_3^\bullet g_{2,\bullet}   \pi_{123}^*
\end{eqnarray*}
involving that $()_\bullet$ and $()^\bullet$ commute with both $\alpha^*$ and $\alpha_!$ for any functor $\alpha$ in $\Dia$, and the base-change formula for $f_1, f_5, g_4, g_5$.
For the first isomorphism note that $\pi_{345}$ is an opfibration. 

Similarly, we construct the other horizontal isomorphisms in the following diagram
\begin{equation} \label{hugedia1} \vcenter{ \xymatrix{
\pi_{345,!} f^\bullet g_\bullet \pi_{123}^* \ar@{=}[r] \ar@{<-}[d]^{\sim} & \pi_{345,!} f^\bullet g_\bullet \pi_{123}^* \ar@{<-}[d]^{\numcirc{1}} \\
\pi_{345;!} \pi_{145}^*\   \pi_{345,!} f^\bullet g_\bullet \pi_{123}^* \ar@{<-}[r]^-\sim \ar[d]^{\sim} & \pi_{567,!} f^\bullet_3 g_{2,\bullet} \pi_{123}^* \ar@{=}[d] \\
\pi_{345;!} f^\bullet \pi_{125}^* \  \pi_{345,!} f^\bullet g_\bullet \pi_{123}^* \ar@{<-}[r]^-\sim \ar@{<-}[d]^{\sim} & \pi_{567,!} f_3^\bullet g_{2,\bullet} \pi_{123}^* \ar@{=}[d] \\
\pi_{345,!} f^\bullet g_\bullet \pi_{123}^* \ \pi_{345,!} f^\bullet g_\bullet \pi_{123}^* \ar@{<-}[r]^-\sim_-\Theta \ar@/^100pt/[uuu]^{\nu \ast \Box_!} & \pi_{567,!} f_3^\bullet g_{2,\bullet} \pi_{123}^* 
} }
\end{equation}
The morphism $\numcirc{1}$ is the following composition, in which the first morphism is induced by the counit of the pair of adjoint functors $\pi_{12567,!}$, $\pi_{12567}^*$:
\begin{eqnarray*}
\pi_{345,!} f^\bullet g_\bullet \pi_{123}^* & \longleftarrow  & \pi_{345,!} f^\bullet g_\bullet \pi_{12567,!} \pi_{12567}^*  \pi_{123}^* \\
& \overset{\sim}{\longleftarrow} &  \pi_{567,!} \pi_{12567,!} f_3^{\bullet} g_{5,\bullet} \pi_{125}^* \\
& \longleftarrow  & \pi_{567,!}  f_3^\bullet g_{5,\bullet} g_{3,\bullet}   \pi_{123}^* \\
& \overset{\sim}{\longleftarrow}  & \pi_{567,!}  f_3^\bullet g_{2,\bullet}  \pi_{123}^*.
\end{eqnarray*}
One checks that the diagram (\ref{hugedia1}) commutes. 

There is an analogous commutative diagram
\begin{equation} \label{hugedia2} \vcenter{ \xymatrix{
\pi_{345,!} f^\bullet g_\bullet \pi_{123}^* \ar@{=}[r] \ar@{<-}[d]^{\sim} & \pi_{345,!} f^\bullet g_\bullet \pi_{123}^* \ar@{<-}[d]^{}_{\numcirc{2}} \\
  \pi_{345,!} f^\bullet g_\bullet \pi_{123}^* \ \pi_{345;!} \pi_{145}^* \ar@{<-}[r]^-\sim \ar[d]^{\sim} & \pi_{567,!} f^\bullet_2 g_{4,\bullet} \pi_{145}^* \ar[d]^\sim_{\numcirc{3}} \\
\pi_{345,!} f^\bullet g_\bullet \pi_{123}^* \ \pi_{345;!} f^\bullet \pi_{125}^*  \ar@{<-}[r]^-\sim \ar@{<-}[d]^{\sim} & \pi_{567,!} f_3^\bullet g_{5,\bullet} \pi_{125}^* \ar@{<-}[d]^\sim_{\numcirc{4}} \\
\pi_{345,!} f^\bullet g_\bullet \pi_{123}^* \ \pi_{345,!} f^\bullet g_\bullet \pi_{123}^* \ar@{<-}[r]^-\sim_-\Theta \ar@/^100pt/[uuu]^{\Box_! \ast \nu} & \pi_{567,!} f_3^\bullet g_{2,\bullet} \pi_{123}^* 
} }
\end{equation}
in which the morphism $\numcirc{2}$ is constructed similarly using the counit of the pair of adjoint functors $\pi_{14567,!}$, $\pi_{14567}^*$, and 
$\numcirc{3}$ and $\numcirc{4}$ are constructed as in \ref{COCARTPROJPREP}.

Furthermore, there is a morphism $\numcirc{5}$ constructed similarly using the counit of the pair of adjoint functors $\pi_{12567,!}$, $\pi_{12567}^*$ again, making the diagram 
\begin{equation} \label{hugedia2} \vcenter{ \xymatrix{
& \pi_{567,!}f_3^\bullet g_{5,\bullet} \pi_{125}^* \ar[dd]^{\numcirc{5}}  \\
\pi_{567,!}f_3^\bullet g_{2,\bullet} \pi_{123}^* \ar[ru]^{\numcirc{4}} \ar[rd]_{\numcirc{1}} & &  \pi_{567,!}f_2^\bullet g_{4,\bullet} \pi_{145}^* \ar[lu]_{\numcirc{3}} \ar[ld]^{\numcirc{2}}\\
& \pi_{345,!}f^\bullet g_{\bullet} \pi_{123}^*
} }
\end{equation}
commutative. This proves assertion 3.

For the additional property given in the statement of the Proposition observe that 
$\pi_{345,!}\mathcal{E}$ at an arrow $i \rightarrow i \rightarrow i$ is the homotopy colimit over the diagram $\iota^* \mathcal{E}$ for $\iota: {}^{\downarrow\uparrow} (I \times_{/I} i) \hookrightarrow {}^{\downarrow\uparrow\downarrow\uparrow\downarrow}I$ pulled back to $S(i = i)$. The projection $\pr_1: {}^{\downarrow\uparrow} (I \times_{/I} i) \rightarrow  (I \times_{/I} i) $ is an opfibration with fibers of the form $\beta \times_{/(I \times_{/I} i)} (I \times_{/I} i)$. These categories have an initial object and the restriction of the diagram $\pi_{123}^* \mathcal{E}$  is constant on it, because of the assumption that $\mathcal{E}$ is $\pi_{13}$-Cartesian already. Hence the homotopy colimit over the restriction of $\pi_{123}^* \mathcal{E}$ to these fibers is the corresponding constant value by Lemma~\ref{LEMMAX}.
The colimit over $(I \times_{/I} i)$, furthermore, is evaluation at $\id_i$ because it is a final object. In total, the natural morphism 
\[ ({}^{\downarrow \uparrow \downarrow}i)^* \Box_* \mathcal{E} \rightarrow  ({}^{\downarrow \uparrow \downarrow}i)^* \mathcal{E}  \]
is an isomophism. 
\end{proof}

\section{The (co)localization property and $n$-angels in the fibers of a stable proper or etale derivator six-functor-formalism}\label{SECTIONLOC}

\begin{PAR}
Let $\mathcal{S}$ be a category and $\mathcal{S}_0$ a class of ``proper'' morphisms. 
Let 
\[ \DD' \rightarrow \SSS^{\cor,0,\oplax} \quad \text{resp.}\quad  \DD'' \rightarrow \SSS^{\cor,0,\lax} \]
be a proper derivator six-functor-formalism (cf.\@ Definition~\ref{DEF6FUDER}) with stable fibers (cf.\@ Definition~\ref{DEFSTABLE}). The multi-aspect will not play any role in this section. The reasoning in this section has an ``etale'' analogue that we leave to the reader to state. 
\end{PAR}

\begin{PAR}\label{PAROPENCLOSED}
If $\mathcal{S}$ is a category of some kind of spaces, we are often given a class of elementary squares as follows.
Assume that in $\mathcal{S}$ there are certain distinguished morphisms called ``closed immersions'' or ``open immersions'' respectively, with an
operation of taking complements. 
For a morphism $f: X \rightarrow Y$ in $\mathcal{S}$ we denote by $f$, resp.\@ $f^{\op}$ the correspondences
\[ f: \vcenter{ \xymatrix{
  & S \ar[rd]^{f} \ar@{=}[ld] \\
 S & & T
} } \qquad 
f^{\op}: \vcenter{\xymatrix{
  & S \ar[ld]_{f} \ar@{=}[rd] \\
 T & & S
}}  \]
in $\mathcal{S}^{\cor}$. 
Let 
\[ \xymatrix{ U \ar@{^{(}->}[r]^i & V \ar@{^{(}->}[r]^j & X  } \]
be a sequence of ``open embeddings''. And let 
$\overline{i}: V \setminus U \hookrightarrow V$, resp.\@ $\overline{j \circ  i}: X \setminus U \hookrightarrow X$ be ``closed embeddings of the complements''. 
For now these morphisms can be arbitrary, but to make sense of these definitions in applications they should satisfy the properties of \ref{PARCOMPLEMENT} below. 

We then have the following square in $\Xi_{U,V,X} \in \SSS^{\cor}(\Box)$: 
\[ \xymatrix{
V \ar[r]^{j} \ar[d]^{\overline{i}^{\op}} & X \ar[d]^{\overline{j \circ  i}^{\op}} \\
V \setminus U \ar[r]^j & X \setminus U
} \]
Assume that the ``closed embeddings'' lie in the class $\mathcal{S}_0$ which was fixed to define the notion of proper derivator six-functor-formalism. 
Then the above square comes equipped with a morphism $\xi: \Xi_{U,V,X} \rightarrow p^*X$ in $\SSS^{\cor, 0, \oplax}(\Box)$ represented by the cube (as a morphism
from the front face to the back face):
\[ \xymatrix{
& X \ar@{=}[rr] \ar@{=}[dd] & &X  \ar@{=}[dd] \\
V \ar[ru]^{j} \ar[dd]_-{\overline{i}^{\op}} \ar[rr]^-(.3){j} && X \ar[dd]^-(.3){\overline{j \circ  i}^{\op}} \ar@{=}[ru]   \\
& X \ar@{=}[rr]^-(.3){} & & X \\
V \setminus U \ar[ru]^{j \circ \overline{i}} \ar[rr]^-{j} && X \setminus U \ar[ru]_{\overline{j \circ  i}}
} \]
The top and bottom squares are 2-commutative, whereas the left and right squares are only 
oplax 2-commutative, e.g.\@ there is a 2-morphism making the diagram
\[ \xymatrix{
X \ar[r]^{\overline{j \circ  i}^{\op}} \ar@{=}[d] \ar@{}[rd]|{\Swarrow} & X \setminus U \ar[d]^{\overline{j \circ  i}} \\
X \ar@{=}[r] & X
} \]
commutative, which is given by the morphism of correspondences
\[ \xymatrix{ 
&X \setminus U \ar[dl]_{j \circ  i} \ar[dd]^{j \circ  i} \ar[dr]^{j \circ  i} \\ 
X  && X \\
&X  \ar@{=}[ul]  \ar@{=}[ur] 
} \]
From now on, we forget about the provenance of these squares and just consider a proper derivator six-functor-formalism (more precisely, its oplax left fibered derivator)
\[ \DD \rightarrow \SSS^{\cor,0,\oplax} \]
with a class of distinguised squares  $\Xi \in \SSS^{\cor}(\Box)$ with given morphisms $\xi: \Xi\rightarrow p^*X$ in $\SSS^{\cor, 0, \oplax}(\Box)$.
\end{PAR}

\begin{DEF} \label{PARCOMPLEMENT} 
Let  $\SSS$  be a pre-2-derivator with all 2-morphisms invertible. We call a square $\Xi \in \SSS(\Box)$ {\bf Cartesian}, if the natural functor
\[ \Hom(X, (0,0)^*\Xi) \rightarrow \Hom(p^*X, i_{\righthalfcup}^* \Xi)  \]
is an equivalence of groupoids for all $X \in \SSS(\cdot)$, and {\bf coCartesian} if the natural functor
\[ \Hom((1,1)^*\Xi, X) \rightarrow \Hom(i_{\lefthalfcap}^* \Xi, p^*X)  \]
is an equivalence of groupoids for all $X \in \SSS(\cdot)$.
We call a square $\Xi \in \SSS(\Box)$ {\bf biCartesian} if it is Cartesian and coCartesian. 
\end{DEF}

\begin{BEM}
If $\SSS$ is a usual derivator then this notion coincides with the usual notion \cite{Gro13}.
\end{BEM}

\begin{PAR}
One can show that the squares $\Xi_{U,V,X} \in \SSS^{\cor}(\Box)$ constructed in the last paragraph are actually Cartesian in $\SSS^{\cor}$ 
provided that for all pairs $U, X \setminus U$ of ``open and closed embeddings'' used above we have
 \[ \Hom_{\mathcal{S}}(A, U) = \{ \alpha \in \Hom_{\mathcal{S}}(A, X) \ | \ A \times_{\alpha, X} (X \setminus U) = \emptyset \} \]
and coCartesian provided that we have
 \[ \Hom_{\mathcal{S}}(A, X \setminus U) = \{ \alpha \in \Hom_{\mathcal{S}}(A, X) \ | \ A \times_{\alpha,X} U = \emptyset \} \]
where $\emptyset$ is the initial object.
\end{PAR}

\begin{PAR}
There is a dual variant of the previous construction (not to be confused with the transition to an etale six-functor-formalism).
We consider instead the square $\Xi'_{U,V,X}$ with morphism 
\[ \xymatrix{
& X \ar[dl]_{\overline{j \circ i}^{\op}} \ar@{=}[rr] \ar@{=}[dd] & &X  \ar@{=}[dd] \ar@{=}[dl]  \\
X \setminus U  \ar[dd]_-{i^{\op}} \ar[rr]^-(.3){\overline{j\circ i}} && X \ar[dd]^-(.3){j^{\op}}   \\
& X \ar[dl]^{(  j\circ \overline{i} )^{\op}} \ar@{=}[rr]^-(.3){} & & X \ar[dl]^{j^{\op}} \\
 V \setminus U  \ar[rr]_-{\overline{i}} &&  V 
} \]
In this case the top and bottom squares are only {\em lax} 2-commutative, e.g.\@ there is a 2-morphism making the diagram
\[ \xymatrix{
X \ar@{=}[r] \ar[d]_{\overline{j \circ i}^{\op}} \ar@{}[rd]|{\Nearrow}& X \ar@{=}[d]  \\
X \setminus U \ar[r]^{\overline{j \circ i}}   & X 
} \]
2-commutative. 
This means that for a stable proper derivator six-functor-formalism it is also reasonable to consider a class of distinguished squares with given morphisms $\xi': p^*X \rightarrow \Xi$ in 
$\SSS^{\cor, 0, \lax}(\Box)$.
The morphism $p^*X \rightarrow \Xi'_{U,V,X}$ is just the {\em dual} of the morphism $\Xi_{U,V,X} \rightarrow p^*X$ for the absolute duality on $\Dia^{\cor}(\SSS^{\cor})$ (cf. \ref{DUALITY}).
\end{PAR}

Let $i_\lefthalfcap : \lefthalfcap \hookrightarrow \Box$ and $i_\righthalfcup : \righthalfcup \hookrightarrow \Box$ be the inclusions. 
Analogously to the situation for stable derivators \cite[4.1]{Gro13} we define:

\begin{DEF}
A square $\mathcal{E} \in \DD(\Box)$ over $\Xi \in \SSS(\Box)$ is called  {\bf relatively coCartesian}, if 
for the inclusion  $i_\lefthalfcap: (\lefthalfcap, i_\lefthalfcap^*\Xi) \rightarrow  (\Box, \Xi)$ the unit
$\mathcal{E} \rightarrow i_{\lefthalfcap,*}i_\lefthalfcap^* \mathcal{E}$ is an isomorphism, and it is called  {\bf relatively Cartesian} if
for the inclusion $i_\righthalfcup: (\righthalfcup, i_\righthalfcup^*\Xi) \rightarrow (\Box, \Xi)$ the counit
$i_{\righthalfcup,!}i_\righthalfcup^* \mathcal{E} \rightarrow \mathcal{E}$ is an isomorphism\footnote{The functors $i_{\righthalfcup,!}$ and $i_{\lefthalfcap,*}$  are in both cases considered w.r.t.\@ the base $\Xi$. }. $\mathcal{E}$ is called  {\bf relatively biCartesian} if it is relatively Cartesian and relatively coCartesian. 
\end{DEF}
If $\Xi$ is itself (co)Cartesian in the sense of Definition~\ref{PARCOMPLEMENT} then $\mathcal{E}$ relatively (co)Cartesian implies (co)Cartesian in the sense of Definition~\ref{PARCOMPLEMENT}.

\begin{LEMMA}Assume $V = U$ and
let $\DD(\Box)_{\Xi_{U,U,X}}^{\mathrm{bicart}}$ be the full subcategory of relatively biCartesian squares. Let $(1,0): (\cdot, X) \rightarrow (\Box, \Xi_{U,U,X})$ be the inclusion. 
Then the functor
\[ (1,0)^*: \DD(\Box)_{\Xi_{U,U,X}}^{\mathrm{bicart}} \rightarrow \DD(\cdot)_X \]
and the composition
\[ \xymatrix{  \DD(\cdot)_X \ar[r]^-{1_*} & \DD(\rightarrow)_{U \rightarrow X} \ar[r]^-{0_!} &  \DD(\Box)_{\Xi_{U,U,X}}^{\mathrm{bicart}}  } \]
define an equivalence of categories. 

Also if $V \not= U$ the functor $1_*0_!$ takes values in relatively biCartesian squares.
\end{LEMMA}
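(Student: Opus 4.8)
The plan is to realize the claimed quasi-inverse $0_!1_*$ as a composite of two fully faithful Kan extensions and then to match its essential image with the relatively biCartesian squares. First I would treat the two building blocks separately. The inclusion $1\colon \{\cdot\}\hookrightarrow \Delta_1$ of the object $X$ (the final object of the arrow category) is fully faithful, so the right Kan extension $1_*\colon \DD(\cdot)_X\to\DD(\rightarrow)_{U\to X}$ exists by (FDer3 right) of Theorem~\ref{MAINTHEOREMFIBDER2}, is fully faithful, and satisfies $1^*1_*\cong\id$; by the pointwise formula (FDer4 right) its essential image consists of the coherent arrows whose source at $U$ is $j^!$ applied to the target, where $j^!$ is the right adjoint of the push-forward along the correspondence $U\to X$ (this right adjoint exists by Theorem~\ref{MAINTHEOREM2}). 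Dually, writing $0\colon \Delta_1\hookrightarrow\Box$ for the inclusion of the top edge $(0,0)\to(1,0)$, the left Kan extension $0_!$ exists by (FDer3 left), is fully faithful with $0^*0_!\cong\id$, and has essential image the squares that are left Kan extended from this edge. Composing, $0_!1_*$ is fully faithful, and since $(1,0)$ lies on the top edge and is exactly the object left fixed by $1_*$, one gets $(1,0)^*\circ 0_!1_*\cong\id$.

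Next I would compute $0_!1_*\mathcal{F}$ explicitly and show it is relatively biCartesian. Using the pointwise formulas together with the reductions to fibres as in the proof of Lemma~\ref{LEMMA2} and the final/initial-object computations of Lemma~\ref{LEMMAX} and Corollary~\ref{LEMMAY}, each comma category computing the bottom row carries a terminal (resp.\ initial) object, so the value at $(0,1)$ is the push-forward of the $U$-value to $V\setminus U$ and the value at $(1,1)$ is $\overline{j}^*\mathcal{F}$. In the case $V=U$ the corner $(0,1)$ lies over $\emptyset$, whose fibre is the zero category, so the resulting square is $j^!\mathcal{F}\to\mathcal{F}$, $\,0\to\overline{j}^*\mathcal{F}$ with a zero corner, realizing the localization triangle $j^!\mathcal{F}\to\mathcal{F}\to\overline{j}^*\mathcal{F}$. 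By construction this square satisfies the one-sided Kan-extension condition coming from $0_!$; since the fibres are stable (Definition~\ref{DEFSTABLE}), a square in a fibre is Cartesian if and only if it is coCartesian (cf.\@ \cite{Gro13}), and combined with the fact recorded before the lemma that the base $\Xi_{U,U,X}$ is itself Cartesian in $\SSS^{\cor}$ (so that the relative notions agree with the fibrewise ones via the base-change of Definition~\ref{DEFMULTIBASECHANGE}), the one-sided condition upgrades to relative biCartesianness.

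It then remains to see essential surjectivity onto the biCartesian squares, which together with full faithfulness yields the equivalence. Here I would argue that a relatively biCartesian square over $\Xi_{U,U,X}$ is determined by its top edge: relative Cartesianness recovers the $(0,0)$-corner as a right Kan extension from $\righthalfcup$, and because $(0,1)$ is a zero object this forces the top edge to have the form $j^!\mathcal{F}\to\mathcal{F}$, hence to lie in the image of $1_*$, while relative coCartesianness recovers the whole square as $0_!$ of this edge. Thus $(1,0)^*$ and $0_!1_*$ are mutually inverse equivalences. For the final assertion with $V\neq U$, the same explicit computation of $0_!1_*\mathcal{F}$ goes through verbatim, only now $(0,1)=V\setminus U$ is no longer a zero object; to conclude that the output is still relatively biCartesian I would again invoke that $\Xi_{U,V,X}$ is Cartesian (and, under the complementarity hypotheses of~\ref{PARCOMPLEMENT}, coCartesian) in $\SSS^{\cor}$, so that base change and the pointwise formulas identify the constructed square as both relatively Cartesian and relatively coCartesian --- the difference being that $(1,0)^*$ is no longer essentially surjective, so one obtains only that $0_!1_*$ lands in relatively biCartesian squares.

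The step I expect to be the main obstacle is the upgrade from the one-sided relative Kan-extension condition to relative biCartesianness. One must pin down exactly how relative (co)Cartesianness over a nontrivial base square $\Xi$ reduces to the absolute stable notion in the fibres, using that $\Xi$ is Cartesian in $\SSS^{\cor}$ together with the base-change property of Definition~\ref{DEFMULTIBASECHANGE}; the careful bookkeeping of which push-forward or pull-back functor decorates each edge --- in particular the appearance of $j^!$, which exists only after passing to the derivator level --- is where the argument is most delicate.
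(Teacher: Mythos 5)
A preliminary remark: the paper states this lemma \emph{without} proof, so there is no argument of the paper to compare against; your proposal can only be judged on its own terms. Its skeleton is certainly the intended one: $1_*$ and $0_!$ are fully faithful relative Kan extensions, $(1,0)^*\circ 0_!1_*\cong \id$ is formal, the remaining entries of $0_!1_*\mathcal{F}$ are computed by comma categories with initial/final objects, and essential surjectivity should come from showing that a relatively biCartesian square is $0_!$ of its top edge, the top edge being forced into the image of $1_*$. (One misattribution: $j^!$ exists here because a derivator six-functor-formalism is, by Definition~\ref{DEF6FUDER}, in particular a \emph{right} fibered multiderivator over $\SSS^{\cor}$; Theorem~\ref{MAINTHEOREM2} concerns the construction of such formalisms and is not a hypothesis in Section~\ref{SECTIONLOC}.)

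The genuine gap sits exactly at the step you flag as the main obstacle, and the mechanism you propose for it fails. Stability gives ``Cartesian $\Leftrightarrow$ coCartesian'' only for squares over a \emph{constant} base $p^*S$; over the non-constant base $\Xi_{U,V,X}$ the two relative conditions constrain \emph{opposite} corners by a relative colimit, resp.\@ a relative limit, and are independent of each other. If a one-sided Kan-extension condition upgraded to relative biCartesianness by such general principles, every distinguished square would automatically be localizing, whereas the localization property is introduced immediately after this lemma precisely because it is a nontrivial extra axiom. The two facts you invoke to effect the transfer also carry no force here: Definition~\ref{DEFMULTIBASECHANGE} is a hypothesis on the bifibration $\DD(\cdot)\rightarrow\mathcal{S}^{\op}$ serving as \emph{input} to Theorems~\ref{MAINTHEOREM1} and~\ref{MAINTHEOREM2}, not a property of the abstract formalism $\DD\rightarrow\SSS^{\cor,0,\oplax}$ of this section, and (co)Cartesianness of $\Xi_{U,V,X}$ in $\SSS^{\cor}$ (Definition~\ref{PARCOMPLEMENT}) is a statement about the base 2-pre-multiderivator alone, which imposes nothing on relative Kan extensions in $\DD$. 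The arguments that actually work are different: relative coCartesianness of $0_!1_*\mathcal{F}$ holds because its left edge is coCartesian by construction (the $(0,1)$-entry \emph{is} the push-forward of the $(0,0)$-entry), so the relative pushout from the span collapses onto the push-forward of the $(1,0)$-entry, i.e.\@ onto the $(1,1)$-entry; relative Cartesianness in the case $V=U$ holds because both legs of the relative limit over the cospan that pass through $\emptyset$ vanish --- and this needs the hypothesis, which you use tacitly, that the fiber of $\DD$ over $\emptyset$ is trivial. That hypothesis is genuinely necessary and is not among the stated assumptions: the constant formalism $\DD_S=\mathcal{T}$ (a fixed stable derivator, all push-forwards and pull-backs identities) satisfies everything stated, including the conditions of~\ref{PARCOMPLEMENT} for finite sets, yet for it relatively biCartesian squares over $\Xi_{U,U,X}$ are ordinary biCartesian squares in $\mathcal{T}$, which are not classified by their $(1,0)$-corner. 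Finally, for $V\neq U$, relative Cartesianness of $0_!1_*\mathcal{F}$ unwinds to the assertion that the exchange map $\overline{i}_*\overline{i}^*j^!\mathcal{E}\rightarrow \overline{i}_*k^!\,\overline{j\circ i}^*\mathcal{E}$ (with $k\colon V\setminus U\rightarrow X\setminus U$ the remaining ``open immersion'') is invertible; this is \emph{not} a formal consequence of the six-functor axioms --- in examples such as quasi-coherent sheaves it holds for support/completeness reasons --- so your appeal to base Cartesianness and base change cannot close this part either.
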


Recall that the functor
\[ 0^*: \DD(\Box)_{p^*X}^{\mathrm{bicart},0} \rightarrow \DD(\rightarrow)_{p^*X} \]
is an equivalence, 
where $\DD(\Box)_{p^*X}^{\mathrm{bicart},0}$ is the full subcategory of (relatively) biCartesian objects whose $(1,0)$-entry is zero.
(This is a statement about usual derivators.)

\begin{DEF}
We say that a distinguished square $\Xi$ together with $\xi: \Xi \rightarrow p^*X$ is a {\bf localizing square} if the push-forward $\xi_\bullet$ maps relatively biCartesian squares to relatively biCartesian squares.
We say that a distinguished square $\Xi$ together with $\xi: p^*X \rightarrow \Xi $ is a {\bf colocalizing square} if the pull-back $\xi^\bullet$ maps relatively biCartesian squares to relatively biCartesian squares.
\end{DEF}

If every object in $\DD(\cdot)$ is dualizable w.r.t.\@ the absolute monoidal product in $\Dia^{\cor}(\DD)$ then $\xi: \Xi \rightarrow p^*X$ is localizing if and only if 
 $\xi^\vee:  p^*X \rightarrow \Xi^\vee$ is colocalizing. 

\begin{BEM}
If the proper derivator six-functor-formalism with its oplax extension
\[ \DD \rightarrow \SSS^{\cor, 0, \oplax} \]
has {\em stable} fibers, and the square $\Xi_{U,V,X}$ constructed above is distinguished, then the property of being a localizing square implies that for $\mathcal{E} \in \DD(\cdot)_X$ the triangle
\[ \xymatrix{ j_! j^! \mathcal{E} \ar[r] & (j\circ \overline{i})_! \overline{i}^* j^! \mathcal{E}  \oplus  \mathcal{E} \ar[r] & \overline{j \circ i}_* \overline{j \circ i}^* \mathcal{E} \ar[r]^-{[1]} &   } \]
is distinguished. If $U=V$ this is just the sequence
\[ \xymatrix{ j_! j^! \mathcal{E} \ar[r] &  \mathcal{E} \ar[r] & \overline{j}_* \overline{j}^* \mathcal{E} \ar[r]^-{[1]} &   } \]
\end{BEM}
\begin{BEM}
If the proper derivator six-functor-formalism with its lax extension
\[ \DD \rightarrow \SSS^{\cor, 0, \lax} \]
has {\em stable} fibers, and the square $\Xi_{U,V,X}$ constructed above is distinguished, then the property of being a colocalizing square implies that for $\mathcal{E} \in \DD(\cdot)_X$ the triangle
\[ \xymatrix{ \overline{j \circ i}_! \overline{j \circ i}^! \mathcal{E} \ar[r] & (j \circ \overline{i})_* i^* (\overline{j \circ i})^! \mathcal{E}  \oplus  \mathcal{E} \ar[r] & j_* j^* \mathcal{E} \ar[r]^-{[1]} &   } \]
is distinguished. If $U=V$ this is just the sequence:
\[ \xymatrix{ \overline{j}_! \overline{j}^! \mathcal{E} \ar[r] &  \mathcal{E} \ar[r] & j_* j^* \mathcal{E} \ar[r]^-{[1]} &   } \]
\end{BEM}

\begin{DEF}
We say that the proper derivator six-functor-formalism with its extension as oplax left fibered derivator 
\[ \DD \rightarrow \SSS^{\cor, 0, \oplax} \]
satisfies {\bf the localization property} w.r.t.\@ a class of
distinguished squares 
$\xi: \Xi \rightarrow p^*X$
if these are localizing squares. 

We say that the proper derivator six-functor-formalism with its extension as lax right fibered derivator 
\[ \DD \rightarrow \SSS^{\cor, 0, \lax} \]
satisfies {\bf the colocalization property} w.r.t.\@ a class of
distinguished squares 
$\xi: p^*X \rightarrow \Xi$
if these are colocalizing squares. 
\end{DEF}

There is an analogous notion in which an {\em etale} derivator-six-functor-formalism w.r.t.\@ a class of ``etale morphisms'' $\mathcal{S}_0$ in $\mathcal{S}$ satisfies the (co)localization property

\begin{PAR}
Consider again the situation in \ref{PAROPENCLOSED}. 
More generally we may consider a sequence
\[ \xymatrix{ X_1 \ar@{^{(}->}[r] & X_2 \ar@{^{(}->}[r] & \cdots \ar@{^{(}->}[r] & X_n  } \]
of open embeddings. They lead to a diagram $\Xi$
\[ \xymatrix{ 
X_1 \ar[r] \ar[d] & X_2 \ar[r] \ar[d]  & \cdots \ar[r]   & X_n   \ar[d] \\
\emptyset \ar[r]  & X_2 \setminus X_1 \ar[r] \ar[d]  & \cdots \ar[r]   & X_n \setminus X_1  \ar[d]   \\
& \emptyset  \ar[r] & \ddots  \ar[d] & \vdots   \ar[d]   \\
&& \emptyset  \ar[r] & X_n \setminus X_{n-1} \\
} \]
in which all squares are biCartesian in $\SSS^{\cor}$. 
Starting from an object $\mathcal{E} \in \DD(\cdot)_{X_n}$ we may form again
\[ 0_* (n)_! \mathcal{E} \]
 where $(n): \cdot \rightarrow [n]$ is the inclusion of the last object and 
$0:   [n] \rightarrow \Xi$ is the inclusion of the first line. It is easy to see that in the object 
$0_* (n)_! \mathcal{E}$ all squares are biCartesian. There is furthermore again a morphism $\xi: \Xi \rightarrow p^*{X_n}$ in $\SSS^{\cor,0,\oplax}$ 
such that all squares in $\xi_\bullet 0_* (n)_! \mathcal{E}$ are biCartesian with zero's along the diagonal. This category is
equivalent to $\DD([n])_{p^*X_n}$ by the embedding of the first line. It can be seen as a category of $n$-angels in the stable derivator
$\DD_{X_n}$ (the fiber of $\DD$ over $X$).

Hence for an oplax derivator six-functor-formalism with localization property, and 
for any filtration of a space $X$ by $n$ open subspaces, and for any object $\mathcal{E} \in \DD(\cdot)_X$ we get a corresponding $(n+1)$-angle in the derivator $\DD_X$ in the sense of \cite[\S 13]{GS14b}.
\end{PAR}

\appendix

\section{Representable pre-2-multiderivators}\label{APPENDIX}

Let $p: \mathcal{D} \rightarrow \mathcal{S}$ be a strict functor of 2-(multi)categories. 
Recall the definition of their associated represented pre-2-(multi)derivators $\DD, \DD^{\lax}, \DD^{\oplax}$ etc.\@ from \ref{PARREPR2PREMULTIDER}.
In this appendix we investigate to which extend 2-categorical fibration properties for $p$ are inherited by the morphism between their associated pre-2-(multi)derivators.

\begin{PROP}\label{PROPREPR}
\begin{enumerate}
\item If $\mathcal{D} \rightarrow \mathcal{S}$ is a 1-fibration (resp.\@ 1-opfibration, resp.\@ 2-fibration, resp.\@ 2-opfibration) of
2-categories then 
$\DD(I) \rightarrow \SSS(I)$ is a 1-fibration (resp.\@ 1-opfibration, resp.\@ 2-fibration, resp.\@ 2-opfibration) of 2-categories.

\item If $\mathcal{D} \rightarrow \mathcal{S}$ is a 1-fibration and 2-opfibration of 2-categories then
$\DD^{\mathrm{lax}}(I) \rightarrow \SSS^{\mathrm{lax}}(I)$ is a 1-fibration and 2-opfibration of 2-categories. 
 If $\mathcal{D} \rightarrow \mathcal{S}$ is a 1-opfibration and 2-fibration of 2-multicategories then
$\DD^{\mathrm{lax}}(I) \rightarrow \SSS^{\mathrm{lax}}(I)$ is a 1-opfibration and 2-fibration of 2-multicategories.  

If $\mathcal{D} \rightarrow \mathcal{S}$ is a 1-fibration and 2-fibration of 2-categories then
$\DD^{\mathrm{oplax}}(I) \rightarrow \SSS^{\mathrm{oplax}}(I)$ is a 1-fibration and 2-fibration of 2-categories.  
If $\mathcal{D} \rightarrow \mathcal{S}$ is a 1-opfibration and 2-opfibration of 2-multicategories then
$\DD^{\mathrm{oplax}}(I) \rightarrow \SSS^{\mathrm{oplax}}(I)$ is a 1-opfibration and 2-opfibration of 2-multicategories.

\item If $\mathcal{D} \rightarrow \mathcal{S}$ is a 1-bifibration and 2-isofibration of
2-multicategories with {\em complete} 1-categorical fibers then 
$\DD(I) \rightarrow \SSS(I)$ is a 1-bifibration and 2-isofibration of
2-multicategories.
\end{enumerate}
\end{PROP}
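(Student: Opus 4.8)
The plan is to prove all three parts by a single \emph{pointwise} principle: since $\DD(I)$, $\DD^{\lax}(I)$, $\DD^{\oplax}(I)$ consist of pseudo-functors $I \to \mathcal{D}$, (pseudo/lax/oplax) natural transformations, and modifications, and the projection to $\SSS(I)$ is induced by postcomposition with $p$, every (co)cartesian lift can be built value-by-value over the objects of $I$, with all coherence data supplied by the universal properties of the (co)cartesian cells in $\mathcal{D}$. First I would fix notation: write $\tilde F, \tilde G, \dots$ for functors $I \to \mathcal{D}$ and $F = p\tilde F$, etc., for their images in $\SSS(I)$.

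For part 1, consider a 1-morphism $\phi\colon S \Rightarrow T$ of $\SSS(I)$ (a pseudo-natural transformation) and a lift $\tilde T$ of $T$. For each $i \in I$ the 1-fibration $p$ supplies a cartesian lift $c_i\colon \phi_i^\ast \tilde T(i) \to \tilde T(i)$ over $\phi_i$; set $\tilde S(i) := \phi_i^\ast \tilde T(i)$. For an arrow $a\colon i \to j$ of $I$ I would define $\tilde S(a)$ as the unique morphism over $S(a)$ through which $\tilde T(a)\circ c_i$, transported by the invertible structure cell of $\phi$, factors across the cartesian $c_j$; uniqueness in the cartesian universal property then yields the pseudo-functoriality constraints of $\tilde S$ and the pseudo-naturality of the resulting $\tilde\phi$. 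The cartesianness of $\tilde\phi$ in $\DD(I)$ is checked the same way: a competing 1-morphism together with a 2-morphism downstairs produces, pointwise and by the cartesian property in $\mathcal{D}$, a unique modification upstairs. The 1-opfibration case is dual, and the 2-(op)fibration cases are the degenerate version of the argument in which one lifts only the 2-cells (modifications), value-by-value.

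For part 2 the same construction applies, with one extra ingredient. An (op)lax natural transformation carries, besides the components $\phi_i$, a non-invertible structure 2-cell $\phi_a$ for each arrow $a$ of $I$, and in the assembly above one must now \emph{lift} these 2-cells rather than merely transport invertible ones. The direction of $\phi_a$ is opposite for lax versus oplax, and whether that direction can be lifted against a given 1-(op)cartesian component is exactly what forces the pairing in the hypotheses: a 1-fibration must be combined with a 2-opfibration (resp.\ a 1-opfibration with a 2-fibration) in the lax case, and with a 2-fibration (resp.\ 2-opfibration) in the oplax case. Granting the matching (op)fibration property, the lifted cells $\tilde\phi_a$ exist and are unique over $\phi_a$, the coherence axioms of an (op)lax transformation follow from this uniqueness, and the cartesian/cocartesian property is verified pointwise as before.

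Part 3 is the most delicate, because $\mathcal{D} \to \mathcal{S}$ is now a 2-\emph{multi}category fibration and only a 2-\emph{iso}fibration. The pointwise recipe still produces cartesian and cocartesian 1-lifts of (multi)morphisms, and 2-isomorphisms lift objectwise, so the scheme of parts 1--2 is the skeleton. The obstacle I expect to be central is that a 2-isofibration transports only \emph{invertible} 2-cells, so the factorizations needed to turn the value-by-value lifts into a genuine pseudo-functor — and to check the multicategorical universal properties — are no longer handed to us directly; this is exactly where completeness of the 1-categorical fibers is used, to produce the missing factorizations as limits computed in the fibers. The remaining work is then bookkeeping: verifying that the constructed lifts respect the multicomposition and the symmetric/braided action, and that $\DD(I) \to \SSS(I)$ is again a 2-isofibration, i.e.\ that invertible modifications lift — which is immediate pointwise.
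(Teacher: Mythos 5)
Parts 1 and 2 of your proposal follow essentially the same route as the paper's proof: choose (co)Cartesian lifts $\xi(i)$ pointwise over the components $f(i)$, extract the functorial structure $\mathcal{G}(\alpha)$ of the lifted diagram and its coherence from the universal property of Cartesian 1-morphisms (in the paper this is phrased via the 2-Cartesian square of Hom-categories), and, in the lax/oplax case, lift the non-invertible structure cells $f_\alpha$ using the assumed 2-(op)fibration (in the paper this is packaged as an adjunction $\rho$ between a Hom-category and a comma category, built from (co)Cartesian 2-cells). One caveat: factorization through a Cartesian 1-morphism of a 2-category fibration is unique only up to a \emph{unique invertible 2-cell}, not unique on the nose; these invertible 2-cells are exactly what become the pseudo-functoriality constraints of the lift, so your repeated appeals to ``uniqueness'' must be read 2-categorically. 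This is a presentational slip rather than a mathematical one, and the paper's proof shows how to handle it.

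Part 3, however, contains a genuine gap. You claim that ``the pointwise recipe still produces cartesian and cocartesian 1-lifts of (multi)morphisms'' and that completeness of the fibers is needed only to repair factorizations caused by the weakness of the 2-isofibration hypothesis. Both claims are incorrect. For \emph{multi}categories the coCartesian (opfibration) direction is indeed pointwise, but Cartesian lifts with respect to a slot --- i.e.\ internal Homs --- are \emph{not} computed pointwise: the value of the Cartesian lift at an object $i \in I$ is a limit over the under-category of $i$, not the pointwise Cartesian lift. Concretely, take $I = \Delta_1$ and let $\mathcal{D} \rightarrow \{\cdot\}$ be the 2-multicategory of a complete closed monoidal category $(\mathcal{C}, \otimes, \underline{\Hom})$; for objects $G = (G_0 \rightarrow G_1)$ and $H = (H_0 \rightarrow H_1)$ of $\Fun(\Delta_1, \mathcal{C})$ one has $\mathcal{HOM}(G,H)_1 = \underline{\Hom}(G_1,H_1)$ but
\[ \mathcal{HOM}(G,H)_0 \;=\; \underline{\Hom}(G_0,H_0) \times_{\underline{\Hom}(G_0,H_1)} \underline{\Hom}(G_1,H_1), \]
a genuine pullback, not the pointwise value $\underline{\Hom}(G_0,H_0)$. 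This limit construction is precisely why completeness of the 1-categorical fibers is assumed, and it is the point the paper emphasizes in its proof sketch (``the internal Hom cannot be computed point-wise but involves a limit construction'', with the actual construction deferred to \cite[Proposition 4.1.6]{Hor15}). So the skeleton of your parts 1--2 does not carry over to the fibration direction of part 3; what is needed there is a construction of the Cartesian lift as a fiberwise limit over comma categories, glued via base change, and your proposal does not supply it.
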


\begin{proof}[Proof (sketch).]
We show exemplarily 1.\@ for 1-fibrations of 2-categories and 2.\@ for 1-fibrations of 2-categories. The same proof works for 1-opfibrations (even of 2-multicategories). 
If we have a 1-bifibration of 2-multicategories {\em with 1-categorical fibers} then a slight extension of the proof of \cite[Proposition 4.1.6]{Hor15} shows 3.
For 1-opfibrations and 2-fibrations of 2-multicategories with 1-categorical fibers this is actually easier to prove using the encoding by a pseudo-functor as follows:
The 1-opfibration $\mathcal{D} \rightarrow \mathcal{S}$ with 1-categorical fibers is encoded in a pseudo-functor 
\[ \mathcal{S} \rightarrow \mathcal{MCAT}\]
The category $\DD(I) \rightarrow \SSS(I)$
is encoded in the pseudo-functor
\[ \SSS(I) \rightarrow \mathcal{MCAT}\]
which maps a pseudo-functor $F: I \rightarrow \mathcal{S}$ to the multicategory of natural transformations and modifications
\[ \Hom_{\Fun(I, \mathcal{MCAT})}( \cdot, F),  \]
where $\cdot$ is the constant functor with value the 1-point category. 

This construction may be adapted to 2-categorical fibers by using ``pseudo-functors'' of 3-categories. 

The problem with {\em 1-fibrations} of (1- or 2-){\em multi}\,categories comes from the fact that the internal Hom cannot be computed point-wise but involves a
limit construction (cf.\@ \cite[Proposition 4.1.6]{Hor15}). The difference between external and internal monoidal product in $\Dia^{\cor}(\DD)$ gives a
theoretical explanation of this phenomenon (cf.\@ \cite[Example~7.5]{Hor15b}). 

{\em 1-fibration of 2-categories $\mathcal{D} \rightarrow \mathcal{S}$ implies 1-fibration of 2-categories $\DD(I) \rightarrow \SSS(I)$: }

Let $I$ be a diagram in $\Dia$.
Let $Y, Z: I \rightarrow \mathcal{S}$ be pseudo-functors, $f: Y \Rightarrow Z$ be a pseudo-natural transformation and 
$\mathcal{E}: I \rightarrow \mathcal{D}$ be a pseudo-functor over $Z$. 
For each morphism $\alpha: i \rightarrow i'$ we are given a 2-commutative diagram
\[ \xymatrix{
Y(i) \ar[r]^{f(i)} \ar[d]_{Y(\alpha)} \ar@{}[rd]|{\Nearrow^{f_\alpha}} & Z(i) \ar[d]^{Z(\alpha)} \\
Y(i') \ar[r]^{f(i')} & Z(i')
} \]
Since $f$ is assumed to be a pseudo-functor, the morphism $f_\alpha$ is invertible. We will construct a pseudo-functor $\mathcal{G}: I \rightarrow \mathcal{D}$ over $Y$ and a 1-coCartesian morphism 
$\xi: \mathcal{G} \rightarrow \mathcal{E}$ over $f$. 
For each $i$, we choose a 1-coCartesian morphism
\[ \xi(i): \mathcal{G}(i) \rightarrow \mathcal{E}(i) \]
over $f(i): Y(i) \rightarrow Z(i)$. For each $\alpha$, we look at the 2-Cartesian diagram
\[ \xymatrix{
\Hom_{\mathcal{D}}(\mathcal{G}(i), \mathcal{G}(i')) \ar[r]^{\xi(i') \circ} \ar[d] & \Hom_{\mathcal{D}}(\mathcal{G}(i), \mathcal{E}(i')) \ar[d] \\
\Hom_{\mathcal{S}}(Y(i), Y(i')) \ar[r]^{f(i') \circ} & \Hom_{\mathcal{S}}(Y(i), Z(i'))
} \]
The triple $(\mathcal{E}_\alpha \circ \xi(i), f_\alpha, Y_\alpha)$ is an object in the category 

\[ \Hom_{\mathcal{D}}(\mathcal{G}(i), \mathcal{E}(i')) \times_{/ \Hom_{\mathcal{S}}(Y(i), Z(i'))}^\sim \Hom_{\mathcal{S}}(Y(i), Y(i')) \]
Define $\mathcal{G}(\alpha)$ to be an object in $\Hom_{\mathcal{D}}(\mathcal{G}(i), \mathcal{G}(i'))$ such that there exists a 2-isomorphism
\[ \Xi_\alpha: (\xi(i') \circ \mathcal{G}_\alpha, \id, p(\mathcal{G}_\alpha)) \Rightarrow (\mathcal{E}_\alpha \circ \xi(i), f_\alpha^{-1}, Y_\alpha).  \]
Such an object exists because the above square is 2-Cartesian.

We get a 2-commutative square
\[ \xymatrix{
\mathcal{G}(i) \ar[r]^{\xi(i)} \ar[d]_{\mathcal{G}_\alpha} \ar@{}[rd]|{\Nearrow^{\xi_\alpha}} & \mathcal{E}(i) \ar[d]^{\mathcal{E}_\alpha} \\
\mathcal{G}(i') \ar[r]^{\xi(i')} & \mathcal{E}(i') 
} \]
Here $\xi_\alpha$ is the first component of  $\Xi_\alpha$. 

This defines a pseudo-functor $\mathcal{G}: I \rightarrow \mathcal{D}$ as follows.
Let $\alpha: i \rightarrow i'$ and $\beta: i' \rightarrow i''$ be two morphisms in $I$. 
We need to define a 2-isomorphism
$G_{\beta \alpha} \Rightarrow G_{\beta} \circ G_{\alpha}$. 
It suffices to define the 2-isomorphism after applying the embedding
\[  \Hom_{\mathcal{D}}(\mathcal{G}(i), \mathcal{G}(i'')) \hookrightarrow 
\Hom_{\mathcal{D}}(\mathcal{G}(i), \mathcal{E}(i'')) \times_{/ \Hom_{\mathcal{S}}(Y(i), Z(i''))}^\sim \Hom_{\mathcal{S}}(Y(i), Y(i''))  \]
which maps $G_{\beta} \circ G_{\alpha}$ to

\[ (\xi(i'') \circ \mathcal{G}_\beta \circ \mathcal{G}_\alpha,  \id, p(\mathcal{G}_\beta) \circ p(\mathcal{G}_\alpha)) \]
and  $G_{\beta \alpha}$ to
\[ (\xi(i'') \circ \mathcal{G}_{\beta \alpha},  \id, p(\mathcal{G}_{\beta \alpha})). \] 

We have the chains of 2-isomorphisms
\[ \xymatrix{ 
\xi(i'') \circ \mathcal{G}_\beta \circ \mathcal{G}_\alpha \ar[d]^{\xi_\beta \ast \mathcal{G}_\alpha} \\
 \mathcal{E}_\beta \circ \xi(i') \circ \mathcal{G}_\alpha \ar[d]^{\mathcal{E}_\beta \ast \xi_\alpha} \\
 \mathcal{E}_\beta \circ  \mathcal{E}_\alpha \circ \xi(i) \ar[d] \\
 \mathcal{E}_{\beta \alpha} \circ \xi(i) \\
\xi(i'') \circ \mathcal{G}_{\beta \alpha} \ar[u]
} \qquad  \xymatrix{ 
p(\mathcal{G}_\beta) \circ p(\mathcal{G}_\alpha) \ar[d]^{\Xi_{\beta,2} \ast p(\mathcal{G}_\alpha)} \\
Y_\beta \circ p(\mathcal{G}_\alpha) \ar[d]^{Y_\beta \ast \Xi_{\alpha,2}} \\
Y_\beta \circ Y_\alpha \ar[d] \\
Y_{\beta \alpha} \\
p(\mathcal{G}_{\beta \alpha})  \ar[u] 
} \]

Applying $p$ to the first chain and $f(i'')\circ$ to the second chain, we get the commutative diagram
\[ \xymatrix{
& f(i'') \circ p(\mathcal{G}_\beta) \circ p(\mathcal{G}_\alpha) \ar@{=}[r] \ar[d] & f(i'') \circ p(\mathcal{G}_\beta) \circ p(\mathcal{G}_\alpha) \ar[d] \\
 & \ar[ld] \ar[d] Z_\beta \circ  f(i') \circ p(\mathcal{G}_\alpha) \ar[r] & f(i'') \circ Y_\beta \circ p(\mathcal{G}_\alpha) \ar[d] \\
 Z_\beta \circ  Z_\alpha \circ f(i) \ar[r] \ar[d] &  Z_\beta \circ  f(i') \circ Y_\alpha \ar[r]  & f(i'') \circ Y_\beta \circ Y_\alpha  \ar[d] \\
Z_{\beta \alpha} \circ f(i)  \ar[rr]^{f_{\beta\alpha}} && f(i'') \circ Y_{\beta \alpha} \\
f(i'') \circ p(\mathcal{G}_{\beta \alpha}) \ar@{=}[rr] \ar[u] &&  f(i'') \circ p(\mathcal{G}_{\beta \alpha}) \ar[u] \\
} \]
hence a valid isomorphism in $\Hom_{\mathcal{D}}(\mathcal{G}(i), \mathcal{E}(i'')) \times_{/ \Hom_{\mathcal{S}}(Y(i), Z(i''))}^\sim \Hom_{\mathcal{S}}(Y(i), Y(i''))$. One checks that this satisfies the axioms of a pseudo-functor \cite[Definition~1.3]{Hor15b} and that $\xi$ is indeed a pseudo-natural transformation (\cite[Definition~1.4]{Hor15b}). 
 
Now assume that we have a lax natural transfomation, i.e.\@ the $f_\alpha$ go into the opposite direction and are no longer invertible. 
We assume that we have a 2-fibration as well. Then the diagram
\[ \xymatrix{
\Hom_{\mathcal{D}}(\mathcal{G}(i), \mathcal{G}(i')) \ar[r]^{\xi(i') \circ} \ar[d] & \Hom_{\mathcal{D}}(\mathcal{G}(i), \mathcal{E}(i')) \ar[d] \\
\Hom_{\mathcal{S}}(Y(i), Y(i')) \ar[r]^{f(i') \circ} & \Hom_{\mathcal{S}}(Y(i), Z(i'))
} \]
is Cartesian as well. Moreover we have an adjunction with the full comma category
\[ \xymatrix{ \Hom_{\mathcal{D}}(\mathcal{G}(i), \mathcal{G}(i')) \ar@/_3pt/[rrr]_-{\mathrm{can}}  &&&  \ar@/_3pt/[lll]_-{\rho_{\xi(i'), \mathcal{G}(i)}} \Hom_{\mathcal{D}}(\mathcal{G}(i), \mathcal{E}(i')) \times_{/ \Hom_{\mathcal{S}}(Y(i), Z(i'))} \Hom_{\mathcal{S}}(Y(i), Y(i')) } \]
with $\rho \circ \mathrm{can} = \id$, in particular with the morphism `$\mathrm{can}$' fully faithful. See below for the precise definition of $\rho$. 
Hence we define
\[ \mathcal{G}(\alpha) := \rho_{\xi(i'), \mathcal{G}(i)} (\mathcal{E}_\alpha \circ \xi(i), f_\alpha, Y_\alpha) \]
and get at least a morphism (coming from the unit of the adjunction): 
\[ \Xi_\alpha: (\xi(i') \circ \mathcal{G}(\alpha), \id, p(\mathcal{G}(\alpha))) \Rightarrow (\mathcal{E}_\alpha \circ \xi(i), f_\alpha, Y_\alpha).  \]
The first component of $\Xi_\alpha$ this time (potentially) define a lax-natural transformation $\xi: \mathcal{G} \rightarrow \mathcal{E}$ only. 
To turn $\mathcal{G}$ into a pseudo-functor, we have to see that $\rho$ is functorial.

For a Cartesian arrow $\xi: \mathcal{E} \rightarrow \mathcal{F}$, we define 
\[ \rho_{\xi, \mathcal{G}}:  \Hom_{\mathcal{D}}(\mathcal{G}, \mathcal{F}) \times_{/\Hom_{\mathcal{S}}(U, T)} \Hom_{\mathcal{S}}(U,S) \rightarrow \Hom_{\mathcal{D}}(\mathcal{G}, \mathcal{E}) \] 
 as follows:
Let $(\tau, \mu, g)$ be a tuple with $g \in \Hom_{\mathcal{S}}(U,S)$, $\tau \in  \Hom_{\mathcal{D}}(\mathcal{G}, \mathcal{F})$ and
\[ \mu: f \circ g \Rightarrow p(\tau) \]
a 2-morphism. We may choose a coCartesian 2-morphism
\[ \widetilde{\mu}: X \Rightarrow \tau \]
above $\mu$. We set $\rho_\xi(\tau, \mu, g)$ equal to an object with an isomorphism
\[  (\xi \circ \rho_\xi(\tau, \mu, g), \id, p(\rho_\xi(\tau, \mu, g))) \iso (X, \id, g).  \]
Together with the morphism
\[ \widetilde{\mu}:  (X, \id, g) \longrightarrow (\tau, \mu, g)  \]
we get the counit
\[ \mathrm{can} \circ \rho_\xi \Rightarrow \id.  \]

We need to define a 2-isomorphism
$G_{\beta \alpha} \Rightarrow G_{\beta} \circ G_\alpha$. 
i.e.
\[ \rho_{\xi(i''), \mathcal{G}(i)} (\mathcal{E}_{\beta \alpha} \circ \xi(i), f_{\beta \alpha}, Y_{\beta \alpha}) \rightarrow  \]
\[ \rho_{\xi(i''), \mathcal{G}(i')} (\mathcal{E}_{\beta} \circ \xi(i'), f_\beta, Y_\beta) \circ \rho_{\xi(i'), \mathcal{G}(i)} (\mathcal{E}_\alpha \circ \xi(i), f_\alpha, Y_\alpha)\]

First of all, we get three Cartesian 2-morphisms 
\[ \begin{array}{rllll}
 \widetilde{f_{\beta\alpha}}:&  X_{\beta \alpha} &\Rightarrow& \mathcal{E}_{\beta \alpha} \circ \xi(i) & \text{ over } f_{\beta\alpha} \\
 \widetilde{f_{\alpha}}: & X_{\alpha} &\Rightarrow& \mathcal{E}_{\alpha} \circ \xi(i) & \text{ over } f_\alpha \\
 \widetilde{f_{\beta}}: & X_{\beta} &\Rightarrow& \mathcal{E}_{\beta} \circ \xi(i') &  \text{ over } f_\beta 
\end{array} \]
and have to define an isomorphism (after applying can) 
\[  (X_{\beta \alpha}, \id, Y(\beta \alpha)) \iso   (X_\beta, \id, Y(\beta)) \circ (X_\alpha, \id, Y(\alpha)).\]

We have the diagram
\begin{equation}\label{eqpast1} \vcenter{ \xymatrix{ 
\mathcal{G}(i) \ar[rr]^{f(i)} \ar[rdrd]^{X_\alpha} \ar[dd]_{\mathcal{G}(\alpha)} && \mathcal{E}(i) \ar[dd]^{\mathcal{E}(\alpha)} \ar@/^35pt/[dddd]^{\mathcal{E}(\beta\alpha)}_{\Nearrow^\sim} \\
&\ar@{}[ru]|{\Nearrow^{\mu_\alpha}} & \\
\mathcal{G}(i') \ar[rr]^{f(i')} \ar[rdrd]^{X_\beta} \ar[dd]_{\mathcal{G}(\beta)} \ar@{}[ur]|{\Nearrow^{\sim}} && \mathcal{E}(i') \ar[dd]^{\mathcal{E}(\beta)} \\
&\ar@{}[ru]|{\Nearrow^{\mu_\beta}} & \\
\mathcal{G}(i'') \ar[rr]^{f(i'')} \ar@{}[ur]|{\Nearrow^{\sim}} && \mathcal{E}(i'')
}} \end{equation}
and the diagram
\begin{equation}\label{eqpast2} \vcenter{  \xymatrix{
\mathcal{G}(i) \ar[rr]^{f(i)} \ar[rdrd]^{X_{\beta\alpha}}  \ar[dd]_{\mathcal{G}(\beta \alpha)} && \mathcal{E}(i) \ar[dd]^{\mathcal{E}(\beta \alpha)} \\
&\ar@{}[ru]|{\Nearrow^{\mu_{\beta\alpha}}} & \\
\mathcal{G}(i'') \ar[rr]^{f(i'')} \ar@{}[ur]|{\Nearrow^{\sim}} && \mathcal{E}(i'')
}} \end{equation}
The two pastings are both Cartesian (using Lemma~\ref{LEMMAWHISKERINGCART} below) over the pastings in the diagram
\[
\vcenter{ \xymatrix{ 
Y(i) \ar[r]^{f(i)}  \ar[d]_{Y(\alpha)} & Z(i) \ar[d]^{Z(\alpha)} \ar@/^45pt/[dd]^{Z(\beta\alpha)}_{\Nearrow^{Z_{\beta,\alpha}}} \\
Y(i') \ar[r]^{f(i')} \ar[d]_{Y(\beta)} \ar@{}[ur]|{\Nearrow^{f_\alpha}} & Z(i') \ar[d]^{Z(\beta)} \\
Y(i'') \ar[r]^{f(i'')} \ar@{}[ur]|{\Nearrow^{f_\beta}} & Z(i'')
} } \quad \text{resp.} \quad  \vcenter{ \xymatrix{
Y(i) \ar[r]^{f(i)} \ar@/_45pt/[dd]_{Y(\beta\alpha)}^{\Nearrow^{Y_{\beta, \alpha}}} \ar[d]_{Y(\alpha)} & Z(i) \ar[d]^{Z(\alpha)} \ar@/^45pt/[dd]^{Z(\beta\alpha)}_{\Nearrow^{Z_{\beta,\alpha}}} \\
Y(i') \ar[r]^{f(i')} \ar[d]_{Y(\beta)} \ar@{}[ur]|{\Nearrow^{f_\alpha}} & Z(i') \ar[d]^{Z(\beta)} \\
Y(i'') \ar[r]^{f(i'')} \ar@{}[ur]|{\Nearrow^{f_\beta}} & Z(i'')
} } \]
The pasting in the second diagram is just $f_{\beta,\alpha}$ by definition of lax natural transformation for $f$. 
This yields an isomorphism between the pastings in diagram (\ref{eqpast1}) and (\ref{eqpast2})  {\em over $Y_{\beta, \alpha}$} which we define to be $\mathcal{G}_{\beta, \alpha}$.
One checks that this defines indeed a pseudo-functor $\mathcal{G}$ such that $\xi: \mathcal{G} \rightarrow \mathcal{E}$ is a lax natural transformation which is 1-Cartesian.
\end{proof}

\begin{LEMMA}\label{LEMMAWHISKERINGCART}
Let $\mathcal{D} \rightarrow \mathcal{S}$ be a 2-(op)fibration of 2-categories. 
Let $\mu: \alpha \Rightarrow \beta$ be a 2-(co)Cartesian morphism, where $\alpha, \beta: \mathcal{E} \rightarrow \mathcal{F}$ are 1-morphisms.
If $\gamma: \mathcal{F} \rightarrow \mathcal{G}$ is a 1-morphism then $\gamma \ast \mu$ is 2-(co)Cartesian. 
Similarly, if $\gamma': \mathcal{G} \rightarrow \mathcal{E}$ is a 1-morphism then $\mu \ast \gamma'$ is 2-(co)Cartesian. 
\end{LEMMA}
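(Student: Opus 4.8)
The plan is to recognize whiskering as the special case of horizontal composition of $2$-cells in which one factor is an identity, and to reduce the assertion to the compatibility of horizontal composition with the $(\mathrm{op})$fibration structure on the Hom-categories.

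Recall that for a $2$-fibration (resp.\@ $2$-opfibration) $p: \mathcal{D} \to \mathcal{S}$ and for each pair of objects the induced functor
\[ p_{\mathcal{E}, \mathcal{F}}: \Hom_{\mathcal{D}}(\mathcal{E}, \mathcal{F}) \to \Hom_{\mathcal{S}}(p\mathcal{E}, p\mathcal{F}) \]
is a fibration (resp.\@ opfibration), and a $2$-morphism $\mu: \alpha \Rightarrow \beta$ is $2$-Cartesian (resp.\@ $2$-coCartesian) exactly when it is Cartesian (resp.\@ coCartesian) for this fibration (resp.\@ opfibration). I will treat the $2$-fibration/Cartesian case; the $2$-opfibration/coCartesian case is formally dual, obtained by reversing the direction of all $2$-cells.

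First I would note the identities
\[ \gamma \ast \mu = \id_\gamma \ast \mu, \qquad \mu \ast \gamma' = \mu \ast \id_{\gamma'}, \]
so that each whiskering is a horizontal composite in which one of the two factors is an identity $2$-cell. Since an identity morphism in any (op)fibration is both Cartesian and coCartesian, the $2$-cells $\id_\gamma$ and $\id_{\gamma'}$ are simultaneously $2$-Cartesian and $2$-coCartesian (they lie over $\id_{p\gamma}$, resp.\@ $\id_{p\gamma'}$). Hence both whiskerings are horizontal composites of two $2$-Cartesian morphisms, and I would conclude by the basic property of a $2$-(op)fibration that the horizontal composition functors
\[ \Hom_{\mathcal{D}}(\mathcal{F}, \mathcal{G}) \times \Hom_{\mathcal{D}}(\mathcal{E}, \mathcal{F}) \to \Hom_{\mathcal{D}}(\mathcal{E}, \mathcal{G}) \]
are morphisms of fibrations (resp.\@ opfibrations) over the corresponding composition in $\mathcal{S}$, and in particular carry a pair of $2$-Cartesian (resp.\@ $2$-coCartesian) $2$-cells to a $2$-Cartesian (resp.\@ $2$-coCartesian) $2$-cell. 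Applying this to the pairs $(\id_\gamma, \mu)$ and $(\mu, \id_{\gamma'})$ gives the two assertions.

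The only real issue is which formulation of $2$-(op)fibration is taken as primitive. If the compatibility of horizontal composition with $2$-(co)Cartesianness is built into the definition used in \cite{Hor15b}, the argument above is immediate. If instead one starts from the weaker datum that each $p_{\mathcal{E}, \mathcal{F}}$ is a fibration, then this compatibility has to be checked by hand from the $2$-dimensional universal property of Cartesian $2$-cells; here the subtle point -- and the step I expect to require the most care -- is that a test $1$-morphism $\delta: \mathcal{E} \to \mathcal{G}$ need not factor through $\gamma$, so the existence and uniqueness of the comparison lift cannot be read off fibrewise but must be transported along the $1$-Cartesian structure of $p$, using that the chosen cleavages of the hom-fibrations are compatible with whiskering up to the coherence isomorphisms of the pseudo-functor encoding $p$.
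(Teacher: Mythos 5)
Your proof is correct and follows essentially the same route as the paper: the paper's one-line argument invokes precisely the axiom that horizontal composition is a morphism of (op)fibrations, which, applied to the pairs $(\id_\gamma,\mu)$ and $(\mu,\id_{\gamma'})$, gives the claim since identity 2-cells are 2-(co)Cartesian. Your closing caveat resolves in the favorable direction, as this compatibility is indeed part of the definition of 2-(op)fibration used in \cite{Hor15b}, so the fallback argument is not needed.
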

\begin{proof}
This follows immediately from the axiom that composition is a morphism of (op-)fibrations. 
\end{proof}

\newpage
\bibliographystyle{abbrvnat}
\bibliography{6fu}

\end{document}